 \newtheorem{theorem}{Theorem}
 \newtheorem{remark}[theorem]{Remark}
 \newtheorem{definition}[theorem]{Definition}
 \newtheorem{proposition}[theorem]{Proposition}
 \newtheorem{corollary}[theorem]{Corollary}
 \newtheorem{example}{Example}
\newcommand{\abs}[1]{\left\lvert#1\right\rvert}
\journal{--}
\begin{document}

\onehalfspacing

\newif\ifarxiv
\arxivfalse

\ifarxiv

\textbf{\Large 
A Scalable Optimization Approach for Equitable Facility Location: Methodology and Transportation Applications}
\medskip

\else

\begin{frontmatter}

\title{A Scalable Optimization Approach for Equitable Facility Location: Methodology and Transportation Applications}

\author[1]{Drew Horton} 
\ead{drew.horton@ucdenver.edu}
\author[2]{Tom Logan\corref{cor1}} 
\ead{tom.logan@canterbury.ac.nz}
\author[3]{Joshua Murrell}
\ead{5joshmurrell@gmail.com}
\author[3]{Daphne Skipper} 
\ead{daphne.skipper@gmail.com}
\author[1]{Emily Speakman} 
\ead{emily.speakman@ucdenver.edu}

\cortext[cor1]{Corresponding author}

\affiliation[1]{organization={University of Colorado}, 
address={Campus Box 170,
PO Box 173364,},
city={Denver, CO},
zip={80217},
country={USA}}
\affiliation[2]{organization={University of Canterbury}, 
address={Private Bag 4800,},
city={Christchurch}, 
zip={8041},
country={NZ}}
\affiliation[3]{organization={Annapolis, MD}, 
zip={21402},
country={USA}}

\begin{abstract}
Efficient and equitable access to essential services, such as healthcare, food, and education, is an important goal in urban planning, public policy, and transport logistics. However, existing facility location models often do not scale well to large instances, or primarily focus on optimizing average accessibility, neglecting equity concerns, particularly for disadvantaged populations. This paper proposes a novel, scalable framework for equitable facility location, introducing a linearized proxy for the Kolm-Pollak Equally-Distributed Equivalent (EDE) metric to balance efficiency and fairness. Computational experiments demonstrate that our approach scales to extremely large problem instances, while being sensitive enough to account for inequity throughout the distribution, not merely via the maximum value. Moreover, optimal solutions represent significant improvements for the worst-off residents in terms of distance to an open amenity, while also attaining a near-optimal average experience for all users. An extensive real-world case study on supermarket access illustrates the practical applicability of the framework, with additional examples coming from polling applications. As such, the model is extended to handle real-world considerations such as capacity constraints, split demand assignments, and location-specific penalties. By bridging the gap between equity theory and practical optimization, this work offers a robust and versatile tool for researchers and practitioners in urban planning, transportation, and public policy.
 \end{abstract}

\begin{keyword}
location \sep equitable facility location \sep Kolm-Pollak Equally-Distributed Equivalent \sep integer programming \sep urban planning optimization \sep discrete ordered median problem
\end{keyword}

\date{}

\end{frontmatter}

\fi


\section{Introduction}\label{intro}
The distribution of facilities within transportation networks shapes how resources and burdens are allocated across communities. From transit stops to emergency services, and from distribution centers to freight hubs, facility location decisions shape distributional justice. Traditional facility location models were designed to address operational problems and, as a consequence, typically provide solutions that are not concerned with the equity of the optimal distribution of distances \citep{Current2002-sb}.  For example, placing facilities to minimize the mean distance tends to leave some community members many miles from service.  On the other hand, prioritizing {\it only} the worst case, i.e., minimizing the maximum distance, ignores the average experience of the community.

Despite these challenges, incorporating equity into facility location models is vital because it can help ensure that resources and services are distributed fairly across a population, reducing disparities, and promoting social justice. This importance has led to a wealth of literature on the subject since the 1970s \citep{OBrien1969-wl,mcallister1976,Savas1978-pk}. Although various approaches have been proposed to incorporate equity in facility location models, including lexicographical optimization of multiple objectives \citep{Ogryczak2009-dx}, these existing methods either do not scale computationally to city-sized instances, cannot evaluate both desirable and undesirable quantities, or lose their normative significance when applied to burdens rather than benefits \citep{Barbati2018-nd}. This has resulted in numerous proposals for incorporating equity considerations \citep{Marsh1994-ax,Eiselt1995-ur,Drezner2009-sx,Ogryczak2009-dx,Lejeune2013-tr,Karsu2015-cb,Barbati2016-rj}, but without consensus on an effective and tractable approach.

The environmental justice literature has recently established clear criteria for metrics that can effectively evaluate distributions of amenities, such as clean air, and burdens, such as distance to an amenity, across a population \citep{LAWC2021}. These (six) properties include the ability to analyze population subgroups, consideration of both relative and absolute differences, and satisfaction of the mirror property -- allowing a consistent evaluation of both goods and their complementary burdens. The Kolm-Pollak Equally-Distributed Equivalent (EDE) has emerged as the preferred metric for environmental justice applications, as it {\it uniquely satisfies all these properties} while providing a normative basis for comparing distributions \citep{MS2020}. However, its non-linear form makes it challenging to optimize, limiting its use in prescriptive facility location models that could help address inequities. This raises a critical question: How can we optimize the Kolm-Pollak EDE in facility location problems while maintaining computational tractability for real-world networks?

We present a solution by providing a computationally tractable approach to optimize the Kolm-Pollak EDE in facility location problems. We prove equivalence between optimizing the non-linear EDE and a linear proxy, enabling the solution of large real-world instances. We extend this formulation to handle split demands and capacity constraints common in transportation applications, and provide guidance for effectively scaling the inequality aversion parameter to achieve desired outcomes. We also develop and analyze an approach for incorporating location-specific penalties, providing theoretical bounds on approximation errors and practical guidelines for implementation. Through computational experiments on instances from the largest U.S. cities, including New York City with over 200 million binary variables, we demonstrate that our approach scales while producing solutions that balance average access with protection of worst-off residents.

While we illustrate our methodology using supermarket access, this is only one example.  Our approach is broadly applicable to transportation facility location problems, including transit stop location, emergency service coverage, and distribution network design. Moreover, the mathematical framework developed for optimizing the Kolm-Pollak EDE could prove valuable for any domain where equity-based optimization of the distribution of resources or burdens is required.



\section{Background and Literature Review}\label{background}
\subsection{Facility Location Models}\label{sec:facility_location_background}
Facility location models optimize where to place services to best serve a population. The decision of where to locate each facility affects the distance that residents must travel, the cost of providing the service, and ultimately who has access. The most basic models aim to either minimize total travel costs or ensure complete coverage of the population. These include set covering models, which minimize the number of facilities needed to serve all residents within a given distance \citep{chvatal1979}; maximal covering models, which maximize the population served within a distance threshold using a fixed number of facilities \citep{Church1974-zz}; p-center models, which minimize the maximum distance any resident must travel \citep{FL1}; and p-median models, which minimize the average distance across the population \citep{FL1,Hakimi1965-ay}. These models have been applied extensively across many domains \citep{Current2002-sb}, but their focus on operational efficiency has important implications for equity.

Our approach builds on the p-median model, which we explain here to establish notation. Consider a region with:
\begin{itemize}
    \item[-] a set of residential areas, $R$ (the ``origins" or ``customers");
    \item[-] a set of potential facility locations, $S$ (points of interest);
    \item[-] a target number of facilities to be opened, $k$.
\end{itemize}

For each residential area $r \in R$, we know:
\begin{itemize}
    \item[-] the population living there,  $p_r$.
    \item[-] the distance to each potential facility location, $d_{r,s}$, for all $s \in S$.
\end{itemize}

Let $T = \sum_{r \in R} p_r$ represent the total population across all residential areas. To track our decisions, we use two sets of binary variables:
\begin{align*}
x_s &:= \begin{cases} 1 & \text{if we open a facility at location } s,\\ 0 & \text{otherwise,} \end{cases} && \forall~ s \in S; \\[1ex]
y_{r,s} &:= \begin{cases} 1 & \text{if we assign residents from area } r \text{ to facility } s,\\ 0 & \text{otherwise,} \end{cases} && \forall~ s \in S, r \in R.
\end{align*}

Using this notation, the standard p-median model can be written as:
\begin{subequations}\label{model:facilitylocation}
\begin{align}
\text{(p-Med)} \quad\quad \text{minimize} ~~~  &\sum_{r\in R}\sum_{s\in S}p_rd_{r,s}y_{r,s}, \label{pmedobj}\\
\text{subject to} ~~~ &\sum_{s \in S} x_s ~=~ k; \label{numfacilities} \\
 &y_{r,s} ~\leq~ x_s, && \forall ~ r \in R, s \in S; \label{openfacilities}\\
 &\sum_{s \in S} y_{r,s} ~=~ 1, &&\forall ~ r \in R; \label{eachorgin1destination}\\
 &\, x_s \in \{0,1\}, && \forall \, s \in S; \label{xbinary} \\
&\, y_{r,s} \in \{0,1\}, && \forall \, r \in R, s \in S. \label{ybinary}
\end{align}
\end{subequations}

The p-median formulation minimizes the average (or equivalently, total) distance traveled by residents to their assigned facility. The constraints ensure that: we open exactly $k$ facilities \eqref{numfacilities}; residents can only be assigned to open facilities \eqref{openfacilities}; and every residential area is assigned to exactly one facility \eqref{eachorgin1destination}. However, by focusing solely on minimizing average distance, this approach can leave some community members unacceptably far from service, highlighting the need for models that explicitly consider equity.

\subsubsection{Incorporating Equity into Facility Location Models}

The goal is to replace the standard p-median objective function, given by
\eqref{pmedobj}, with an alternative function.  The new objective must adequately quantify the equitable access of our population without compromising computational scalability of the facility location model.  Early attempts at this focused on protecting the worst-off residents. The traditional p-center model \citep{Hakimi1965-ay} minimizes the maximum distance that any resident must travel by introducing a single variable $z$ that captures the maximum distance:
\begin{align*}
 \text{(p-Ctr)} \quad\quad \text{minimize} \hspace{0.5cm}& z, \\
   \text{subject to} \hspace{0.5cm} &
   z ~\geq~ d_{r,s}y_{r,s}, \hspace{0.5cm} & \forall ~ r \in R, s \in S; \\
   & \eqref{numfacilities} - \eqref{ybinary}. \nonumber
\end{align*}

While the p-center model ensures no individual is excessively far from a facility, it ignores the experience of the majority of residents. Solutions that appear equitable by protecting the worst-off may provide poor service to the community as a whole. Additionally, the p-center model does not scale computationally to large problems as well as the p-median model.

A natural extension is to minimize a convex combination of the two objective functions. 
Let $\gamma\in [0,1]$, then this appropriately termed ``p-centdian" model \citep{Halpern1976-fz} may be written as: 
\begin{align*}
\text{(p-Ctdn)} \quad\quad \text{minimize} \hspace{0.5cm}& \gamma z+ (1-\gamma)\frac{1}{T}\sum_{r\in R}\sum_{s\in S}p_rd_{r,s}y_{r,s}, \label{pctdnobj} \\
\text{subject to} \hspace{0.5cm} & z ~\geq~ d_{r,s}y_{r,s}, \hspace{0.5cm} & \forall ~ r \in R, s \in S;  
\nonumber \\
   & \eqref{numfacilities} - \eqref{ybinary}. \nonumber
\end{align*}

The value of $\gamma$ allows the user to weight the average distance term and the maximum distance term in the objective function. However, $\gamma$ is not readily interpretable beyond the obvious that larger values lead to the max term taking on greater importance. Moreover, due to the inclusion of this term, the p-centdian does not scale computationally as well as the p-median model.  Along with the p-median and p-center models, we computationally explore the p-centdian model in detail in Section \ref{sec:computational_study}.

The Discrete Ordered Median Problem (DOMP) provides a general framework in which to view (p-Med), (p-Ctr), and (p-Ctdn).  The objective function of the DOMP can be understood as minimizing a weighted sum of the distances from each resident to their closest open facility, i.e., for a resident located in area $r$, this distance is given by $\sum_{s\in S}d_{r,s}y_{r,s}$. The weight given to a particular resident is a penalty that depends on the position of $\sum_{s\in S}d_{r,s}y_{r,s}$ ordered relative to all other residents, and thus, accounts for equity. When all weights are equal, we obtain (p-Med), and at the other end of the spectrum, when all weights {\it not} corresponding to the resident traveling the furthest are set to zero, we obtain (p-Ctr).  

Various linearizations have been developed which allow the DOMP to be formulated as a linear integer optimization problem \citep{10.1007/978-3-642-56656-1_12}.  Moreover, several custom algorithms have been proposed to solve the DOMP; leveraging techniques such as branch and cut \citep{BOLAND20063270, MARIN20091128, MARIN201127,Deleplanque20}, branch and bound \citep{Cherkesly25}, and Benders decomposition \citep{LJUBIC2024858}.
For example, \cite{LJUBIC2024858} present a Benders decomposition method that matches or exceeds the current state-of-the-art on randomly generated instances of DOMP in computational tests. This method solved roughly 25\% of instances of the largest problem size of $|R|=|S|=200$ within a two hour time limit.   Through extensive testing, \cite{Cherkesly25} demonstrate that their specialized branch-and-bound method successfully solves around 25\% of instances of size $|R|=|S|=500$ within a two hour time limit. These include problems equivalent to (p-Med) and (p-Ctr) as well as more general DOMP problems. Here, our goal is to solve the equitable facility location problem on data sets with $|R| = 2067$ and $|S|= 350$ on average, and as large as $|R| = 30,094$ and $|S| = 8,274$.  In order for the methodology to be accessible to a wide range of practitioners, we also wish to use an ``out-of-the-box" integer programming solver, as opposed to a specialized method, therefore we seek an alternative framework to account for equity.

\subsection{Measuring Inequality} \label{sec:quantifying_inequality}

Thus far, we have presented fundamental examples of facility location models focused on (i) efficiency, i.e., p-median; (ii) dispersion, i.e., p-center; and (iii) a parameter-controlled weighting of efficiency and dispersion (the p-centdian).  In addition to these, many other metrics have been proposed to incorporate equity considerations in facility location, although each has important limitations. To enable us to consider the alternatives in context, we first discuss the properties required to appropriately measure equity.

\subsubsection{Properties Required for Equity Assessment}
The chosen location of services and amenities characterizes how a burden, namely travel distance, is distributed across a population.  Thus, these decisions have significant equity implications and substantially impact community well-being, particularly for marginalized populations.  For example, one key contributor to food insecurity is the travel distance required to reach a store offering affordable fresh produce, moreover, this burden is not equitably distributed in the majority of urban areas across the US \citep{Neff09}.  

The question of how best to quantify ``equitable distribution" has long been considered in the environmental justice literature.    More recently, clear criteria have been established for metrics used to evaluate distributions of both amenities and burdens; see \cite{LAWC2021} and the references therein. There are six key properties that an inequality measure should possess if it is used to evaluate the distribution of a singular amenity or burden.  For completeness, we note that a seventh and final property is included, however, this ``multivariate’’ property is not relevant to our discussion and is omitted from the below.  

The first property that characterizes an ideal metric is \emph{symmetry} (or impartiality). The intuitive definition easily translates to a mathematical one; a function is symmetric if its value remains unchanged under any permutation of its arguments.  For example, if person A has an income of \$1 and person B has an income of \$5, an inequality measure should take on the same value as it would if person A had an income of \$5 and person B had an income of \$1.

The second property is \emph{population independence}.  This states that the inequality measure is not influenced by the number of individuals in the community and is therefore appropriate for the comparison of different populations.  

The third property is known as \emph{scale dependence} and ensures that the level of the amenity or burden is taken into account as well as its spread across the community.  An example quickly demonstrates why this property is key.  Without it, an inequality metric for income could declare that distribution I, corresponding to everyone in our population earning \$1, is preferable to distribution II, corresponding to half our population earning \$5 and the other half earning \$10.  However, in environmental justice applications, where lack of an amenity or excess of a burden has a direct impact on well-being, logic dictates we prefer distribution II, where every community member is comparatively better off.  

The fourth property is called \emph{the principle of transfers} and guarantees that if we redistribute some fixed quantity from a relatively advantaged member of our community to a disadvantaged member, then our measure should improve (assuming no other changes occurred).  

The fifth condition states that the inequality measure should satisfy the \emph{mirror property} which ensures the measure can appropriately deal with amenities \emph{and} burdens.  Failure to meet this property can lead to unexpected consequences. We would like our ordering of distributions by preference to remain the same irrespective of whether we consider an amenity directly or its complementary burden.  For example, the same distribution should be preferred regardless of whether we consider the `burden’ of carbon dioxide in the air or the `amenity’ of clear air measured by a lack of carbon dioxide.  It is the mirror property that ensures this consistency.   

Finally, the sixth property is \emph{separability} and enables meaningful comparison between subgroups of our population.  Importantly, it allows us to assess disparities between demographic groups.

\subsubsection{Review of Existing Measures (Suitability and Scalability)}

As we have seen, the maximum value metric, used in p-center models, optimizes the experience of the worst-off community member. In doing this, both the maximum value and range metrics suffer from considering only extreme values while ignoring the overall distribution. Moreover, neither satisfies the principle of transfers. Several other measures attempt to capture dispersion by examining deviation from the mean, including mean absolute deviation, variance, and the variance of logarithms. However, these also fail to satisfy the principle of transfers \citep{Allison1978-lk,Erkut1993-kw}.

More sophisticated inequality measures have also been explored, including Schutz's Index, the coefficient of variation, the Gini coefficient, Theil's entropy coefficient, and Atkinson's coefficient. While some of these metrics satisfy the transfers principle, they have other critical drawbacks. The Gini coefficient, despite its widespread use in facility location models \citep{Mandell1991-by, Mulligan1991-zf, Drezner2009-sx, Lejeune2013-tr, Alem2022-rp}, measures only dispersion without considering the overall level of the distribution. Thus, it may prefer a solution where all residents are equally far from a facility over one with shorter but varied distances -- clearly problematic for accessibility. The coefficient of variation and Theil's coefficient suffer from the same limitation.  This problem, i.e., violating scale dependence, is often overcome by augmenting the preferred measure of dispersion with an efficiency metric, a key example of this being the p-centdian objective function.  However, it is often not clear how to ``best" balance dispersion and efficiency metrics. In addition, the resulting measure is not guaranteed to satisfy all six of the properties necessary to assess equity.

Alternative approaches proposed to incorporate equity in facility location models include lexicographical optimization of multiple objectives \citep{Ogryczak2009-dx} and general p-norm optimization approaches \citep{Karsu2015-cb,Olivier2022,Gupta23,Xinying_Chen2023}. However, these methods either do not scale computationally, cannot evaluate both desirable and undesirable quantities, or lose their normative significance when applied to burdens rather than benefits \citep{Barbati2018-nd}.

Equally-distributed equivalents (EDEs) offer a promising direction as they consider both the center and spread of a distribution in a single measure. The Atkinson EDE \citep{EDEAt} has been applied to facility location, but makes the implicit assumption that higher values represent better welfare. Thus, it is inappropriate for measuring burdens such as travel distance because it does not satisfy the mirror property \citep{Cox2012}.  Furthermore, computational studies have shown that optimizing the Atkinson EDE becomes intractable for realistically-sized problems \citep{Barbati2018-nd}.

Indeed, computational tractability emerges as a major barrier for many of the proposed equity metrics discussed. Recent comprehensive studies found that several measures 
are too complex to incorporate into optimization models that can handle real-world problem sizes \citep{Barbati2018-nd, Xinying_Chen2023}.  

As a concrete example, we present Table \ref{tab:comparisonB2018} taken directly from \cite{Barbati2018-nd}, and adapted only for the purposes of consistent notation.  Observe that in the largest instances with $|R|=|S|=60$ and $k=10$, only two of the ten measures considered could be solved to optimality within the 2 hour time limit for all instances. Recall that the problems we are interested in are orders of magnitude larger; hence the computational results suggest that we will be unable to optimize the majority of the measures, either directly or when combined with an efficiency measure.

\begin{table}[ht]
\centering
\begin{tabular}{l|cc|cc|cc}
\hline
Measure & \multicolumn{2}{c|}{$|R| = |S| = 20$} & \multicolumn{2}{c|}{$|R| = |S| = 40$} & \multicolumn{2}{c}{$|R| = |S| = 60$} \\
\cline{2-7}
& $k = 2$ & $k = 10$ & $k = 2$ & $k = 10$ & $k = 2$ & $k = 10$ \\
\hline
Center (Maximum) & $<1$ & $<1$ & 16 & 4 & 43 & 20 \\
Range & 1 & 4 & 17 & 21 & 464 & 269 \\
Mean Absolute Deviation & $<1$ & $<1$ & 24 & 264 & 166 & N/A \\
Variance & 3 & $<1$ & 205 & 810 & 1206 & N/A \\
Maximum Absolute Deviation & 3 & $<1$ & 7 & 6 & 143 & N/A \\
Absolute Difference & 6 & 2 & 46 & 2656 & 1123 & N/A \\
Sum Maximum Difference Absolute & 13 & 3 & 56 & 767 & 1004 & N/A \\
Schutz's Index & 3 & 3 & 81 & N/A & 1033 & N/A \\
Coefficient of Variation & 4 & 22 & 130 & N/A & 1588 & N/A \\
Gini Coefficient & 5 & 7 & 182 & N/A & 4022 & N/A \\
\hline
\end{tabular} 
\smallskip

\caption{\citep{Barbati2018-nd}. Average solve time (in seconds) over 100 randomly generated test problems with $|R|$ demand points and $|S|$ potential facility locations, selecting $k$ locations. ``N/A'' denotes that an optimal solution could not be found for at least one of the instances within the time limit of 2 hours.  Computations were performed using the solver Cplex v12.00 on a Pentium IV with 2.40 GigaHertz and 4.00 GigaBytes of RAM.}
\label{tab:comparisonB2018}
\end{table}

The lack of a metric that demonstrably satisfies all six properties necessary to assess equity, in addition to computational challenges, had left the field of environmental justice without consensus on an appropriate and practical approach to measure equity in facility location problems.
This fact motivated the development of the Kolm-Pollak EDE, which uniquely satisfies all key properties required for equity assessment in environmental justice applications. However, like many previous measures, its nonlinear form has limited its use to descriptive analysis rather than prescriptive optimization. This research bridges this gap. By developing a computationally tractable approach to optimizing the Kolm-Pollak EDE, we enable its application to large-scale facility location problems, allowing transportation planners to not just measure inequity but actively design for equity.

\subsubsection{The Kolm-Pollak EDE} \label{kolmpollakede}

The Kolm-Pollak Equally-Distributed Equivalent (EDE) was introduced by \cite{MS2020} based on Kolm–Pollak preferences \citep{Kolm1976-ab}, and is uniquely suited for evaluating distributions of both amenities and burdens in transportation systems. Unlike previous measures, it satisfies {\it all six} properties required for equity assessment. 

When applied to travel distances, the Kolm-Pollak EDE represents the uniform distance that, if experienced by all residents, would generate the same level of social welfare as the actual unequal distribution. This can be interpreted as the mean distance plus a penalty for inequality, where the penalty increases with greater disparity in access. Mathematically, for a distribution of distances $z_1, z_2, \dots, z_N$, the Kolm-Pollak EDE is defined as:
\begin{align}
\mathcal{K}(\mathbf{z}) := -\frac{1}{\kappa}\ln\left[\frac{1}{N}\sum_{i=1}^{N}e^{-\kappa z_i}\right].
\end{align}

The parameter $\kappa := \alpha\epsilon$ incorporates society's aversion to inequality through $\epsilon$, where the sign of $\epsilon$ depends on whether larger or smaller values of the quantity are preferred. For desirable quantities like income or accessibility, where more is better, $\epsilon > 0$ and the Kolm-Pollak EDE will be less than or equal to the mean. Conversely, for undesirable quantities like travel distance or exposure to pollution, where less is better, $\epsilon < 0$ and the EDE will be greater than or equal to the mean. In either case, the magnitude $|\epsilon|$ (typically between 0.5 and 2), represents the strength of aversion to inequality. Here $\alpha$ is a normalization factor, translating $\epsilon$ into $\kappa$:
\begin{align}
\alpha := \frac{\sum_{i=1}^{N} z_i}{\sum_{i=1}^{N}z_i^2}.
\end{align}

\autoref{fig:ede} illustrates this concept for a distribution of travel distances to facilities or resources. The histogram shows how distances are distributed across the population, with the mean distance indicated by a vertical line. Since travel distance is undesirable, we use $\epsilon < 0$ resulting in an EDE that exceeds the mean. The gap between the EDE and mean represents the inequality penalty -- larger values of $|\epsilon|$ increase this penalty, reflecting greater concern for residents experiencing particularly long travel times. The EDE provides transportation planners with a single metric that captures both average system performance and distributional fairness.

\begin{figure}[h!]
    \centering
    \includegraphics[width=0.7\textwidth]{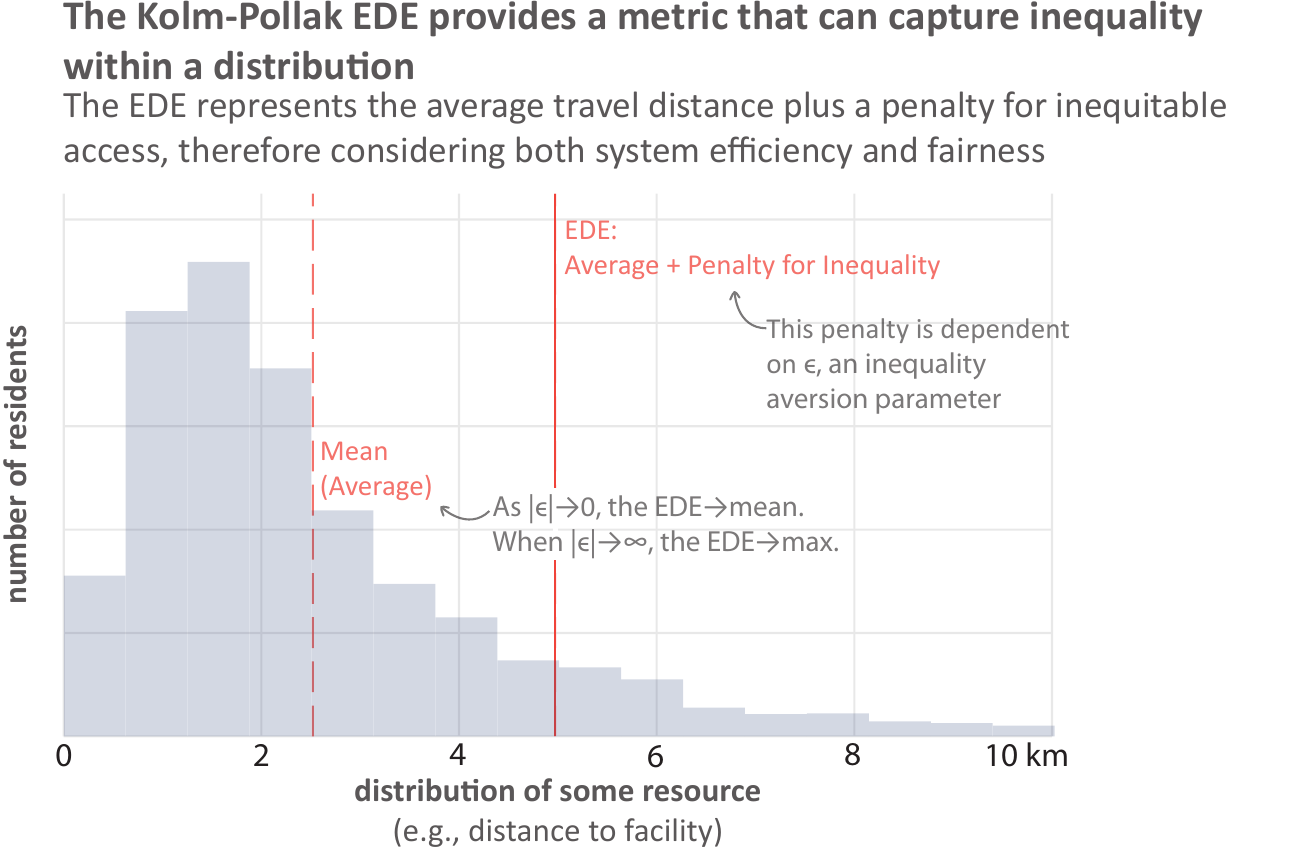}
    \caption{This figure shows a hypothetical distribution of residents' distance to a resource, with the average (mean) and Kolm-Pollak Equally-Distributed Equivalent (EDE) indicated by vertical lines. The inequality penalty—shown by the gap between the mean and EDE—increases with the value of the inequality aversion parameter and reflects the presence of residents who are significantly worse off than average.}
    \label{fig:ede}
\end{figure}

In addition to satisfying the required properties, a key advantage of the Kolm-Pollak EDE is how it bridges between conventional facility location objectives. As $\epsilon$ approaches 0, the EDE approaches the mean distance ($p$-median objective), while as $\epsilon$ approaches negative infinity, it approaches the maximum distance ($p$-center objective). This provides a principled way to balance average accessibility against the experience of the worst-off residents.  Moreover $\epsilon$, the so-called ``inequality aversion" parameter, is consistent with other EDE functions from the literature, for example the Atkinson EDE, and is well-understood by the environmental justice community.  

\begin{example} 
Table \ref{tab:psuedo-data} demonstrates properties of the Kolm-Pollak EDE using four example distance distributions sharing the same mean but having varying inequality. As disparity increases, the EDE grows larger than the mean, with the gap widening for larger values of $|\epsilon|$. This captures how inequitable access reduces overall system performance from an environmental justice perspective.

\begin{table}[ht]
\setlength{\tabcolsep}{7pt}
\centering

\begin{tabular}{c|cccccccccc}
\toprule 
Distribution  & $z_1$ & $z_2$ & $z_3$ & $z_4$ & Mean & St. Dev. & Max & \multicolumn{3}{c}{Kolm-Pollak} \\
&&&&&&&& $\epsilon = -1$ & $\epsilon = -2$ & $\epsilon = -50$ \\[0.1em]
 \midrule
1 & 100 & 100 & 100 & 100 & 100 & 0 & 100 & 100 & 100 & 100\\
2 & 50 & 75 & 125 & 150 & 100 & 46.5 & 150 & 106.7 & 112.7 & 146.8 \\
3 & 0 & 0 & 200 & 200 & 100 & 115.5 & 200 & 124.0 & 143.4 & 197.2\\
4 & 0 & 0 & 0 & 400 & 100 & 200 & 400 & 142.9 & 190.9 &389.0\\
    \bottomrule 
\end{tabular}

\smallskip

\caption{The mean distance is 100 for all four distributions, while the Kolm-Pollak scores are larger for less equitable distributions, increasingly so as $\abs{\epsilon}$ increases.  For large values, $\abs{\epsilon}=50$, we observe that the Kolm-Pollak scores are approaching the maximum of the distribution. }
\label{tab:psuedo-data}
\end{table}

\end{example}

\section{A Framework for Equitable Facility Location} \label{EFLP}
\subsection{Base Model Formulation}
Transportation planners and geographers need to evaluate how equitably a network serves its population. The Kolm-Pollak EDE addresses this by quantifying accessibility while penalizing disparities in service. To apply this metric to facility networks, we measure the distance each resident must travel to reach their nearest facility, weighted by population density since residential data is typically aggregated at the census block level (the USA's smallest census reporting area). Let $p_r$ represent the population of residential area $r \in R$, and let $z_r$ represent the distance that residents of $r$ must travel to reach a facility. The Kolm-Pollak EDE for this distribution is,
\begin{align} \label{popweightedkp}
\mathcal{K}(\mathbf{z}) = -\frac{1}{\kappa}\ln\left[\frac{1}{T}\sum_{r \in R}p_re^{-\kappa z_r}\right],
\end{align}
where $\mathbf{z} \in \mathbb{R}^{|R|}$ is the vector of travel distances, $T:=\sum_{r \in R}p_r$ is the total population over all residential areas, $\kappa := \alpha\epsilon$, and
\begin{equation} \label{eq:alpha}
    \alpha ~:=~ \frac{\sum_{r \in R} p_rz_r}{\sum_{r \in R} p_r(z_r)^2}.
\end{equation}
We model the distance traveled by residents of $r \in R$ as $z_r := \sum_{s\in S}y_{r,s}d_{r,s}$ (where $y_{r,s}$ and $d_{r,s}$ are as defined in Section \ref{background}), so we have 
\begin{align}
\label{eq:KP}
\mathcal{K}(\mathbf{y}) = -\frac{1}{\kappa}\ln\left[\frac{1}{T}\sum_{r\in R}p_re^{-\kappa \sum_{s\in S}y_{r,s}d_{r,s}}\right],
\end{align}
with $\mathbf{y}\in \mathbb{R}^{|R|\times |S|}$.
We can simplify this expression for use in an optimization model, which is a necessary step given the size of the instances we wish to solve.

The first thing to note is that $\alpha$, and therefore $\kappa = \alpha\epsilon$, depends on our optimization variables. However, we can greatly simplify the optimization by choosing to treat $\alpha$ as a constant. Fortunately, we can do this and still be minimizing an actual Kolm-Pollak EDE. 

Recall that $\alpha$ scales $\epsilon$, the chosen inequality aversion parameter, for use with the precise distribution we wish to evaluate.   This means that the ``true'' value of $\alpha$ corresponds to the distribution given by an optimal solution to the optimization model, which we do not have a priori.  However, we can estimate this distribution, and therefore $\alpha$, using the problem data.  

Crucially, we can quantify exactly how using an estimate for $\alpha$ impacts our solution. If $\alpha$ were expressed as a function of the decision variables in the objective function, then the aversion to inequality used to obtain the optimal solution would exactly match the $\epsilon$ selected by the user. The consequence of fixing $\alpha$ at the outset is that the user does not have complete control over the chosen level of aversion to inequality.  For example, the user may have intended to choose an inequality aversion level of $\epsilon = -1$, however, because this is only scaled approximately, in reality, the solution obtained by the model optimizes a different level of inequality aversion.  Once we have obtained this optimal solution, we can calculate the true value of $\epsilon$ and compare it to the level of inequality aversion that was intended.  Given that the choice of inequality aversion is somewhat subjective, it may be that we are happy with the approximation.  If not, we can re-solve the model with a more accurate $\alpha$-approximation. Extensive computational tests yielded a near-perfect level of inequality aversion on the second run of the model in every instance.  See Section \ref{sec:scaling_parameter} for a detailed discussion. 

Once we assume that $\alpha$ (and therefore $\kappa$), is constant, we are able to claim that optimizing $\mathcal{K}(\mathbf{y})$ is equivalent to optimizing the function, \begin{equation*}
    \breve{\mathcal{K}}(\mathbf{y}):=\sum_{r \in R}p_re^{-\kappa \sum_{s \in S}y_{r,s}d_{r,s}}.
\end{equation*} 
In this context, equivalent means that the optimal solution set is the same in both cases and that it is easy to recover the optimal value of $\mathcal{K}$ from the optimal value of $\breve{\mathcal{K}}$.

First recall that in our setting, $\epsilon < 0$, and so we can remove the positive constant $-\frac{1}{\kappa}$ for optimizing $\mathcal{K}$. Then observe that because the natural logarithm function is monotonically increasing, we can equivalently minimize the argument of the function.  Finally, dropping the positive constant $\frac{1}{T}$ gives us $\breve{\mathcal{K}}$. We formally define the Kolm-Pollak facility location model (KP) as follows, using the same notation as defined for (p-Med) above:
\begin{align} \label{kpobj}
  \text{(KP)} \quad\quad \text{minimize} \hspace{0.5cm}& \breve{\mathcal{K}}(\mathbf{y}) :=  \sum_{r \in R}p_re^{-\kappa \sum_{s \in S}y_{r,s}d_{r,s}},\\
    \text{subject to} \hspace{0.5cm} &
    \eqref{numfacilities} - \eqref{ybinary}. \nonumber
\end{align}

\begin{proposition} \label{prop:equivalent} Suppose $y_{r,s} \in \{0,1\}$ for all $s \in S$, $r\in R$, and  $\sum_{s\in S} y_{r,s} 
= 1$ for all $r \in R$. Then 
\begin{equation*}
    \sum_{r \in R}p_re^{-\kappa \sum_{s \in S}y_{r,s}d_{r,s}}\\
    ~=~\sum_{r \in R}\sum_{s \in S}p_ry_{r,s}e^{-\kappa d_{r,s}}.
\end{equation*}
\end{proposition}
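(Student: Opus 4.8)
The plan is to reduce the identity to a term-by-term comparison inside the outer sum over $r \in R$, exploiting the combinatorial structure that the hypotheses impose on each row of $\mathbf{y}$. First I would fix an arbitrary $r \in R$ and observe that the assumptions $y_{r,s} \in \{0,1\}$ together with $\sum_{s \in S} y_{r,s} = 1$ force exactly one index $s^*(r) \in S$ to satisfy $y_{r,s^*(r)} = 1$, with $y_{r,s} = 0$ for every $s \neq s^*(r)$. This single observation is the engine of the whole argument.

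Next I would evaluate each side at this fixed $r$. On the left, the exponent collapses to a single surviving term, $\sum_{s \in S} y_{r,s} d_{r,s} = d_{r,s^*(r)}$, so the $r$-th summand equals $p_r e^{-\kappa d_{r,s^*(r)}}$. On the right, the indicator picks out the same term of the inner sum, $\sum_{s \in S} p_r y_{r,s} e^{-\kappa d_{r,s}} = p_r e^{-\kappa d_{r,s^*(r)}}$. Thus the two $r$-th summands coincide, and summing the equality over all $r \in R$ yields the claim.

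There is no genuine analytic difficulty here; the content is entirely combinatorial. The one point worth flagging is that the identity is not an algebraic property of the exponential --- in general $e^{-\kappa(a+b)} \neq e^{-\kappa a} + e^{-\kappa b}$ --- and it holds only because the binary, sum-to-one structure guarantees that both the exponent on the left and the inner sum on the right reduce to a single active distance $d_{r,s^*(r)}$. Consequently the main thing to handle carefully when writing the proof is to justify the ``exactly one nonzero entry'' step cleanly, since every subsequent equality rests on it; everything after that is immediate substitution.
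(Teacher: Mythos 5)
Your proof is correct and follows essentially the same route as the paper's: fix $r \in R$, use the binary and sum-to-one hypotheses to identify the unique index $s'$ with $y_{r,s'}=1$, observe that both the exponent on the left and the inner sum on the right collapse to the single term $p_r e^{-\kappa d_{r,s'}}$, and sum over $r$. No gaps; the observation you flag as the key step is exactly the one the paper relies on.
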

For the proof, see \ref{sec:app}.

Replacing $\breve{\mathcal{K}}$ with the linear version, which we denote $\overline{\mathcal{K}}$, we obtain the Kolm-Pollak linear proxy model: 
\begin{align} 
  \text{(KPL)} \quad\quad \text{minimize} \hspace{0.5cm}& \overline{\mathcal{K}}(\mathbf{y}) :=  \sum_{r \in R}\sum_{s \in S}p_r y_{r,s}e^{-\kappa d_{r,s}}, \label{kplpobj} \\
    \text{subject to} \hspace{0.5cm} &
    \eqref{numfacilities} - \eqref{ybinary}. \nonumber
\end{align}
\begin{corollary}\label{cor:linearkp}
(KPL) has the same set of optimal solutions as (KP). 
\end{corollary}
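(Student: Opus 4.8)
The plan is to observe that (KP) and (KPL) are optimization problems over the \emph{same} feasible region, and that their two objective functions coincide at every feasible point; from this the claim follows immediately, since two minimization problems sharing a feasible set and agreeing pointwise on it must have the same optimal value and the same set of minimizers. So the entire argument reduces to invoking Proposition \ref{prop:equivalent} on the feasible set.

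First I would record that (KP) and (KPL) are subject to exactly the same constraints, namely \eqref{numfacilities}--\eqref{ybinary}; hence they share a common feasible set, which I will call $\mathcal{F}$. The objective of (KP) is $\breve{\mathcal{K}}(\mathbf{y})$ and the objective of (KPL) is $\overline{\mathcal{K}}(\mathbf{y})$, but these are evaluated over the identical domain $\mathcal{F}$.

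Next I would check that every $\mathbf{y} \in \mathcal{F}$ satisfies the two hypotheses of Proposition \ref{prop:equivalent}. The integrality constraint \eqref{ybinary} supplies $y_{r,s} \in \{0,1\}$ for all $r \in R$, $s \in S$, and the assignment constraint \eqref{eachorgin1destination} supplies $\sum_{s \in S} y_{r,s} = 1$ for all $r \in R$. Proposition \ref{prop:equivalent} then applies verbatim and gives $\breve{\mathcal{K}}(\mathbf{y}) = \overline{\mathcal{K}}(\mathbf{y})$ for every $\mathbf{y} \in \mathcal{F}$. Consequently, for any feasible $\mathbf{y}$ and $\mathbf{y}'$ we have $\breve{\mathcal{K}}(\mathbf{y}) \le \breve{\mathcal{K}}(\mathbf{y}')$ if and only if $\overline{\mathcal{K}}(\mathbf{y}) \le \overline{\mathcal{K}}(\mathbf{y}')$, so a feasible point minimizes one objective over $\mathcal{F}$ exactly when it minimizes the other. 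The optimal solution sets therefore coincide.

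I do not expect any genuine obstacle here: the corollary is a direct consequence of Proposition \ref{prop:equivalent} combined with the fact that the two models carry identical constraint sets. The only point requiring care is the bookkeeping step of confirming that membership in $\mathcal{F}$ implies the hypotheses of the proposition — that is, that constraints \eqref{eachorgin1destination} and \eqref{ybinary} are precisely the assumptions ``$y_{r,s}\in\{0,1\}$'' and ``$\sum_{s\in S} y_{r,s}=1$'' — which is immediate from the model definition.
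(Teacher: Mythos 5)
Your proof is correct and follows exactly the paper's intended route: the paper states the corollary without proof as an immediate consequence of Proposition \ref{prop:equivalent}, relying on precisely the observation you spell out, namely that (KP) and (KPL) share the feasible set defined by \eqref{numfacilities}--\eqref{ybinary}, on which constraints \eqref{eachorgin1destination} and \eqref{ybinary} guarantee the proposition's hypotheses, so the two objectives agree pointwise and the minimizer sets coincide. Your write-up simply makes explicit the bookkeeping the paper leaves implicit.
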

An optimal Kolm-Pollak EDE score (using the approximate value for $\alpha$), ${\mathcal{K}}^*$, can be recovered from an optimal objective value of (KPL), $\overline{\mathcal{K}}^*$, as follows:
\begin{equation}
\mathcal{K}^* ~=~ -\frac{1}{\kappa}\ln\left(\frac{1}{T}\overline{\mathcal{K}}^*\right).
\end{equation}

\subsection{Model Extensions for Practice} 
\label{sec:modelextensions}
In this section, we discuss how a few common facility location modeling scenarios can be applied within the (KPL) model. 

\subsubsection{Facility Cost} \label{sec:facilitycosts}
Suppose we wish to incorporate a ``fixed-charge" of $c_s$ for opening facility $s \in S$. The simplest way to incorporate facility fixed-charges into the (KPL) model is to add a budget constraint of the form,
$\sum_{s \in S} c_s x_s \leq b$,
 where $b$ represents the overall facility budget. This constraint might replace constraint \eqref{numfacilities}, depending on the modeling scenario. 

If the $c_s$ and $d_{r,s}$ parameters represent values with the same units (dollars or meters, for example), then we might wish to incorporate a fixed-charge term into the (KPL) objective function directly. However, because the $d_{r,s}$ values do not appear in their standard units in the (KPL) objective function (due to dropping the log function), we must take care to add the fixed-charges, $c_s$, so that they have the appropriate weight relative to the $d_{r,s}$ values. We tackle this modeling scenario in detail in Section \ref{sec:penalty}. If $c_s$ and $d_{r,s}$ represent different values (say cost and distance, respectively), then a multi-objective approach would more appropriate for capturing the fixed-charge expression as an additional objective; see \cite{Cho2017-multiobectivesurvey} for a survey of multi-objective optimization techniques.

\subsubsection{Facility Capacity}
\label{splitdemands}

Facility capacities are a natural consideration in many facility location applications. If $C_s$ represents the number of customers that can be served at facility $s$, the constraints, 
\begin{equation} \label{cons:capacity}
\sum_{r\in R} p_r y_{r,s} ~\leq~ C_s x_s, ~~\forall ~ s \in S,
\end{equation}
ensure that the capacity of every opened facility is respected by feasible solutions. 

\subsubsection{Split Demand Assignment}
In some applications, it makes sense to allow the total demand of customer $r \in R$ to be served by multiple open facilities. Allowing for split demands is a natural consideration when service facilities have capacities, and it might not be possible for each customer to be assigned to the closest open facility. We allow for split demands by relaxing the integrality of the $y_{r,s}$ variables; i.e., by replacing constraints \eqref{ybinary} with $y_{r,s} \in [0,1]$. 

Interestingly, when the $y_{r,s}$ variables are relaxed to the continuous interval $[0,1]$, Proposition \ref{prop:equivalent} no longer holds, but (KPL) may actually be preferred over (KP) as the more accurate model. Recall that for a given $r \in R$, constraints \eqref{eachorgin1destination} ensure $\sum_{s \in S} y_{r,s} = 1$, and first consider the original, nonlinear Kolm-Pollak expression,
\begin{equation}
-\frac{1}{\kappa}\ln\left[\frac{1}{T}\sum_{r \in R}p_re^{-\kappa \sum_{s \in S}y_{r,s}d_{r,s}}\right]. \tag{\ref{eq:KP}}
\end{equation}

For a fixed $r \in R$, the expression $\sum_{s \in S}y_{r,s}d_{r,s}$ represents a weighted average of the distances from $r$ to each of the facilities assigned to $r$. The distribution of distances measured by expression \eqref{eq:KP} assumes this weighted average distance for each of the $p_r$ people residing in census block $r$, and as such, inequality between residents in the {\it same} census block is not accounted for in the Kolm Pollak score. Poor distances are obscured because the weighted average is taken before the Kolm-Pollak expression is applied. However, this is not the case in the expression minimized by (KPL).

Substituting the linear proxy into the Kolm-Pollak EDE, we obtain the expression that is minimized by (KPL):
\begin{equation}
\label{eq:KPalt}
-\frac{1}{\kappa}\ln\left[\frac{1}{T}\sum_{r \in R}\sum_{s \in S} p_r y_{r,s}e^{-\kappa d_{r,s}}\right].
\end{equation}
In this version of the Kolm-Pollak expression, the weighted average defined by $y_{r,s}$ for a given $r$ is applied to the collection of Kolm-Pollak terms of the form $p_r e^{-\kappa d_{r,s}}$, as opposed to the distances directly. In other words, (KPL) models the scenario as if $p_r y_{r,s}$ people travel to facility $s$ at a distance of $d_{r,s}$, and therefore {\it does} account for any inequality between residents in the same census block. Note that because $p_r y_{r,s}$ may be fractional, the expression minimized by (KPL) is not technically a Kolm-Pollak score. However, the expression is a more accurate model of the Kolm-Pollak EDE any time we wish to take into account the inequality {\it within} a given residential area because distances are appropriately penalized. See Example \ref{ex:KPsplitdemands} for a concrete illustration of these concepts.

\begin{example}
\label{ex:KPsplitdemands}
Consider a single residential area with a total population of 100.  Suppose half of the population are assigned to a facility located at a distance of 1 unit away and half are assigned to a facility located 100 units away as illustrated in Figure \ref{fig:split_demands}.

\begin{figure}[ht]
    \centering
   
    \includegraphics[width=0.8\textwidth]{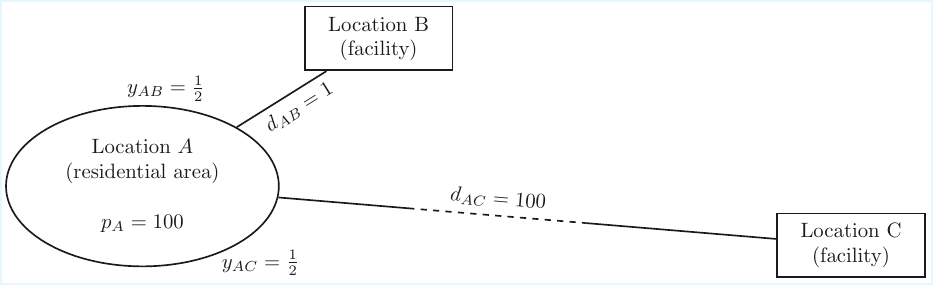}
    \caption{Illustration of the distribution in Example \ref{ex:KPsplitdemands}.}
    \label{fig:split_demands}
\end{figure}

For this distribution, we compute $$\alpha =\frac{(50\times1)+(50\times 100)}{(50\times1^2)+(50\times 100^2)} \approx 0.0101, $$ and setting $\epsilon = -1$, obtain $\kappa = -0.0101.$ 

Using these values, we substitute into expression \eqref{eq:KP}, and calculate the Kolm-Pollack score of the distribution to be 
$$-\frac{1}{\kappa}\ln\left[\frac{1}{100}\times 100 \times e^{-\kappa\Big (\big ( \frac{1}{2}\times 1 \big ) + \big(\frac{1}{2} \times 100\big )\Big )}\right] = 50.5. $$
Observe that the value obtained is precisely the mean of the distribution.  Moreover, this occurs because the model assumes that each person residing at location A travels the same distance to reach a facility, at least on average.  As there is only one residential area in this simple example much of the expression cancels, and the Kolm Pollak score reduces to the mean. This is expected because with only one residential area under consideration, there can be no inequality between different residential areas, and inequality within the same residential area is not accounted for when using expression \eqref{eq:KP}.

In contrast, substituting into equation \eqref{eq:KPalt}, we obtain a value strictly greater than the mean, accounting for the inequality present within our single residential area:
$$-\frac{1}{\kappa}\ln\left[\frac{1}{100}\times 100\times \Bigg (  \left(\frac{1}{2}\times e^{-\kappa\times 1} \right)+ \left (\frac{1}{2}\times e^{-\kappa\times 100} \right )\Bigg )\right] \approx 62.39. $$
Using expression \eqref{eq:KPalt}, we are able to model the inequality in the proposed scenario by making the implicit assumption that 50 people travel a distance of 1 to reach a facility, and 50 people travel a distance of 100 to reach a facility. 
\end{example}

Note that (KPL) with the $y_{r,s}$ variables relaxed has the same structure as (KPL) except with many binary variables replaced by continuous variables, so we would expect its computational performance to be at least as good as the computational performance of (KPL).

\subsection{Incorporating Location Preferences Through Penalties}
\label{sec:penalty}
Note that for clarity of presentation, proofs of all theorems and propositions given in this section can be found in \ref{sec:app}.

It is rarely the case that all potential locations are equally suitable. For example, in the equitable polling locations application, early voting sites are open on many days leading up to an election and this can be disruptive to normal operations at the selected sites. When this model was used by advocacy groups in preparation for the 2024 United States presidential election, fire stations and churches were supplied as potential early voting sites in one Georgia county, but were only to be selected if their inclusion would be particularly impactful with respect to voter access \citep{Skipper2025}. Similarly, if an urban planning board for a particular city were selecting supermarket locations to improve food access, some potential locations would be less suitable than others due to land costs or other practical considerations.

In this section, we present a method for applying a facility-based fixed-charge penalty, $c_s$, in the same units as the $d_{s,r}$, so that penalized location $s \in S$ will be selected only if it improves the optimal Kolm-Pollak score by at least $c_s$ units. This model has two avenues by which error in the size of the distance penalty may be introduced. Careful analysis of both types of error leads to a modeling strategy aimed at keeping the penalty error low.

\subsubsection{Mathematical Formulation}

Suppose $U \subseteq S$ is a set of less-desirable potential locations. For each $s \in U$, let $c_s \geq 0$ represent the number of units of improvement in the Kolm-Pollak score that would need to be realized (versus an optimal score not using site $s$) to select location $s$. We want to incorporate these preferences in the (KPL) model by penalizing the optimal Kolm-Pollak score by $\sigma:=\sum_{s \in U} c_s x_s$ units. However, we cannot do this directly because the objective function, given by \eqref{kplpobj}, does not minimize the Kolm-Pollak score directly. The following result provides the value that the (KPL) linear objective function, $\overline{\mathcal{K}}$, must be penalized by for the associated Kolm-Pollak score, $\mathcal{K}$, to be penalized by $\sigma$ meters.
\begin{theorem} \label{thm:penalty}
Let $\overline{\mathcal{K}}$ represent the unpenalized objective value of (KPL). Let $\mathcal{K}$ represent the associated unpenalized Kolm-Pollak score: $\mathcal{K} = -\frac{1}{\kappa}\ln\left(\frac{1}{T}\overline{\mathcal{K}}\right)$. Then adding a penalty of,
\begin{equation}\label{eq:penalty}
\rho ~:=~ Te^{-\kappa \mathcal{K}}(e^{-\kappa \sigma} -1),
\end{equation}
to $\overline{\mathcal{K}}$ is equivalent to adding a penalty of $\sigma \geq 0$ units to $\mathcal{K}$.
\end{theorem}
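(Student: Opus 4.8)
The plan is to unwind the meaning of ``penalizing the Kolm-Pollak score by $\sigma$ meters'' and then solve algebraically for the corresponding penalty $\rho$ on the linear objective. Penalizing $\mathcal{K}$ by $\sigma$ meters means we want the penalized score to equal $\mathcal{K}+\sigma$; since the score is always recovered from the linear objective via the fixed transformation $\mathcal{K} = -\frac{1}{\kappa}\ln\!\left(\frac{1}{T}\overline{\mathcal{K}}\right)$, the penalty $\rho$ added to $\overline{\mathcal{K}}$ must be exactly the amount for which applying this \emph{same} transformation to $\overline{\mathcal{K}}+\rho$ returns $\mathcal{K}+\sigma$. Establishing this correspondence, and hence solving for $\rho$, is the entire content of the statement.

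First I would invert the defining relation to express the linear objective in terms of the score. Exponentiating $\mathcal{K} = -\frac{1}{\kappa}\ln\!\left(\frac{1}{T}\overline{\mathcal{K}}\right)$ gives $e^{-\kappa\mathcal{K}} = \frac{1}{T}\overline{\mathcal{K}}$, hence
\[
\overline{\mathcal{K}} ~=~ T e^{-\kappa\mathcal{K}}.
\]
This identity is the workhorse of the proof. Next I would impose the requirement that the penalized objective map to the penalized score under the same transformation, namely
\[
\mathcal{K}+\sigma ~=~ -\frac{1}{\kappa}\ln\!\left(\frac{1}{T}\left(\overline{\mathcal{K}}+\rho\right)\right).
\]
Exponentiating and rearranging yields $\overline{\mathcal{K}}+\rho = T e^{-\kappa(\mathcal{K}+\sigma)} = T e^{-\kappa\mathcal{K}} e^{-\kappa\sigma}$. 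Solving for $\rho$ and substituting $\overline{\mathcal{K}} = T e^{-\kappa\mathcal{K}}$ from the previous step gives
\[
\rho ~=~ T e^{-\kappa\mathcal{K}} e^{-\kappa\sigma} - T e^{-\kappa\mathcal{K}} ~=~ T e^{-\kappa\mathcal{K}}\left(e^{-\kappa\sigma}-1\right),
\]
which is exactly \eqref{eq:penalty}.

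The computation itself is routine; the only genuine subtlety is pinning down the interpretation of ``equivalent,'' and this is the step I would state most carefully. Because $\kappa<0$ in our setting and the logarithm is increasing, the map $\overline{\mathcal{K}}\mapsto -\frac{1}{\kappa}\ln\!\left(\frac{1}{T}\overline{\mathcal{K}}\right)$ is a strictly increasing bijection between objective values and scores. This is what guarantees that adding $\rho$ to the objective and adding $\sigma$ to the score are genuinely the same operation—so that minimizing the penalized linear objective is equivalent to minimizing the penalized Kolm-Pollak score—rather than merely producing matching values at a single point. As a concluding sanity check I would note that $\rho\ge 0$ whenever $\sigma\ge 0$: since $\kappa<0$ we have $-\kappa\sigma\ge 0$, so $e^{-\kappa\sigma}\ge 1$, and the remaining factors $T$ and $e^{-\kappa\mathcal{K}}$ are positive, confirming that $\rho$ truly acts as a penalty. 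Finally, because $\rho$ depends only on the constants $T$, $\kappa$, the baseline score $\mathcal{K}$, and $\sigma$, no new nonlinearity is introduced into the model.
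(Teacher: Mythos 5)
Your proof is correct and takes essentially the same approach as the paper: both rest on the identity $\overline{\mathcal{K}} = Te^{-\kappa\mathcal{K}}$ and the same algebra, with the only difference being that you derive $\rho$ by solving the requirement $\mathcal{K}+\sigma = -\frac{1}{\kappa}\ln\left(\frac{1}{T}(\overline{\mathcal{K}}+\rho)\right)$, whereas the paper substitutes the stated $\rho$ and verifies this equality. Your added remarks on monotonicity and $\rho \geq 0$ are sound but not needed beyond what the paper records.
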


Now we have the following model for penalizing less-desirable potential service locations.  The new variable $v$ captures the nonlinear portion of the penalty via the pressure of the minimizing objective:
\begin{align}
\text{(KPL$^p$)}\quad\quad \text{minimize} \hspace{0.5cm}& \overline{\mathcal{K}}^p(\mathbf{x},\mathbf{y}) 
  ~:=~   \overline{\mathcal{K}}(\mathbf{y})
 + Te^{-\kappa \hat{\mathcal{K}}}(v-1), \label{obj:penalized} \\
\text{subject to} ~~~&v ~\geq~ e^{q}; \label{cons:nl_penalty}\\
& q ~=~ -\kappa \sum_{s \in U} c_s x_s; \label{cons:q} \\
& \eqref{numfacilities} - \eqref{ybinary}. \nonumber
\end{align}
In the objective function \eqref{obj:penalized}, $\hat{\mathcal{K}}$ is a parameter approximating $\mathcal{K}^*$, the unpenalized Kolm-Pollak score associated with an optimal solution to (KPL$^p$):  $\mathcal{K}^* := -\frac{1}{\kappa}\ln\left(\frac{1}{T}\overline{\mathcal{K}}(\mathbf{y}^*)\right)$, where $(\mathbf{x}^*, \mathbf{y}^*)$ is optimal to (KPL$^p$). Moreover, to use a linear optimization solver, we must linearize the one-dimensional exponential function in constraint $\eqref{cons:nl_penalty}$. Let $\sigma^{max}$ represent the largest possible value that $\sigma = \sum_{s\in U} c_s x_s$ can take for a feasible solution, $(\mathbf{x},\mathbf{y})$, to (KPL$^p$). We can accomplish the linearization by replacing constraint \eqref{cons:nl_penalty} with a set of lower-bounding tangent lines constructed at points $(q, e^q)$, for $q \in \boldsymbol{\beta} = \{\beta_0, \beta_1, \dots, \beta_n\}$, where $0=\beta_0 < \beta_1 < \dots < \beta_{n-1} < -\kappa\sigma^{max} \leq \beta_n$, as follows:
\begin{align}
  \text{(KPL$^t$)} \quad\quad \text{minimize} \hspace{0.5cm}& \overline{\mathcal{K}}^p(\mathbf{x},\mathbf{y}) 
  ~:=~   \overline{\mathcal{K}}(\mathbf{y})
 + Te^{-\kappa \hat{\mathcal{K}}}(v-1), \tag{\ref{obj:penalized}} \\
\text{subject to} ~~~&v ~\geq~ e^{\beta_i} + e^{\beta_i}(q-\beta_i), ~~~\forall~i \in \{0,1,\dots,n\}; \label{cons:tangent_lines}\\
& q ~=~ -\kappa \sum_{s \in U} c_s x_s; \tag{\ref{cons:q}} \\
& \eqref{numfacilities} - \eqref{ybinary}. \nonumber
\end{align}

The accuracy of the penalty term depends both on the quality of the approximation $\hat{\mathcal{K}}$ in the objective function \eqref{obj:penalized} and on the linearization points, $\boldsymbol{\beta}$, chosen for constraint \eqref{cons:tangent_lines}. In the rest of this section, we analyze the error to the penalty term introduced by each of these approximations, which leads to guidance on choosing $\hat{\mathcal{K}}$ and $\boldsymbol{\beta}$ to keep this error low. 

\subsubsection{Error Analysis and Implementation Guidelines}
First we consider the {\bf penalty error introduced by parameter approximation}.  Let $\sigma^* := \sum_{s \in U} c_s x_s^* \geq 0$ represent the penalty associated with stores selected by optimal solution $(\mathbf{x}^*, \mathbf{y}^*)$ to (KPL$^p$). Let $\hat{\sigma}$ represent the actual penalty (approximating $\sigma^*$) that is applied to an optimal Kolm-Pollak score by (KPL$^p$). Let $\Delta := \hat{\mathcal{K}} - \mathcal{K}^*$ represent the signed error in approximating $\mathcal{K}^*$ by $\hat{\mathcal{K}}$. 

\begin{theorem} \label{thm:penalty_approx_error}
The error in the penalty applied to an optimal Kolm-Pollak score by (KPL$^p$) is,
\begin{equation}
\hat{E}(\Delta,\sigma^*) 
~:=~ \hat{\sigma} - \sigma^* 
~=~ -\frac{1}{\kappa}\ln\left(e^{\kappa\sigma^*}+e^{-\kappa\Delta}(1-e^{\kappa\sigma^*})\right).
\end{equation}
\end{theorem}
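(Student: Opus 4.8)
The plan is to track exactly what penalty the model (KPL$^p$) applies to the linear objective $\overline{\mathcal{K}}$ and then translate that into the penalty it induces on the Kolm-Pollak score, leveraging Theorem \ref{thm:penalty}. First I would argue that at an optimal solution $(\mathbf{x}^*, \mathbf{y}^*, v^*)$ the lower-bounding constraint \eqref{cons:nl_penalty} binds: since $T > 0$ and $e^{-\kappa\hat{\mathcal{K}}} > 0$, the coefficient of $v$ in the objective \eqref{obj:penalized} is strictly positive, so the minimization drives $v$ down to its bound, and combined with \eqref{cons:q} this gives $v^* = e^{q^*} = e^{-\kappa\sigma^*}$. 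Consequently the quantity actually added to $\overline{\mathcal{K}}(\mathbf{y}^*)$ is $\rho_{\mathrm{app}} := Te^{-\kappa\hat{\mathcal{K}}}(e^{-\kappa\sigma^*} - 1)$, which differs from the penalty prescribed by Theorem \ref{thm:penalty} only in that the true score $\mathcal{K}^*$ has been replaced by its approximation $\hat{\mathcal{K}}$.

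Next I would apply Theorem \ref{thm:penalty} in the reverse direction. That theorem states that adding $Te^{-\kappa\mathcal{K}^*}(e^{-\kappa\sigma} - 1)$ to $\overline{\mathcal{K}}$ penalizes the Kolm-Pollak score by exactly $\sigma$ meters. By definition $\hat{\sigma}$ is the penalty that $\rho_{\mathrm{app}}$ actually induces on $\mathcal{K}^*$, so it is characterized by the single equation $Te^{-\kappa\mathcal{K}^*}(e^{-\kappa\hat{\sigma}} - 1) = \rho_{\mathrm{app}}$. Substituting $\hat{\mathcal{K}} = \mathcal{K}^* + \Delta$ and dividing through by $Te^{-\kappa\mathcal{K}^*}$ collapses the $\mathcal{K}^*$ dependence and yields $e^{-\kappa\hat{\sigma}} = 1 + e^{-\kappa\Delta}(e^{-\kappa\sigma^*} - 1)$, hence $\hat{\sigma} = -\frac{1}{\kappa}\ln\!\left(1 + e^{-\kappa\Delta}(e^{-\kappa\sigma^*} - 1)\right)$.

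Finally I would form $\hat{E} = \hat{\sigma} - \sigma^*$ and simplify. Writing $\sigma^* = -\frac{1}{\kappa}\ln(e^{-\kappa\sigma^*})$ so that both terms carry the common factor $-\frac{1}{\kappa}$, I would combine the two logarithms into a single quotient and clear the denominator $e^{-\kappa\sigma^*}$; the cross term simplifies via $e^{-\kappa\sigma^*}e^{\kappa\sigma^*} = 1$, producing precisely $-\frac{1}{\kappa}\ln\!\left(e^{\kappa\sigma^*} + e^{-\kappa\Delta}(1 - e^{\kappa\sigma^*})\right)$, which is the claimed expression.

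I expect the only genuinely delicate step to be the conceptual one in the first two paragraphs: correctly recognizing that the model evaluates the penalty coefficient at the approximation $\hat{\mathcal{K}}$ rather than at the true optimal score $\mathcal{K}^*$, which is the entire source of the error $\hat{E}$, and then reading Theorem \ref{thm:penalty} backwards to solve for the induced penalty $\hat{\sigma}$. Once that substitution and characterization are pinned down, everything reduces to routine logarithm and exponent algebra.
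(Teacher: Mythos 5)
Your proposal is correct and takes essentially the same route as the paper: both identify the quantity actually added to the linear objective as $Te^{-\kappa\hat{\mathcal{K}}}(e^{-\kappa\sigma^*}-1)$, use $\overline{\mathcal{K}}^* = Te^{-\kappa\mathcal{K}^*}$, and convert back to Kolm-Pollak units via the same logarithm algebra; the only organizational difference is that you solve explicitly for $\hat{\sigma}$ first (by reading Theorem \ref{thm:penalty} in reverse) and then subtract $\sigma^*$, whereas the paper manipulates $\mathcal{K}^* + \hat{\sigma}$ in a single chain. Your explicit argument that constraint \eqref{cons:nl_penalty} binds at optimality (so $v^* = e^{-\kappa\sigma^*}$) is a detail the paper leaves implicit, and it is a welcome addition rather than a change of approach.
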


The following result provides some useful properties of the penalty approximation error, $\hat{E}(\Delta, \sigma^*)$.

\begin{theorem} \label{thm:penalty_approx_error_bound}
Suppose $\sigma^* > 0$ is fixed:
\begin{enumerate}[label=(\arabic*)]
\item $\hat{E}(0,\sigma^*) = 0$ and $|\hat{E}(\Delta,\sigma^*)|$ is increasing in $|\Delta|$.
\item If $\hat{\mathcal{K}} > \mathcal{K}^*$ then $\hat{\sigma} > \sigma^*$ (locations are over-penalized) and 
\[
0
< |\Delta|(1-e^{\kappa \sigma^*}) 
< |\hat{E}(\Delta,\sigma^*)|
< |\Delta|.
\]
\item If $\hat{\mathcal{K}} < \mathcal{K}^*$ then $\hat{\sigma} < \sigma^*$ (locations are under-penalized) and 
\[
0
< |\hat{E}(\Delta,\sigma^*)|
< \text{min}\left\{|\Delta|(1-e^{\kappa \sigma^*}),\sigma^*\right\}.
\]
\end{enumerate}
Suppose $\Delta \in \mathbb{R}$ is fixed:
\begin{enumerate}[resume*]
\item $\hat{E}(\Delta, 0) = 0$ and $|\hat{E}(\Delta, \sigma^*)|$ is increasing in $\sigma^*$.
\end{enumerate}
\end{theorem}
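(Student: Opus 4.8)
The plan is to work directly from the closed form established in Theorem~\ref{thm:penalty_approx_error}, writing $\hat{E}(\Delta,\sigma^*) = -\frac{1}{\kappa}\ln g$ with the abbreviation $g = g(\Delta,\sigma^*) := e^{\kappa\sigma^*}+e^{-\kappa\Delta}(1-e^{\kappa\sigma^*})$. Everything hinges on the standing sign convention $\kappa = \alpha\epsilon < 0$: for fixed $\sigma^* > 0$ this gives $e^{\kappa\sigma^*} \in (0,1)$, hence $1 - e^{\kappa\sigma^*} > 0$, and $g$ is then a sum of two positive terms, so $g > 0$ and the logarithm is well defined. The two ``anchor'' identities $\hat{E}(0,\sigma^*) = 0$ (in (1)) and $\hat{E}(\Delta,0) = 0$ (in (4)) I would dispatch immediately by substitution: at $\Delta = 0$ one gets $g = e^{\kappa\sigma^*} + (1-e^{\kappa\sigma^*}) = 1$, and at $\sigma^* = 0$ one gets $g = 1 + e^{-\kappa\Delta}\cdot 0 = 1$, so $\ln g = 0$ in both cases.

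For the monotonicity in $\Delta$ in (1), and the signs of $\hat{\sigma}-\sigma^*$ asserted in (2) and (3), I would differentiate. A short computation gives
\[
\frac{\partial \hat{E}}{\partial \Delta} = \frac{e^{-\kappa\Delta}\,(1-e^{\kappa\sigma^*})}{g} > 0
\]
for every $\Delta$ when $\sigma^*>0$, so $\hat{E}$ is strictly increasing in $\Delta$ and passes through $0$ at $\Delta = 0$. This simultaneously shows $|\hat{E}|$ is increasing in $|\Delta|$, and that $\Delta > 0$ forces $\hat{E} > 0$ (over-penalization, $\hat{\sigma} > \sigma^*$) while $\Delta < 0$ forces $\hat{E} < 0$ (under-penalization), matching the verbal claims in (2) and (3).

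The two-sided bounds in (2) and (3) are the crux. The cleanest route to the factor-$(1-e^{\kappa\sigma^*})$ bounds is to study the auxiliary function $h(\Delta) := \hat{E}(\Delta,\sigma^*) - \Delta(1-e^{\kappa\sigma^*})$, for which $h(0)=0$ and
\[
h'(\Delta) = (1-e^{\kappa\sigma^*})\,\frac{e^{\kappa\sigma^*}\bigl(e^{-\kappa\Delta}-1\bigr)}{g}.
\]
Since $\kappa<0$, the sign of $e^{-\kappa\Delta}-1$ matches the sign of $\Delta$, so $h$ has a strict global minimum at $\Delta = 0$; hence $h(\Delta) > 0$ for all $\Delta \neq 0$. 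For $\Delta > 0$ this reads $\hat{E} > \Delta(1-e^{\kappa\sigma^*})$, the lower bound in (2); for $\Delta < 0$ it reads $\hat{E} > \Delta(1-e^{\kappa\sigma^*})$, which after taking absolute values (both sides being negative) is exactly $|\hat{E}| < |\Delta|(1-e^{\kappa\sigma^*})$, one of the two bounds in (3). The remaining bounds are purely algebraic. The upper bound $\hat{E} < \Delta$ in (2) is equivalent, after multiplying by $-\kappa>0$ and exponentiating, to $g < e^{-\kappa\Delta}$, which collapses to $1 < e^{-\kappa\Delta}$ and holds because $\Delta>0$. The bound $|\hat{E}| < \sigma^*$ in (3) is equivalent to $g > e^{\kappa\sigma^*}$, i.e.\ to $e^{-\kappa\Delta}(1-e^{\kappa\sigma^*}) > 0$, which is immediate.

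Finally, for claim (4) I would differentiate in $\sigma^*$, obtaining
\[
\frac{\partial \hat{E}}{\partial \sigma^*} = -\frac{e^{\kappa\sigma^*}\bigl(1-e^{-\kappa\Delta}\bigr)}{g},
\]
whose sign is opposite to that of $1-e^{-\kappa\Delta}$, hence positive when $\Delta>0$ and negative when $\Delta<0$. In either regime this drives $\hat{E}$ monotonically away from $0$ as $\sigma^*$ grows, so $|\hat{E}|$ is increasing in $\sigma^*$ (with the degenerate case $\Delta=0$ giving $\hat{E}\equiv 0$). I expect the main obstacle to be bookkeeping the many sign flips forced by $\kappa<0$ while keeping the two-sided estimates tight; the idea that makes this tractable is recognizing that the auxiliary function $h$ is monotone on each side of $\Delta=0$, which delivers the $(1-e^{\kappa\sigma^*})$ bounds directly, with no crude estimation of the logarithm.
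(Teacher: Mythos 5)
Your proof is correct, and its overall strategy matches the paper's: both work from the closed form in Theorem \ref{thm:penalty_approx_error}, differentiate in $\Delta$ for claims (1)--(3) and in $\sigma^*$ for claim (4), and your derivative formulas agree exactly with those in the paper (yours are just written compactly in terms of $g$). Two sub-steps differ, both mildly in your favor. First, for the lower bound $\hat{E} \geq \Delta(1-e^{\kappa\sigma^*})$, the paper computes the second derivative $\frac{\partial^2}{\partial \Delta^2}\hat{E} > 0$ and invokes convexity to place $\hat{E}$ above its tangent line at $\Delta = 0$; you instead show that $h(\Delta) = \hat{E}(\Delta,\sigma^*) - \Delta(1-e^{\kappa\sigma^*})$ satisfies $h(0)=0$ with $h'$ having the sign of $\Delta$, so $h$ has a strict global minimum at $0$. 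This is the same geometric fact, but your route needs only first derivatives and avoids the paper's somewhat error-prone second-derivative computation. Second, for the bound $|\hat{E}(\Delta,\sigma^*)| < \sigma^*$ when $\Delta < 0$, the paper argues via monotonicity together with the asymptotic limit $\lim_{\Delta \to -\infty}\hat{E}(\Delta,\sigma^*) = -\sigma^*$ (which it asserts without proof); you reduce the claim algebraically to $g > e^{\kappa\sigma^*}$, i.e.\ to $e^{-\kappa\Delta}(1-e^{\kappa\sigma^*})>0$, which is immediate and arguably more self-contained. Everything else --- the anchor identities, the sign analysis giving over-/under-penalization, the upper bound $\hat{E} < \Delta$ in case (2), and the $\sigma^*$-monotonicity in (4) --- tracks the paper's argument step for step.
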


\begin{remark}
Some consequences of Theorem \ref{thm:penalty_approx_error_bound} are as follows:
\begin{itemize}
\item The bounds in the $\Delta > 0$ case imply that $\hat{E}(\Delta,\sigma^*)$ is unbounded as $\Delta$ approaches infinity. In fact, it can be shown that $\hat{E}(\Delta,\sigma^*)$ approaches $\Delta - \sigma^* - \frac{1}{\kappa}(e^{-\kappa \sigma^*} -1)$ asymptotically as $\Delta$ approaches infinity. 
\item The upper bound on the size of the error in the $\Delta < 0$ case implies that there will never be a negative penalty (i.e., a ``bonus''), for selecting an undesireable location.
\end{itemize}
\end{remark}

There are two natural choices for $\hat{\mathcal{K}}$: $\mathcal{K}^{all}$ and $\mathcal{K}^{rem}$, as defined below. 

\begin{definition}
Let (KPL)$^{all}$ denote the unpenalized model, (KPL), with all potential locations from (KPL$^p$) included. Let (KPL)$^{rem}$ denote (KPL) with all penalized potential locations removed from consideration, and assume that the there are enough remaining locations (at least $k$) so that the model is feasible. Suppose $(\mathbf{x}^*, \mathbf{y}^*)$ is optimal to (KPL)$^{all}$ and define $\mathcal{K}^{all} := -\frac{1}{\kappa}\ln\left(\overline{\mathcal{K}}(\mathbf{y}^*)\right)$. Define $\mathcal{K}^{rem}$ similarly using an optimal solution to (KPL)$^{rem}$.
\end{definition}

\begin{proposition} \label{prop:Khats}
Assuming the same value of the parameter $\alpha$ is used in (KPL)$^{all}$, (KPL)$^{rem}$, and (KPL$^p$), 
\[ \mathcal{K}^{all} ~\leq~ \mathcal{K}^* ~\leq~ \mathcal{K}^{rem}.
\]
\end{proposition}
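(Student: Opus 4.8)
The plan is to prove the two inequalities separately, exploiting that $\mathcal{K}^{all}$, $\mathcal{K}^*$, and $\mathcal{K}^{rem}$ are all \emph{unpenalized} Kolm-Pollak scores, each recovered from a corresponding linear objective value $\overline{\mathcal{K}}$ through the transformation $\overline{\mathcal{K}}\mapsto-\frac{1}{\kappa}\ln\!\left(\frac{1}{T}\overline{\mathcal{K}}\right)$. Since $\kappa<0$, this map is strictly increasing in $\overline{\mathcal{K}}$, so each inequality between scores reduces to a comparison of the underlying $\overline{\mathcal{K}}$ values in the same direction. Throughout I would write $(\mathbf{x}^*,\mathbf{y}^*)$ for an optimal solution to (KPL$^p$) and $(\mathbf{x}^{all},\mathbf{y}^{all})$, $(\mathbf{x}^{rem},\mathbf{y}^{rem})$ for optimal solutions to (KPL)$^{all}$ and (KPL)$^{rem}$.

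For the lower bound $\mathcal{K}^{all}\le\mathcal{K}^*$, I would first note that, projected onto the $(\mathbf{x},\mathbf{y})$ variables, the feasible region of (KPL$^p$) coincides with that of (KPL)$^{all}$, since the auxiliary variables $q$ and $v$ in \eqref{cons:nl_penalty}--\eqref{cons:q} can always be set consistently. Hence $(\mathbf{x}^*,\mathbf{y}^*)$ is feasible for (KPL)$^{all}$, and because (KPL)$^{all}$ minimizes the unpenalized objective $\overline{\mathcal{K}}$ over precisely this region, $\overline{\mathcal{K}}(\mathbf{y}^{all})\le\overline{\mathcal{K}}(\mathbf{y}^*)$. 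Applying the monotone map yields $\mathcal{K}^{all}\le\mathcal{K}^*$.

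For the upper bound $\mathcal{K}^*\le\mathcal{K}^{rem}$, the key is that $(\mathbf{x}^{rem},\mathbf{y}^{rem})$ uses no location in $U$, so $\sigma=\sum_{s\in U}c_sx^{rem}_s=0$; extended by $q=0$ and $v=1$ it is feasible for (KPL$^p$) with a vanishing penalty term, so its penalized objective equals $\overline{\mathcal{K}}(\mathbf{y}^{rem})$. Optimality of $(\mathbf{x}^*,\mathbf{y}^*)$ then gives $\overline{\mathcal{K}}(\mathbf{y}^*)+Te^{-\kappa\hat{\mathcal{K}}}(e^{-\kappa\sigma^*}-1)\le\overline{\mathcal{K}}(\mathbf{y}^{rem})$, where the minimizing pressure forces $v^*=e^{-\kappa\sigma^*}$. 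The decisive observation is that this penalty term is nonnegative: $\kappa<0$ and $\sigma^*\ge0$ give $e^{-\kappa\sigma^*}\ge1$, while $Te^{-\kappa\hat{\mathcal{K}}}>0$. Dropping the nonnegative term leaves $\overline{\mathcal{K}}(\mathbf{y}^*)\le\overline{\mathcal{K}}(\mathbf{y}^{rem})$, and the monotone map gives $\mathcal{K}^*\le\mathcal{K}^{rem}$.

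I expect the second inequality to be the main obstacle, since one must resist conflating the penalized objective that (KPL$^p$) actually minimizes with the unpenalized score $\mathcal{K}^*$ being bounded. The argument hinges on two facts that are easy to overlook but routine to check once the signs are tracked: the availability of a penalty-free comparison point (any solution avoiding $U$, supplied by (KPL)$^{rem}$) and the nonnegativity of the penalty added to $\overline{\mathcal{K}}$, which holds precisely because $\kappa<0$. No delicate estimation beyond these sign checks is required.
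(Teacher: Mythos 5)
Your proof is correct and follows essentially the same route as the paper's: the first inequality comes from the feasibility of the (KPL$^p$) optimum in (KPL)$^{all}$, and the second from using the (KPL)$^{rem}$ optimum as a penalty-free feasible comparison point in (KPL$^p$). The only difference is that you make explicit two steps the paper leaves implicit — the monotonicity of $\overline{\mathcal{K}} \mapsto -\frac{1}{\kappa}\ln\bigl(\frac{1}{T}\overline{\mathcal{K}}\bigr)$ and the nonnegativity of the penalty term $Te^{-\kappa\hat{\mathcal{K}}}(e^{-\kappa\sigma^*}-1)$, which is indeed what licenses dropping it to compare unpenalized objectives.
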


According to Theorem \ref{thm:penalty_approx_error_bound}, the lower bounding approximation, $\mathcal{K}^{all}$, is likely to be the better choice for $\hat{\mathcal{K}}$. In this case, we can compute bounds on $|\hat{E}(\Delta,\sigma^*)|$ for various possible values of $\sigma^*$ that are independent of $\Delta$. We require the following notation.

\begin{definition}
Let $\sigma^{all} := \sum_{s \in U} c_s x^{all}_s$ for an optimal solution $(\mathbf{x}^{all}, \mathbf{y}^{all})$ to $(KPL)^{all}$. 
\end{definition}

\begin{theorem} \label{thm:more_bounds}
Suppose $\hat{\mathcal{K}} = \mathcal{K}^{all}$ in (KPL$^p$). Then undesirable locations are under-penalized, and
\[
\sigma^* - \hat{\sigma}
~\leq~ |\Delta|(1-e^{\kappa \sigma^*}) 
~\leq~ \sigma^{all}(1-e^{\kappa \sigma^*}) 
~\leq~ \sigma^{all}(1-e^{\kappa \sigma^{all}}).
\]
\end{theorem}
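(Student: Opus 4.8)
The plan is to verify the four-term chain one inequality at a time, reading left to right. The leftmost inequality comes for free from the error bounds already in hand, while the two middle inequalities both follow from a single optimality comparison between the optimizer of (KPL$^p$) and the optimizer of (KPL)$^{all}$, exploiting the special role that the choice $\hat{\mathcal{K}} = \mathcal{K}^{all}$ plays.

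First I would record the regime. Since $\hat{\mathcal{K}} = \mathcal{K}^{all}$, Proposition \ref{prop:Khats} gives $\hat{\mathcal{K}} \leq \mathcal{K}^*$, so $\Delta = \hat{\mathcal{K}} - \mathcal{K}^* \leq 0$. This is exactly the under-penalized case of Theorem \ref{thm:penalty_approx_error_bound}(3), which immediately yields both the theorem's first assertion ($\hat{\sigma} \leq \sigma^*$, i.e.\ locations are under-penalized) and the leftmost inequality: part (3) states $|\hat{E}(\Delta,\sigma^*)| < \min\{|\Delta|(1-e^{\kappa\sigma^*}),\sigma^*\}$, and since $\sigma^* - \hat{\sigma} = -\hat{E}(\Delta,\sigma^*) = |\hat{E}(\Delta,\sigma^*)|$ in this regime, we get $\sigma^* - \hat{\sigma} \leq |\Delta|(1-e^{\kappa\sigma^*})$ (with equality in the degenerate cases $\Delta = 0$ or $\sigma^* = 0$).

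The two middle inequalities reduce to the pair of scalar facts $|\Delta| \leq \sigma^{all}$ and $\sigma^* \leq \sigma^{all}$: because $\kappa < 0$ forces $1 - e^{\kappa\sigma^*} \geq 0$ and $\sigma^{all} \geq 0$, the second inequality holds once $|\Delta| \leq \sigma^{all}$, and the third holds once $\sigma^* \leq \sigma^{all}$ (equivalently $e^{\kappa\sigma^{all}} \leq e^{\kappa\sigma^*}$). I would obtain both from one optimality comparison. Set $a := Te^{-\kappa\hat{\mathcal{K}}}$, the coefficient of the penalty term in \eqref{obj:penalized}; the key observation is that $\hat{\mathcal{K}} = \mathcal{K}^{all}$ forces $a = \overline{\mathcal{K}}(\mathbf{y}^{all})$, which is the global minimum of the unpenalized proxy $\overline{\mathcal{K}}$ over the common feasible region. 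Evaluating \eqref{obj:penalized} at the feasible point $(\mathbf{x}^{all},\mathbf{y}^{all})$ --- where the penalty variable sits at $v = e^{-\kappa\sigma^{all}}$ --- gives objective value $a\,e^{-\kappa\sigma^{all}}$, so optimality of $(\mathbf{x}^*,\mathbf{y}^*)$ yields $\overline{\mathcal{K}}(\mathbf{y}^*) + a(e^{-\kappa\sigma^*}-1) \leq a\,e^{-\kappa\sigma^{all}}$.

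From this one line both bounds drop out. Writing $\overline{\mathcal{K}}(\mathbf{y}^*) = Te^{-\kappa\mathcal{K}^*}$ and discarding the nonnegative term $a(e^{-\kappa\sigma^*}-1)$ gives $Te^{-\kappa\mathcal{K}^*} \leq Te^{-\kappa(\mathcal{K}^{all}+\sigma^{all})}$, which, since $t \mapsto e^{-\kappa t}$ is increasing, is equivalent to $\mathcal{K}^* \leq \mathcal{K}^{all} + \sigma^{all}$, i.e.\ $|\Delta| = \mathcal{K}^* - \mathcal{K}^{all} \leq \sigma^{all}$. Instead discarding the nonnegative $\overline{\mathcal{K}}(\mathbf{y}^*) - a$ from the left (minimality of the proxy) leaves $a\,e^{-\kappa\sigma^*} \leq a\,e^{-\kappa\sigma^{all}}$, hence $\sigma^* \leq \sigma^{all}$. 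The main thing to get right --- and the only real obstacle --- is the observation that $\hat{\mathcal{K}} = \mathcal{K}^{all}$ makes the penalty coefficient $a$ coincide with the minimum of $\overline{\mathcal{K}}$ attained by $(\mathbf{x}^{all},\mathbf{y}^{all})$; once that is in place, a single use of optimality bounds $|\Delta|$ and $\sigma^*$ simultaneously by $\sigma^{all}$, and the rest is monotonicity of the exponential.
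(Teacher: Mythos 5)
Your proof is correct and follows essentially the same route as the paper's: the first inequality comes from Proposition \ref{prop:Khats} together with Theorem \ref{thm:penalty_approx_error_bound}, and both middle inequalities are reduced to $|\Delta| \leq \sigma^{all}$ and $\sigma^* \leq \sigma^{all}$ via the single optimality comparison of (KPL$^p$) against the feasible point $(\mathbf{x}^{all},\mathbf{y}^{all})$, which is exactly the paper's inequality $Te^{-\kappa(\mathcal{K}^*+\hat{\sigma})} \leq Te^{-\kappa(\mathcal{K}^{all}+\sigma^{all})}$ written in raw-objective form. The only real difference is cosmetic: where the paper derives $\sigma^* \leq \sigma^{all}$ from the algebraic bound $\hat{E}(\Delta,\sigma^*) \geq \Delta$ computed from the closed-form error expression, you instead invoke minimality of the unpenalized proxy, $\overline{\mathcal{K}}(\mathbf{y}^*) \geq \overline{\mathcal{K}}(\mathbf{y}^{all})$, and these two facts are equivalent under exponentiation of the score identities.
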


Next we analyze the {\bf penalty error introduced by linearization}, i.e., by approximating $e^q$, $q = -\kappa \sum_{s\in U} c_s x_s = -\kappa \sigma$, via lower-bounding tangent lines in (KPL$^t$). Let $\ddot{\sigma}$ represent the penalty applied to an optimal Kolm-Pollak score by (KPL$^t$). Thus, $\ddot{\sigma}$ is an under-approximation of $\hat{\sigma}$.

The constraints \eqref{cons:tangent_lines} and the minimizing objective function together approximate $e^q$ with the piecewise linear function, $g:[0,\sigma^{max}] \rightarrow \mathbb{R}$, defined as follows:
\[
g(q) := 
\begin{cases}
1 + q, & \text{for } q \in [0,q_{0,1}); \\
e^{\beta_i} + e^{\beta_i}(q-\beta_i), & \text{for } q \in [q_{i-1,i},q_{i,i+1}), i = 1,2,3,\dots,n-1; \\
e^{\beta_n} + e^{\beta_n}(q+\beta_n) & \text{for } q \in [q_{n-1,n},-\kappa\sigma^{max}], \\
\end{cases}
\]
where $q_{i,j}$ represents the value of $q$ for which $e^{\beta_i} + e^{\beta_i}(q-\beta_i) = e^{\beta_{j}} + e^{\beta_{j}}(q-\beta_{j}).$

In practice, the penalties, $c_s$, would often be the same, or would only take on a few different values. If the penalties are all the same (i.e., $c_s = c, \forall s \in U$), we can choose linearization points so that (KPL$^t$) is an exact formulation of (KPL$^p$).

\begin{proposition} \label{prop:no_exp_error}
If $c_s = c$ for all $s \in U$, and $\beta_i := -\kappa c i$, for $i = 0, 1, \dots, \min\{k,|U|\}$, then $\ddot{\sigma} = \hat{\sigma}$.
\end{proposition}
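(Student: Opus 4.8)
The plan is to exploit the fact that, when all penalties are equal, the exponent $q = -\kappa\sigma$ can take only finitely many values, each of which coincides \emph{exactly} with one of the chosen linearization points $\beta_i$. Since a tangent line to the convex function $e^q$ touches the curve precisely at its point of tangency, the piecewise-linear lower approximation $g$ agrees with $e^q$ at every attainable value of $q$, so the linearization in (KPL$^t$) is lossless and the two models become equivalent on feasible solutions.

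First I would observe that $c_s = c$ for all $s \in U$ gives $\sigma = \sum_{s\in U} c_s x_s = c\,m$, where $m := \sum_{s \in U} x_s$ is the number of selected penalized locations. Because constraint \eqref{numfacilities} forces exactly $k$ open facilities and there are only $|U|$ penalized candidates, we have $m \leq \sum_{s\in S} x_s = k$ and $m \leq |U|$, so $m$ is an integer in $\{0,1,\dots,\min\{k,|U|\}\}$. Consequently $q = -\kappa\sigma = -\kappa c\,m$, and since the linearization points are $\beta_i = -\kappa c\,i$, every feasible value of $q$ satisfies $q = \beta_m$ for some admissible $m$; that is, $q$ always lands exactly on one of the tangent points. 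This also confirms that the index range $i \in \{0,\dots,\min\{k,|U|\}\}$ exhausts all attainable exponents up to $-\kappa\sigma^{max}$.

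Next I would evaluate the $i$-th tangent line, $e^{\beta_i} + e^{\beta_i}(q - \beta_i)$, at $q = \beta_i$: it equals $e^{\beta_i} = e^{q}$. Since $e^q$ is convex, all tangent lines lie weakly below the curve, so the upper envelope $g(q) = \max_i\{e^{\beta_i} + e^{\beta_i}(q-\beta_i)\}$ satisfies $g(\beta_i) = e^{\beta_i}$ at each tangent point. Combining this with the previous step, $g(q) = e^q$ for every $q$ attainable by a feasible solution. In both (KPL$^p$) and (KPL$^t$) the coefficient $Te^{-\kappa\hat{\mathcal{K}}}$ on $v$ is strictly positive, so the minimizing objective drives $v$ down to its lower bound---to $e^q$ in (KPL$^p$) and to $g(q)$ in (KPL$^t$). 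Because these bounds coincide on every feasible solution, each feasible $(\mathbf{x},\mathbf{y})$ yields the same penalized objective value in both models; hence (KPL$^t$) and (KPL$^p$) share the same optimal solutions and apply the same penalty, giving $\ddot{\sigma} = \hat{\sigma}$.

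The main obstacle is not the tangent-line algebra but verifying the combinatorial bound $m \le \min\{k,|U|\}$, which guarantees that the chosen $\beta_i$ genuinely cover the discrete set of attainable exponents and that $\beta_n \geq -\kappa\sigma^{max}$. Once this discreteness is established, the exactness of each tangent approximation at its own point of tangency makes the remaining equivalence routine.
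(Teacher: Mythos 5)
Your proposal is correct and follows essentially the same argument as the paper's proof: binary $x$ variables together with equal penalties force $q=-\kappa c\,m$ onto the chosen linearization points $\beta_m$, $m\le\min\{k,|U|\}$, where each tangent line matches $e^q$ exactly, so the linearization introduces no error. Your version merely spells out the details (the combinatorial bound on $m$, convexity of $e^q$, and the positive coefficient on $v$ driving it to its lower bound) that the paper leaves implicit.
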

When it is not efficient to choose a linearization point for every possible value of $q ~=~ -\kappa \sum_{s \in U} c_s x_s$ (i.e., when there are more than a few unique penalty values), we would like to choose linearization points $\beta_i$ so that the approximation error is small. Tangent lines do not closely approximate exponential functions in general (Proposition \ref{thm:max_ex_error} and Corollary \ref{cor:exponential_growth}). However, the linearization does provide a very close approximation in this application (Proposition \ref{thm:max_practical_exponent} and Theorem \ref{thm:penalty_error_tangent}). 

\begin{proposition} \label{thm:max_ex_error}
For $b\in \mathbb{R}$, let $L_b := e^b + e^b(x-b)$ represent the tangent line to $f(x) = e^x$ at the point $(b,e^b)$. Let $g: [a,a+w] \rightarrow \mathbb{R}$ be the piecewise linear function $g(x) := \max\{L_a(x), L_{a+w}(x)\}$. The maximum error between $e^x$ and $g(x)$ on $[a,a+w]$ is, 
\[
\max_{x \in [a,a+w]} e^x - g(x) = e^a\left(e^{\frac{we^w}{e^w-1}-1} - \frac{we^w}{e^w-1}\right).
\]
\end{proposition}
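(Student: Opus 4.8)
The plan is to exploit the convexity of $f(x)=e^x$. Since $f$ is strictly convex, each tangent line $L_a$ and $L_{a+w}$ lies strictly below the curve except at its own point of tangency, so their upper envelope $g=\max\{L_a,L_{a+w}\}$ also lies below $e^x$; consequently the error $e^x-g(x)$ is nonnegative and vanishes exactly at $x=a$ and $x=a+w$. The two tangent lines have distinct slopes $e^a<e^{a+w}$, so they cross exactly once. Evaluating at the endpoints, $L_a(a)-L_{a+w}(a)=e^a\left(1-e^w(1-w)\right)>0$ and $L_{a+w}(a+w)-L_a(a+w)=e^a\left(e^w-(1+w)\right)>0$ for $w>0$, so $g$ coincides with $L_a$ on the left part of $[a,a+w]$ and with $L_{a+w}$ on the right part, meeting at a single crossover point $x^*$.

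First I would locate the maximum. On the left piece the error is $e^x-L_a(x)$, with derivative $e^x-e^a>0$ for $x>a$, hence increasing; on the right piece the error is $e^x-L_{a+w}(x)$, with derivative $e^x-e^{a+w}<0$ for $x<a+w$, hence decreasing. Therefore the error rises to $x^*$ and then falls, so its maximum on $[a,a+w]$ is attained precisely at the crossover $x^*$, where $L_a(x^*)=L_{a+w}(x^*)$.

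Next I would compute $x^*$ and the value there. Writing $u:=x-a$, the equation $L_a=L_{a+w}$ reduces to $1+u=e^w(1+u-w)$, which solves to $u=\frac{we^w}{e^w-1}-1$, equivalently $1+u=\frac{we^w}{e^w-1}$. Substituting back gives $L_a(x^*)=e^a(1+u)=e^a\frac{we^w}{e^w-1}$ and $e^{x^*}=e^a e^u=e^a e^{\frac{we^w}{e^w-1}-1}$, so that the maximum error is $e^{x^*}-L_a(x^*)=e^a\left(e^{\frac{we^w}{e^w-1}-1}-\frac{we^w}{e^w-1}\right)$, matching the claimed formula. (Using $L_{a+w}(x^*)$ instead gives the same value, a useful consistency check.)

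The one step requiring genuine care — and the main obstacle — is verifying that $x^*$ lies in the open interval $(a,a+w)$, so that the maximum is interior and the piecewise description of $g$ is valid. This amounts to showing $0<u<w$ for $w>0$: the bound $u<w$ is equivalent to $e^w>1+w$, and $u>0$ is equivalent to $h(w):=e^w(w-1)+1>0$, which holds because $h(0)=0$ and $h'(w)=we^w>0$ for $w>0$. Pinning down these two elementary inequalities, together with the endpoint ordering of the tangent lines above, fixes the geometry and completes the argument.
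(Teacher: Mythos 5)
Your proof is correct and follows essentially the same route as the paper's: the maximum error is attained where the two tangent lines cross, and algebra at that crossover yields the stated formula; your additions (convexity of $e^x$, monotonicity of the error on each linear piece, and the check that $0<u<w$ so the crossover is interior) simply make rigorous what the paper leaves as a two-line assertion. One point worth flagging: your crossover $x^* = a + \frac{we^w}{e^w-1} - 1$ is the correct one---consistent with the final formula, since the error there is $e^a\left(e^{u} - (1+u)\right)$ with $1+u = \frac{we^w}{e^w-1}$---whereas the paper's proof states the intersection as $x = a+\frac{we^w}{e^w-1}$, which is off by $1$ (evidently a typo, as that point would lie outside $[a,a+w]$ for small $w$).
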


\begin{corollary} \label{cor:exponential_growth}
Let $g: [a,a+w] \rightarrow \mathbb{R}$ be the piecewise linear function $g(x) := \max\{L_a(x), L_{a+w}(x)\}$. For a fixed width, $w$, the maximum error between $e^x$ and $g(x)$ on $[a,a+w]$ grows exponentially in $a$.
\end{corollary}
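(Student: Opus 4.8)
The plan is to obtain the result essentially for free from the exact error formula already established in Proposition~\ref{thm:max_ex_error}. For a fixed width $w$, that proposition gives the maximum error as
\[
E(a) ~:=~ \max_{x \in [a,a+w]} e^x - g(x) ~=~ e^a\, C(w), \qquad C(w) ~:=~ e^{\frac{we^w}{e^w-1}-1} - \frac{we^w}{e^w-1},
\]
where the factor $C(w)$ depends only on $w$ and not on $a$. Thus the maximum error is nothing more than the constant $C(w)$ times $e^a$, and the entire content of the corollary reduces to checking that this constant is strictly positive: once $C(w) > 0$ is known, $E(a) = C(w)e^a$ is patently exponential in $a$.

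To establish positivity I would substitute $t := \frac{we^w}{e^w-1}$ so that $C(w) = e^{t-1} - t$, and apply the elementary bound $e^s \ge 1 + s$ (with equality only at $s = 0$) at $s = t - 1$. This yields $e^{t-1} \ge t$, with equality exactly when $t = 1$; hence it suffices to show $t > 1$ whenever $w > 0$. For this I would introduce $h(w) := e^w(w-1) + 1$, observe that $h(0) = 0$ and $h'(w) = w e^w > 0$ for $w > 0$, so that $h(w) > 0$, which rearranges to $we^w > e^w - 1$ and therefore to $t > 1$. Combining, $C(w) = e^{t-1} - t > 0$, and the corollary follows.

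The only real obstacle is extracting the \emph{strict} inequality from $e^{t-1} \ge t$: the bound itself is immediate, but strictness requires the auxiliary verification that $t > 1$, and one must be mindful of the degenerate limit $w \to 0^+$, in which $t \to 1$ and $C(w) \to 0$. This is harmless here, since the statement fixes a positive width $w > 0$ and the claimed exponential growth lives in the regime bounded away from that degeneracy.
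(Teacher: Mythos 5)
Your proof is correct and follows the same route the paper intends: the paper gives no explicit proof of Corollary~\ref{cor:exponential_growth}, treating it as immediate from the factorization $\max_{x\in[a,a+w]} e^x - g(x) = e^a A(w)$ in Proposition~\ref{thm:max_ex_error}, which is exactly your starting point. Your verification that the constant $A(w)$ is strictly positive for $w>0$ (via $t = \frac{we^w}{e^w-1} > 1$ and $e^{t-1} > t$) is a detail the paper leaves implicit, and it is a worthwhile addition since the claim of exponential growth would be vacuous if the constant vanished.
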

Thus, if $a$ is large, choosing $w$ small enough to have a reasonably small error between $e^x$ and its lower-bounding tangent lines will introduce numerical problems in a computational setting. However, in practical instances of (KPL$^q$), the argument of the exponential expression in the penalty expression, $q = -\kappa \sum_{s \in U} c_s x_s$, is bounded above by $|\epsilon|$, so that we can achieve a very close approximation to $e^q$ with a numerically stable choice of $w$. (Recall that $\epsilon$, the aversion to inequality parameter, is negative and typically takes values between $-\frac{1}{2}$ and $-2$.)
 
\begin{proposition} \label{thm:max_practical_exponent}
Let $D = \{d_r: r \in R\}$ be the distribution of distances that is used to calculate $\alpha$, and let $\mu_D$ represent the population weighted mean of $D$, $\mu_D := \frac{1}{T}\sum_{r \in R} p_r d_r$.
If $\sigma_{max} \leq \mu_D$, then $q = -\kappa \sum_{s\in U} c_s x_s \leq |\epsilon|$.
\end{proposition}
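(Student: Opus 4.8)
The plan is to reduce the desired bound on $q$ to a single clean inequality between $\alpha$ and $\mu_D$ that follows from a standard moment inequality. First I would observe that over all feasible $\mathbf{x}$ the penalty satisfies $\sigma = \sum_{s \in U} c_s x_s \leq \sigma^{max}$ by the definition of $\sigma^{max}$. Since $\kappa = \alpha\epsilon$ with $\epsilon < 0$, we have $-\kappa = \alpha|\epsilon| > 0$, so multiplying the inequality $\sigma \leq \sigma^{max}$ by $-\kappa$ preserves its direction and gives $q = -\kappa\sigma \leq -\kappa\sigma^{max} = \alpha|\epsilon|\sigma^{max}$. Hence it suffices to prove $\alpha|\epsilon|\sigma^{max} \leq |\epsilon|$, which (dividing by $|\epsilon| > 0$) is equivalent to $\alpha\sigma^{max} \leq 1$.

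Next, using the hypothesis $\sigma^{max} \leq \mu_D$ together with $\alpha > 0$, I would further reduce the target to $\alpha\mu_D \leq 1$. Substituting the definitions $\alpha = \frac{\sum_{r\in R} p_r d_r}{\sum_{r\in R} p_r d_r^2}$ and $\mu_D = \frac{1}{T}\sum_{r\in R} p_r d_r$ with $T = \sum_{r\in R} p_r$ yields
\[
\alpha\mu_D ~=~ \frac{\left(\sum_{r \in R} p_r d_r\right)^2}{T \sum_{r \in R} p_r d_r^2}.
\]
The claim $\alpha\mu_D \leq 1$ is therefore equivalent to $\left(\sum_{r\in R} p_r d_r\right)^2 \leq \left(\sum_{r\in R} p_r\right)\left(\sum_{r\in R} p_r d_r^2\right)$.

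This last inequality is exactly Cauchy--Schwarz applied to the vectors with components $\sqrt{p_r}$ and $\sqrt{p_r}\,d_r$; equivalently, it is the statement that the population-weighted variance of $D$ is nonnegative (Jensen's inequality for the convex map $x \mapsto x^2$). I would invoke either form to close the argument, thereby establishing $q \leq |\epsilon|$ for every feasible solution.

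The argument is essentially routine and contains no genuinely hard step. The only points requiring care are the sign bookkeeping---keeping straight that $\kappa < 0$, so that $-\kappa > 0$ and multiplying through by $-\kappa$ does not reverse the inequality $\sigma \leq \sigma^{max}$---and recognizing the final moment inequality as Cauchy--Schwarz (equivalently, nonnegativity of variance) rather than attempting a direct algebraic manipulation.
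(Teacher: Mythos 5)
Your proof is correct and follows essentially the same route as the paper: both chain $q = -\kappa\sigma \leq -\kappa\sigma^{max} \leq -\kappa\mu_D = |\epsilon|\,\alpha\mu_D$ and then bound $\alpha\mu_D = \frac{\left(\sum_{r\in R} p_r d_r\right)^2}{T\sum_{r\in R} p_r d_r^2} \leq 1$ via the Cauchy--Schwarz inequality. Your write-up merely makes explicit the vectors $\left(\sqrt{p_r}\right)$ and $\left(\sqrt{p_r}\,d_r\right)$ and the sign bookkeeping that the paper leaves implicit.
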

\begin{remark}
It is not guaranteed that the hypothesis of Proposition \ref{thm:max_practical_exponent} holds in general, but it is a reasonable assumption in practical instances of (KPL$^t$). The hypothesis holds if $\max_{s\in U} c_s \leq \frac{\mu_D}{\min\{k,|U|\}}$. In order for location $s$ to have a chance of being selected by the model, penalty $c_s$ must be much, much less than the anticipated optimal Kolm-Pollak score, which can be roughly approximated by $\mu_D$: $c_s << \mathcal{K}^* \approx \mu_D$. Moreover, it is likely that $k$, the number of locations to optimally place, and/or, $|U|$, the number of locations to penalize, are fairly small.
\end{remark}

Proposition \ref{thm:max_practical_exponent} indicates that the exponential expression $e^q$ can be closely approximated by tangent lines in (KPL$^t$) and we can derive bounds directly on the error in the Kolm-Pollak penalty that is introduced by the tangent line approximation.  We assume that the linearization points are equally spaced at a width of $w < 1$ apart. For context with regards to inequality \eqref{eq:tangent_bound_cases}, $A(0.01) \approx 10^{-5}$ and $A(0.001) \approx 10^{-7}$.

\begin{theorem} \label{thm:penalty_error_tangent}
For  $w \in (0,1)$, suppose $\beta_i = iw$ for $i = 0, 1, \dots, n$, where  $nw > -\kappa \sigma^{max}$. Then $\ddot{\sigma} \leq \hat{\sigma}$ and
\begin{align}
|\ddot{E}(\boldsymbol{\beta},\sigma^*)| 
~:=~ \hat{\sigma} - \ddot{\sigma}
&~\leq~ 
\frac{1}{\kappa}\ln\left(1 - \frac{A(w)}{1 + e^{\kappa\sigma^*}(e^{\kappa \Delta} - 1)}\right) \label{eq:exact_tangent_bound} \\
&~\leq~
\begin{cases}
\frac{1}{\kappa}\ln\left(1-A(w)\right), & \text{ if } \Delta \leq 0;  \\
\frac{1}{\kappa}\ln\left(1-1.25A(w)\right),
& \text{ if } \Delta > 0 \text{ and } \hat{\mathcal{K}} = \mathcal{K}^{rem},
\end{cases} \label{eq:tangent_bound_cases}
\end{align}
where $A(w) := e^{\frac{we^w}{e^w-1}-1} - \frac{we^w}{e^w-1}$.
\end{theorem}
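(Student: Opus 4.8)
The plan is to fix an optimal solution $(\mathbf{x}^*,\mathbf{y}^*)$ to (KPL$^p$), with intended penalty $\sigma^*$ and $q^* := -\kappa\sigma^*$, and to track how replacing $e^{q}$ by the piecewise-linear lower envelope $g$ changes the penalty actually charged against the Kolm-Pollak score. Since $e^{q}$ is convex, every tangent line lies below it, so $g(q^*)\le e^{q^*}$; feeding the smaller value into the objective can only decrease the effective penalty, which gives the under-penalization claim $\ddot{\sigma}\le\hat{\sigma}$. Writing $E:=e^{q^*}-g(q^*)\ge 0$ for the pointwise linearization error and converting each penalized objective back to a score via $\overline{\mathcal{K}}=Te^{-\kappa\mathcal{K}}$ (Theorem \ref{thm:penalty}), I would reuse the computation in the proof of Theorem \ref{thm:penalty_approx_error}: with $B:=1+e^{-\kappa\Delta}(e^{-\kappa\sigma^*}-1)$ we have $\hat{\sigma}=-\tfrac{1}{\kappa}\ln B$, and replacing $e^{q^*}$ by $e^{q^*}-E$ replaces $B$ by $B-e^{-\kappa\Delta}E$. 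Hence $\hat{\sigma}-\ddot{\sigma}=\tfrac{1}{\kappa}\ln\!\big(1-e^{-\kappa\Delta}E/B\big)$, the single identity on which everything else hangs.

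Next I would bound $E$ and simplify. Proposition \ref{thm:max_ex_error} gives that, on the width-$w$ interval $[\beta_{j-1},\beta_j]$ containing $q^*$, the maximum gap between $e^{q}$ and the upper envelope of the two endpoint tangents is $e^{\beta_{j-1}}A(w)$; since the points are equally spaced with $\beta_{j-1}\le q^*=-\kappa\sigma^*$, this yields $E\le e^{-\kappa\sigma^*}A(w)$. The key algebraic step is the identity $1+e^{\kappa\sigma^*}(e^{\kappa\Delta}-1)=e^{\kappa(\Delta+\sigma^*)}B$, that is, $B=e^{-\kappa(\Delta+\sigma^*)}C$ where $C$ denotes the denominator in \eqref{eq:exact_tangent_bound}. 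Substituting the bound on $E$ gives $e^{-\kappa\Delta}E/B\le A(w)/C$, and because $\kappa<0$ makes $t\mapsto\tfrac{1}{\kappa}\ln(1-t)$ increasing, this upper bound on the argument propagates to the claimed bound \eqref{eq:exact_tangent_bound}.

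For the case bounds \eqref{eq:tangent_bound_cases} I would exploit that $\tfrac{1}{\kappa}\ln(1-A(w)/C)$ is decreasing in $C$, so it suffices to lower-bound $C=1-e^{\kappa\sigma^*}(1-e^{\kappa\Delta})$. When $\Delta\le 0$ we have $\kappa\Delta\ge 0$, so $e^{\kappa\Delta}\ge 1$ and $C\ge 1$, which gives the first line. When $\Delta>0$, using $e^{\kappa\sigma^*}\le 1$ gives $C\ge e^{\kappa\Delta}$, so $C\ge\tfrac{4}{5}$ as soon as $|\kappa|\Delta\le\ln\tfrac{5}{4}$, turning $A(w)/C\le\tfrac{5}{4}A(w)$ and yielding the second line. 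The main obstacle is precisely this last estimate: it requires controlling $\Delta=\mathcal{K}^{rem}-\mathcal{K}^{*}$ from above, whereas Proposition \ref{prop:Khats} only supplies $\Delta\ge 0$. A universal upper bound on $\Delta$ cannot hold (a single cheaply-penalized but highly beneficial location can make $\mathcal{K}^{rem}-\mathcal{K}^{*}$ large), so I expect the $\Delta>0$ branch to rest on the same practical regime invoked after Proposition \ref{thm:max_practical_exponent}, namely that penalized sites deliver only modest score improvement ($c_s\ll\mu_D$), under which $|\kappa|\Delta$ stays below $\ln\tfrac{5}{4}$ and hence $C\ge\tfrac{4}{5}$.
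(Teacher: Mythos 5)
Your master identity and the first branch coincide with the paper's own proof. The paper likewise writes $\hat{\sigma} = -\frac{1}{\kappa}\ln\bigl(e^{-\kappa(\Delta+\sigma^*)}+1-e^{-\kappa\Delta}\bigr)$, introduces the pointwise linearization error $err(q^*,w)$ (your $E$), bounds it by $e^{-\kappa\sigma^*}A(w)$ using Proposition~\ref{thm:max_ex_error} exactly as you do, and reaches \eqref{eq:exact_tangent_bound} by the same algebra (two of its intermediate display lines contain a $\sigma^*$/$\Delta$ typo, but the final expression agrees with your correct computation). The $\Delta\le 0$ case via $C:=1+e^{\kappa\sigma^*}(e^{\kappa\Delta}-1)\ge 1$, and the claim $\ddot{\sigma}\le\hat{\sigma}$ from $E\ge 0$, are also identical to the paper.

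The divergence is the $\Delta>0$, $\hat{\mathcal{K}}=\mathcal{K}^{rem}$ branch, and here the situation is more interesting than a simple omission on your part. The paper does not invoke a practical-regime assumption; it derives a fact you did not use: by Theorem~\ref{thm:penalty_approx_error_bound}, $\Delta>0$ implies $\sigma^*\le\hat{\sigma}$, and since the optimal (KPL)$^{rem}$ solution is feasible for (KPL$^p$) at zero penalty, optimality gives $\mathcal{K}^*+\hat{\sigma}\le\mathcal{K}^{rem}$, hence $0\le\sigma^*\le\Delta$. It then asserts $1+e^{\kappa\sigma^*}(e^{\kappa\Delta}-1)\ge 1+e^{\kappa\Delta}(e^{\kappa\Delta}-1)\ge 0.75$. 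That first inequality is reversed: since $e^{\kappa\Delta}-1<0$ and $\sigma^*\le\Delta$ forces $e^{\kappa\sigma^*}\ge e^{\kappa\Delta}$, what actually follows is $C\le 1+e^{\kappa\Delta}(e^{\kappa\Delta}-1)$. The only valid lower bound obtainable from $0\le\sigma^*\le\Delta$ is yours, $C\ge e^{\kappa\Delta}$ (using $e^{\kappa\sigma^*}\le 1$), and that cannot be converted into a universal constant without an upper bound on $|\kappa|\Delta$ --- precisely the obstruction you identified, and a real one (your cheaply-penalized, highly beneficial location makes $\Delta$ large while keeping $\sigma^*$ small, driving $C$ toward $0$). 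There is also a constant mismatch: even if the paper's chain held, $C\ge 0.75$ yields a factor of $4/3$, while the stated $1.25\,A(w)$ requires $C\ge 0.8$. In short: your proof is correct and matches the paper for \eqref{eq:exact_tangent_bound} and the first case of \eqref{eq:tangent_bound_cases}; for the second case you missed the paper's derivation of $\sigma^*\le\Delta$, but your refusal to claim that bound unconditionally is vindicated, because the paper's own argument for that branch does not go through as written.
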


The following is a direct result of Theorem \ref{thm:more_bounds}, Proposition \ref{prop:no_exp_error}, and Theorem \ref{thm:penalty_error_tangent}.
\begin{corollary}
If $\hat{\mathcal{K}} = \mathcal{K}^{all}$ and the linearization values are a fixed width of $w$ apart in (KPL$^t$), then less-desirable locations are underpenalized and the cumulative error in the penalty applied to the optimal Kolm-Pollak score satisfies
\begin{align*}
0
~\leq~
\sigma^* - \ddot{\sigma} 
&~\leq~ 
\sigma^{all}\left(1-e^{\kappa\sigma^*}\right) + \frac{1}{\kappa} \ln(1-A(w)).
\end{align*}
If, in addition, $c_s = c$, for all $s \in U$, and $w = -\kappa c$, then 
\[
0
~\leq~
\sigma^* - \ddot{\sigma} 
~\leq~
\sigma^{all}\left(1-e^{\kappa\sigma^{*}}\right).
\]
\end{corollary}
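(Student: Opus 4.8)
The plan is to decompose the cumulative penalty error into its two independent sources and bound each using the results already established. Since $\ddot{\sigma}$ approximates $\hat{\sigma}$, which in turn approximates $\sigma^*$, I would write
\[
\sigma^* - \ddot{\sigma} ~=~ \underbrace{(\sigma^* - \hat{\sigma})}_{\text{parameter error}} ~+~ \underbrace{(\hat{\sigma} - \ddot{\sigma})}_{\text{linearization error}},
\]
and handle the two terms separately, then recombine.

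For the parameter error, I would first observe that taking $\hat{\mathcal{K}} = \mathcal{K}^{all}$ forces $\Delta = \mathcal{K}^{all} - \mathcal{K}^* \leq 0$ by Proposition \ref{prop:Khats}. Theorem \ref{thm:more_bounds} then applies directly, giving under-penalization ($\sigma^* - \hat{\sigma} \geq 0$) together with the upper bound $\sigma^* - \hat{\sigma} \leq \sigma^{all}(1 - e^{\kappa\sigma^*})$. For the linearization error, the same sign condition $\Delta \leq 0$ activates the first case of \eqref{eq:tangent_bound_cases} in Theorem \ref{thm:penalty_error_tangent}, which supplies both $\hat{\sigma} - \ddot{\sigma} \geq 0$ and the bound $\hat{\sigma} - \ddot{\sigma} \leq \tfrac{1}{\kappa}\ln(1 - A(w))$. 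Adding the two nonnegative contributions yields the lower bound $0 \leq \sigma^* - \ddot{\sigma}$, while summing the two upper bounds yields the claimed first inequality.

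For the refinement, I would check that the hypotheses $c_s = c$ and $w = -\kappa c$ are precisely the setup of Proposition \ref{prop:no_exp_error}: the equally-spaced points $\beta_i = iw = -\kappa c i$ coincide with the points at which every feasible value of $q = -\kappa\sum_{s\in U} c\, x_s$ lands, so the tangent-line approximation is exact and $\ddot{\sigma} = \hat{\sigma}$. The linearization error then vanishes, collapsing the cumulative bound to the pure parameter-error bound $\sigma^{all}(1 - e^{\kappa\sigma^*})$ from Theorem \ref{thm:more_bounds}, while under-penalization still gives $0 \leq \sigma^* - \ddot{\sigma}$.

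This corollary is essentially an additive bookkeeping of two previously bounded errors, so I do not anticipate a substantive obstacle; the only point requiring care is sign tracking. Because $\kappa < 0$ and $A(w) \in (0,1)$ for $w \in (0,1)$, the quantity $\tfrac{1}{\kappa}\ln(1 - A(w))$ must be verified to be \emph{positive} before it can be added as an upper bound on a nonnegative error. I would likewise confirm that the choice $\hat{\mathcal{K}} = \mathcal{K}^{all}$ triggers the $\Delta \leq 0$ branch of Theorem \ref{thm:penalty_error_tangent} rather than the $\Delta > 0$, $\hat{\mathcal{K}} = \mathcal{K}^{rem}$ branch, since the two branches give different constants.
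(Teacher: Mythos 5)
Your proposal is correct and is essentially the paper's own argument: the paper proves this corollary simply by citing Theorem \ref{thm:more_bounds}, Proposition \ref{prop:no_exp_error}, and Theorem \ref{thm:penalty_error_tangent}, which is precisely the additive decomposition $\sigma^* - \ddot{\sigma} = (\sigma^* - \hat{\sigma}) + (\hat{\sigma} - \ddot{\sigma})$ you carry out. Your explicit checks (that $\hat{\mathcal{K}} = \mathcal{K}^{all}$ forces $\Delta \leq 0$ via Proposition \ref{prop:Khats}, selecting the first branch of \eqref{eq:tangent_bound_cases}, and that $\frac{1}{\kappa}\ln(1-A(w)) > 0$ since $\kappa < 0$) just fill in details the paper leaves implicit.
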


\begin{example} \textbf{(Penalty error bounds)} \label{ex:penalty}
To explore the error observed in practice, we solved four realistic instances of (KPL$^t$) using the data for Santa Rosa, California from the food desert application described in Section \ref{sec:grocery}.  In all instances, the linearization points were chosen to be a fixed width apart, $w$. We set $k=10$, $\epsilon = -1$, $\kappa = -\alpha = -6.93 \times 10^{-4}$, 
and $\hat{\mathcal{K}} = \mathcal{K}^{all} \approx 1273.5382$ m (meters). We chose $U$ to be the set of locations optimal to (KPL)$^{all}$ and $c_s = c = 4.8609 \approx \frac{\mathcal{K}^{rem}-\mathcal{K}^{all}}{10}$, for all $s \in U$, so that $\sigma^{all} = 48.609$ m. The optimal locations selected by (KPL$^t$) were the same for all instances and included two penalized locations, as displayed in Figure \ref{fig:penalized}. For all instances, $\mathcal{K}^* \approx 1280.1363$ m, 
$\Delta = \hat{\mathcal{K}} - \mathcal{K}^* \approx -6.5981$ m, and $\sigma^* = 9.7217$ m. Note that $-\kappa \sigma^{max} \approx 0.0337$, so we explore values of $w$ as large as $0.01$.

Tables \ref{ex2:tab1} and \ref{ex2:tab2} contain bounds on the error in the Kolm-Pollak penalty introduced by approximating $\hat{\mathcal{K}}$ and by the linear approximations, respectively, in (KPL$^t$). Table \ref{ex2:tab3} contains combined error bounds, as well as the actual observed separate and combined errors for each instance.

\begin{table}[ht]
\setlength{\tabcolsep}{7pt}
\centering
\begin{tabular}{r|cccc} 
$\sigma^*$ & 4.861 & 9.722 & 14.583 & 19.443 \\
\midrule
$\sigma^* - \hat{\sigma} \leq$ & 
0.163 & 0.326 & 0.489 & 0.651\\
\end{tabular}
\caption{Bounds on penalty error arising from $\hat{\mathcal{K}}$-approximation: $\sigma^{all}\left(1-e^{\kappa\sigma^*}\right)$.}
\label{ex2:tab1}
\end{table}

\begin{table}[ht]
\setlength{\tabcolsep}{7pt}
\centering
\begin{tabular}{r|ccccc} \toprule
$w$ & $-5\kappa$ & 0.0001 & 0.001 & 0.01 \\
\midrule
$\hat{\sigma} - \ddot{\sigma} \leq$ & 
$0^{\dagger}$ & $1.804 \times 10^{-6}$ & $1.805 \times 10^{-4}$ & 0.0181  \\
\bottomrule
\end{tabular}
\caption{Bounds on penalty error arising from linearization: $\frac{1}{\kappa} \ln(1-A(w))$. $^{\dagger}$The error is zero by Proposition \ref{prop:no_exp_error}.}
\label{ex2:tab2}
\end{table}

\begin{table}[ht]
\setlength{\tabcolsep}{7pt}
\centering
\begin{tabular}{r|ccccc} \toprule
$w$ & $\sigma^* - \hat{\sigma}$ & $\hat{\sigma}-\ddot{\sigma}$ & $\sigma^* - \ddot{\sigma}$ & $\sigma^* - \ddot{\sigma}\leq$ \\
\midrule
$-c\kappa$ & 0.0321 & 0 & 0.0321& 0.326 \\
0.0001  & 0.0321 & $9.70 \times 10^{-7}$& 0.0321 & 0.326 \\
0.001  & 0.0321 & $4.99 \times 10^{-5}$ & 0.0322 & 0.327 \\
0.01  & 0.0321 & $7.68 \times 10^{-3}$ & 0.0398 &  0.344 \\
\bottomrule

\end{tabular}
\smallskip

\caption{$\sigma^*-\hat{\sigma}$ is the error introduced by approximating the parameter 
$\hat{\mathcal{K}}$ (which does not depend on $w$). $\hat{\sigma}-\ddot{\sigma}$ is the error introduced by the linearization. $\sigma^*-\ddot{\sigma}$ is the combined error. The bounds for the combined error (in the last column) are given by $\sigma^{all}\left(1-e^{\kappa\sigma^*}\right) + \frac{1}{\kappa} \ln(1-A(w))$. The actual combined error is quite small in all cases (recall that the penalty units are meters).}
\label{ex2:tab3}
\end{table}

\begin{figure}[h!]
    \centering
    \includegraphics[width=\textwidth]{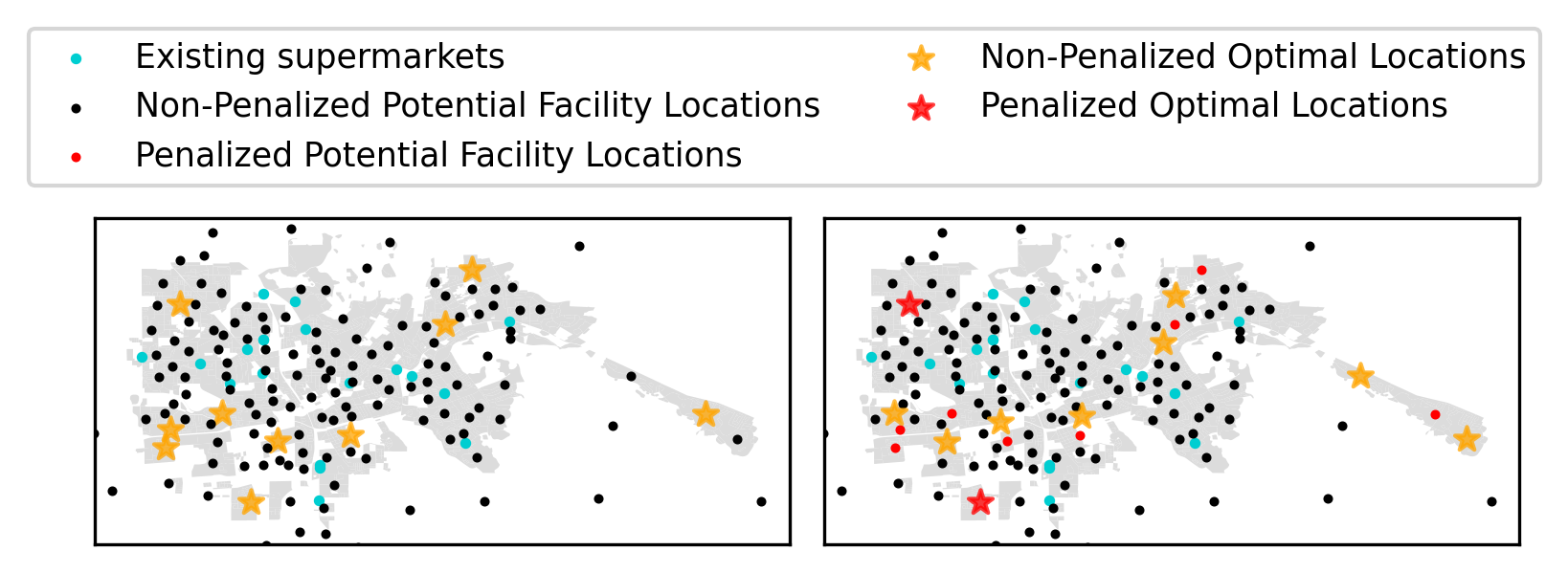}
    \caption{Optimal locations of 10 additional supermarkets in Santa Rosa, California. The figure on the left displays the solution to (KPL)$^{all}$, and the figure on the right displays the solution to (KPL)$^{p}$.}
    \label{fig:penalized}
\end{figure}

\end{example}

Note that all of the analysis in this section refers to overall penalty error, rather than per-location error. Of course, one could divide any of the error expressions or bounds by the number of locations represented by $\sigma^*$ to find a per-location error expression or bound.

\subsubsection{Penalty Strategy and Example}
We conclude this section by summarizing strategies for modeling the penalty term recommended by our analysis, then demonstrating their practical implementation. 

Choose $\hat{\mathcal{K}} = \mathcal{K}^{all}$ to keep the error associated with approximating $\hat{\mathcal{K}}$ low. A good way to scale the per-location penalty so that penalized locations are discouraged but not eliminated is to set 
\[ c_s ~=~ c ~:=~ \frac{\mathcal{K}^{rem}-\mathcal{K}^{all}}{N}, \text{ for all } s \in U,\] 
where $N$ represents the number of less-desirable locations selected by (KPL)$^{all}$. If all per-location penalties are the same ($c$), set $w := -\kappa c$ to avoid any linearization error. Otherwise, scale $w$ according to the $\Delta \leq 0$ case in inequality \eqref{eq:tangent_bound_cases} to bring the linearization error below a desired threshold.

The following case study provides a very clear example of the practical relevance of this fixed-charge penalty strategy.

\begin{example} \textbf{(Access to early voting)} 
We solved 19 instances of (KPL$^t$), the exact linear version of (KPL$^p$) with $w$ chosen as in Proposition \ref{prop:no_exp_error}, to optimally locate $k \in \{12, 13, \dots, 30\}$ early voting sites in Gwinnett County, Georgia using the method described above. Potential locations included 12 early voting locations from the 2020 and 2022 elections and 145 potential new early voting locations including community centers, libraries, churches, and fire stations, many of which were election day polling centers. We applied penalties to the 77 churches and 5 fire stations in the set of potential locations using the strategy described above, so that these locations would only be selected if a ``good enough" solution could not be found using the other types of buildings. We also required that every solution select at least 10 of the 12 existing early voting locations (moving polling locations can negatively impact turnout).

Table \ref{tab:gwinnett_penalty} summarizes the results, and Figure \ref{fig:gwinnett_penalty} visualizes the results for the case of selecting $k=20$ early voting sites. As we can see in Figure \ref{fig:gwinnett_penalty}, (KPL$^t$) obtains a very similar distribution of distances using only two penalized sites as (KPL) obtains using eight penalized sites. This is a very useful feature for decision-makers to achieve a near-optimal solution while using very few less-desirable sites. In Table \ref{tab:gwinnett_penalty}, we see a similar trend for all values of $k$. The penalty errors that arise from approximating the parameter $\hat{\mathcal{K}}$ are quite small, ranging from 0 to 2.3 meters.

\begin{figure}[ht]
\centering
\subcaptionbox{Churches and fire stations removed}[0.3\textwidth]{\includegraphics[height=1.8in]{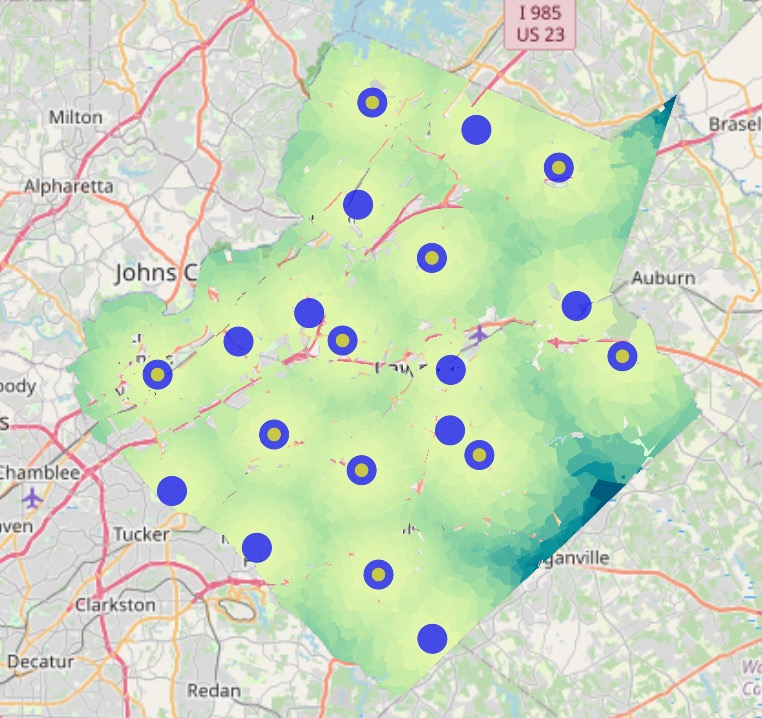}}
\subcaptionbox{All sites, no penalties}[0.3\textwidth]{\includegraphics[height=1.8in]{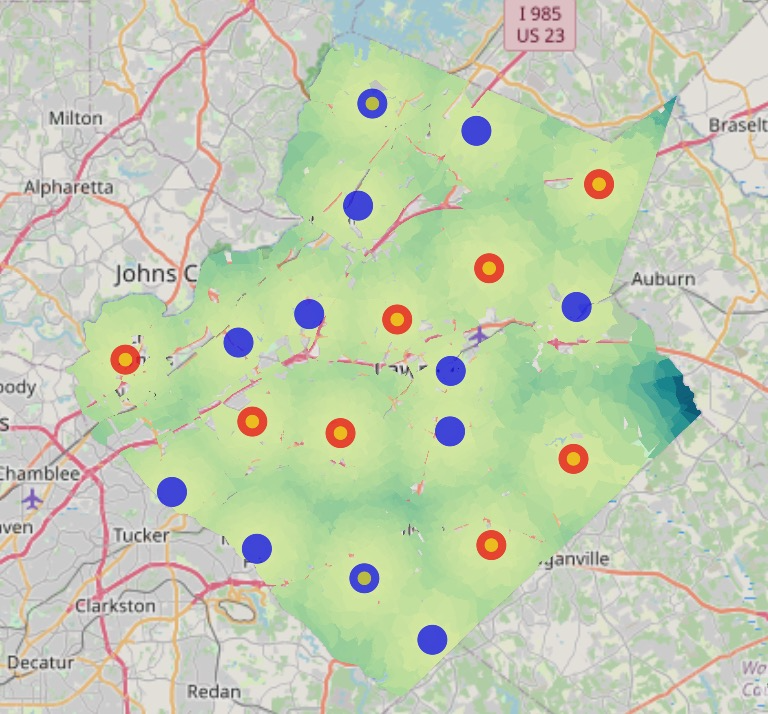}}
\subcaptionbox{All sites with penalties}[0.3\textwidth]{\includegraphics[height=1.8in]{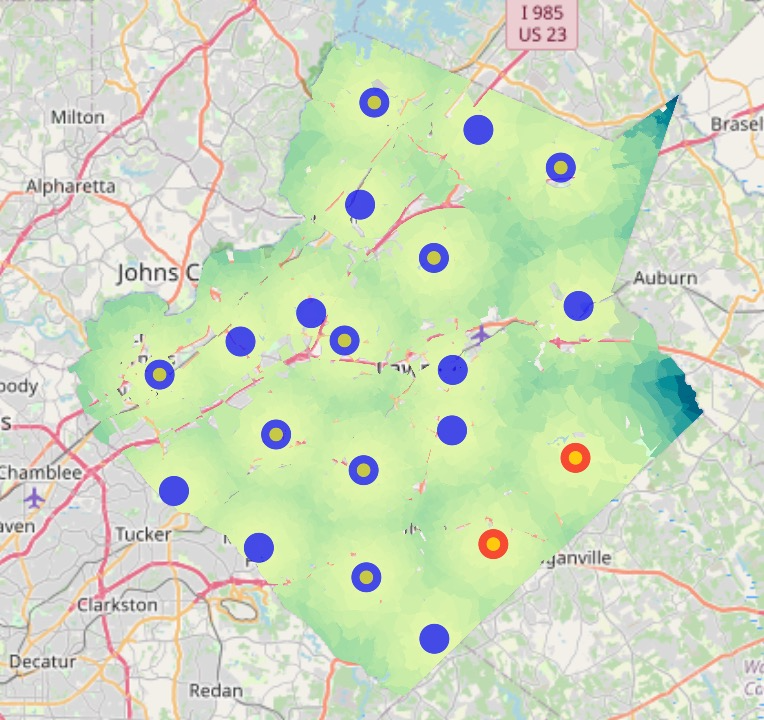}}
\caption{Gwinnett County early voting sites. Red dots represent less-suitable sites (churches and fire stations). Filled dots represent existing early voting sites. Darker background shading indicates residential areas that are farther from an early voting site. Map (a) displays the optimal solution to (KPL$^{rem}$): less-suitable sites are not included as potential locations. In map (b), eight less-suitable sites are selected by  (KPL$^{all}$): all sites are included but no penalties are applied. In map (c), the penalized model, (KPL$^t$), achieves a near-optimal Kolm-Pollak score with only 2 less-suitable sites.}
\label{fig:gwinnett_penalty}
\end{figure}

\begin{table}[ht]
\setlength{\tabcolsep}{7pt}
\centering
\begin{tabular}{c|c|cc|cc|c|cc} \toprule
& (KPL$^{rem}$) & \multicolumn{2}{|c|}{(KPL$^{all}$)} & \multicolumn{2}{c|}{(KPL$^t$)} &  & \multicolumn{2}{c}{per-site penalty} \\
$k$ & $\mathcal{K}^{rem}$ & $|P|$ & $\mathcal{K}^{all}$ & $|P|$ & $\mathcal{K}^t$ & ~gap~ & desired & error \\
\midrule
12 & 4953.8 & 2  & 4896.2 & 1 & 4925.8 & 0.01 & 28.8 & 0.0 \\
13 & 4645.5 & 3  & 4514.8 & 1 & 4515.7 & 0.00 & 43.6 & 0.0 \\
14 & 4446.4 & 3  & 4275.2 & 1 & 4285.4 & 0.00 & 57.1 & 0.2 \\
15 & 4257.0 & 3  & 4066.7 & 1 & 4084.4 & 0.00 & 63.4 & 0.4 \\
16 & 4061.9 & 4  & 3856.9 & 1 & 3876.5 & 0.01 & 51.2 & 0.4 \\
17 & 3877.2 & 4  & 3630.7 & 1 & 3658.2 & 0.01 & 61.6 & 0.6 \\
18 & 3719.7 & 5  & 3450.6 & 2 & 3480.8 & 0.01 & 53.8 & 0.6 \\
19 & 3621.3 & 6  & 3312.6 & 2 & 3369.3 & 0.02 & 51.5 & 1.0 \\
20 & 3524.0 & 8  & 3197.4 & 2 & 3262.7 & 0.02 & 40.8 & 0.9 \\
21 & 3428.0 & 7  & 3098.3 & 2 & 3188.8 & 0.03 & 47.1 & 1.5 \\
22 & 3358.5 & 6  & 3013.3 & 2 & 3123.4 & 0.04 & 57.5 & 2.2 \\
23 & 3304.3 & 7  & 2937.7 & 2 & 3064.4 & 0.04 & 52.4 & 2.3 \\
24 & 3253.7 & 9  & 2864.6 & 2 & 3009.3 & 0.05 & 43.2 & 2.1 \\
25 & 3211.7 & 12 & 2800.5 & 4 & 2889.1 & 0.03 & 34.3 & 1.0 \\
26 & 3162.5 & 12 & 2736.4 & 4 & 2841.0 & 0.04 & 35.5 & 1.3 \\
27 & 3125.7 & 12 & 2681.4 & 4 & 2784.8 & 0.04 & 37.0 & 1.3 \\
28 & 3091.5 & 12 & 2631.2 & 3 & 2775.4 & 0.05 & 38.4 & 1.9 \\
29 & 3060.7 & 14 & 2583.0 & 5 & 2667.4 & 0.03 & 34.1 & 1.0 \\
30 & 3036.4 & 15 & 2541.1 & 5 & 2620.4 & 0.03 & 33.0 & 0.9 \\
\bottomrule
\end{tabular}
\caption{The Kolm-Pollak scores ($\mathcal{K}$) and the number of fire stations and churches ($|P|$) selected by optimal solutions to three models in the Gwinnett County early voting application. Note that (KPL$^{rem}$) chooses no less-suitable sites, but leads to the highest Kolm-Pollak score. (KPL$^{all}$) achieves the best possible Kolm-Pollak scores, while (KPL$t$) achieves near-optimal Kolm-Pollak scores with fewer less-suitable sites, as measured by ``gap'': $\frac{\mathcal{K}^t-\mathcal{K}^{all}}{\mathcal{K}^{all}}$. The Kolm-Pollak scores, the per-site desired penalty, and the per-site penalty error are all measured in meters. }
\label{tab:gwinnett_penalty}
\end{table}

\end{example}

\section{Scaling the Inequality Aversion Parameter}\label{sec:scaling_parameter}


To minimize the Kolm-Pollak EDE via (KPL), we must approximate $\alpha$, the factor used to scale the inequality aversion parameter. As a consequence, we do not have complete control over the aversion to inequality represented by the optimal Kolm-Pollak score, however, we are able to precisely quantify the effect this has.  In this section, we explore computationally the impact of approximating $\alpha$ on both the aversion to inequality represented by the optimal distribution and on the optimal Kolm-Pollak score itself. Our tests include practical instances on 428 cities in which we seek to optimally locate 1, 5, or 10 additional supermarkets, as well as 5 cities in which we replace all existing polling sites with the same number of optimally located sites. 

\subsection{Iterative Calibration of $\alpha$}

Let $\epsilon_0$ represent the desired level of aversion to inequality. For each problem instance, we first estimate the optimal distribution of distances by assigning each census block to the closest existing location (grocery store or polling site). We use this distribution of distances to calculate an approximate value of $\alpha$, which we denote $\alpha_1^{in}$. Let (KPL)$_1$ represent the model (KPL) with $\kappa = \kappa_1 := \alpha_1^{in}\epsilon_0$. Let $\alpha_1^{out}$ denote the value of $\alpha$ associated with the optimal solution to (KPL)$_1$.  We can calculate the aversion to inequality represented by the optimal solution to (KPL)$_1$ as $\epsilon_1 := \kappa_1/\alpha_1^{out}$. Comparing $\epsilon_0$ and $\epsilon_1$ is a useful way to check the accuracy of $\alpha_1^{in}$. The closer $\alpha_1^{in}$ approximates $\alpha$, the closer $\epsilon_1$ will be to $\epsilon_0$. 

It is reasonable to expect that the distribution of distances associated with an optimal solution to (KPL)$_1$ will produce a more accurate $\alpha$-estimate. Thus, we solve (KPL) again, this time with $\kappa = \kappa_2 := \epsilon_0\alpha_{2}^{in}$, where $\alpha_{2}^{in} := \alpha_1^{out}$. We denote this version of (KPL) as (KPL)$_2$. We compute the aversion to inequality associated with the optimal solution to (KPL)$_2$ as $\epsilon_2 := \kappa_2/\alpha_2^{out}$, where $\alpha_2^{out}$ is calculated using the optimal distribution of distances obtained by solving (KPL)$_2$. 

In extensive computational tests, we found that the improved approximation, $\alpha_2^{in}$, was extremely accurate (results provided below). However, we also want to understand how important it is tune the $\alpha$ parameter in this way.  In other words, we want to test how sensitive optimal solutions are to the quality of the $\alpha$ approximation to determine if it is worth the extra computation time to solve (KPL) twice to obtain and use the very accurate $\alpha_2^{in}$. To do this, we used optimal solutions to (KPL)$_2$ as a benchmark to test the quality of optimal solutions to (KPL)$_1$. We denote the Kolm-Pollak EDE associated with an optimal solution to (KPL)$_n$ as $\mathcal{K}_n^*$, for $n \in \{1,2\}$. $\mathcal{K}_n^*$ represents the ``true'' value of the metric, calculated exactly using $\kappa = \epsilon_0 \alpha_n^{out}$. We find that $\mathcal{K}_1^*$ very closely approximates the more accurate $\mathcal{K}_2^*$ (detailed results below).

\subsection{$\alpha$-Approximation Experiments: Grocery Stores}

As described in detail in Section \ref{sec:computational_study}, the bulk of our computational experiments involved optimally locating 1, 5, and 10 additional grocery stores in the 500 largest cities in the United States. We completed two sets of experiments, one using a high-performance computing cluster and another with a personal laptop (specifications given in Section \ref{sec:compenvironments}). For the $\alpha$-approximation analysis described here, we used the laptop, which did not have sufficient memory to load the data for the largest cities. Thus, this study includes the 428 cities for which the laptop was able to solve (KPL)$_1$ and (KPL)$_2$ for 1, 5, and 10 additional stores.

Figure \ref{fig:epsilon} presents the distributions of $\epsilon_1$ (top row) and $\epsilon_2$ (bottom row) for the 428 cities when locating 1, 5, and 10 new stores. We see that $\epsilon_1$ has a skewed Normal distribution with a mean value that represents less aversion to inequality than $\epsilon_0 = -1$ (which is indicated by the red vertical line). As the number of new stores increases, the center of the $\epsilon_1$ distribution moves farther from the desired value of $-1$. This shift makes sense intuitively because adding more new stores naturally results in greater changes to the distribution of distances as compared to the original distribution, which was used in the original $\alpha$ approximation, $\alpha_1^{in}$. 

\begin{figure}[ht]
\centering
\includegraphics[width= \textwidth]{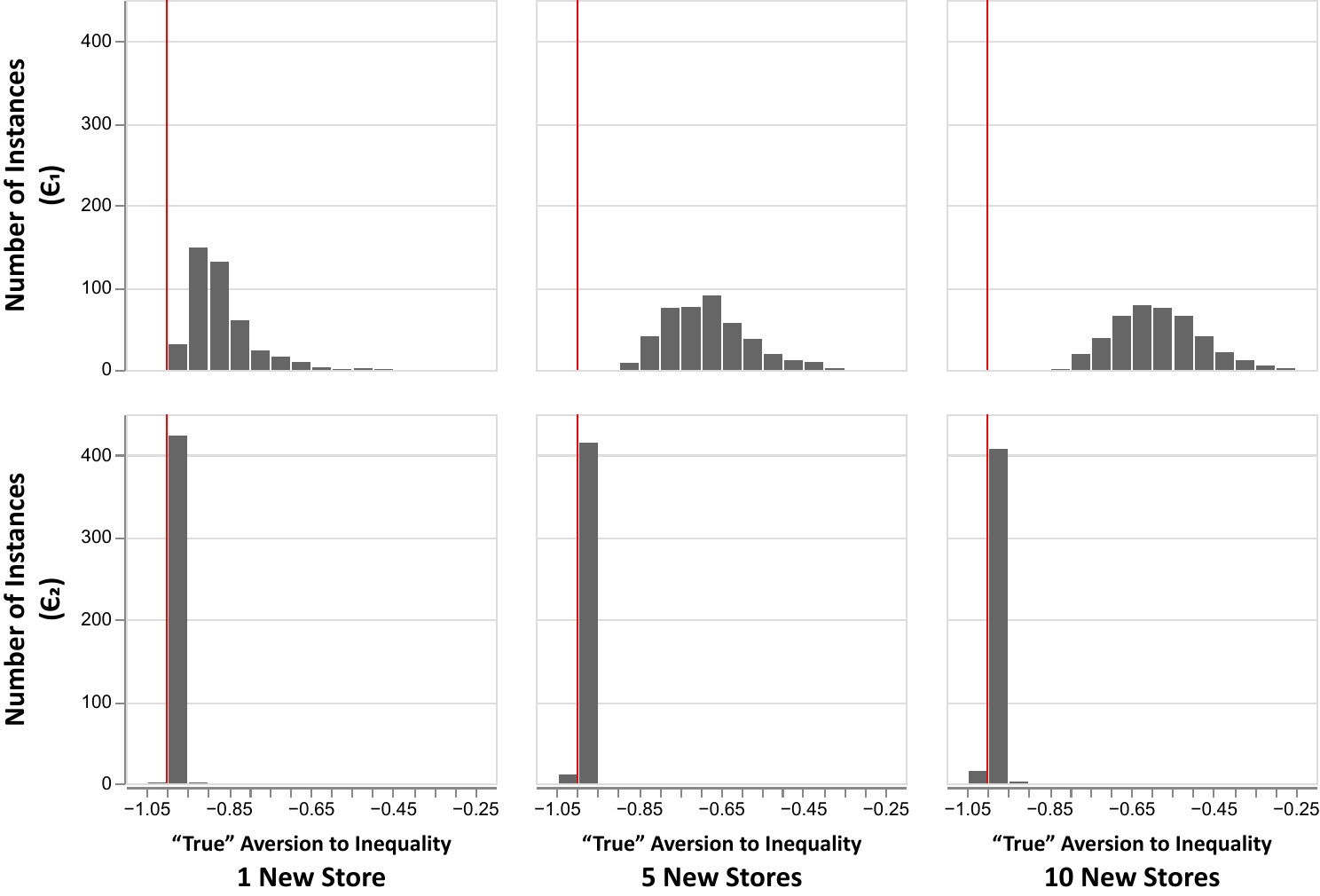}
\caption{Distribution of the ``true'' value of $\epsilon$, after the initial run ($\epsilon_1$)  and after the second run ($\epsilon_2$),  for $k=1, 5, \text{ and } 10$ new stores placed in 428 US cities.  The desired value is $\epsilon=-1$ and is indicated by the red vertical line.}
\label{fig:epsilon}
\end{figure}

As expected, $\alpha_1^{out}$ is a very good approximation, and $\epsilon_2$ very closely approximates $\epsilon_0$ in every instance. We can conclude that (KPL)$_2$ achieves a high quality optimal solution with respect to the desired level of inequality aversion. In order to test the quality of the solutions achieved by (KPL)$_1$, we calculated the raw gap between $\mathcal{K}_1^*$ and $\mathcal{K}_2^*$ as $\Delta := |\mathcal{K}_1^* - \mathcal{K}_2^*|$ meters, and the percent gap as {\sc gap} $:= \Delta/\mathcal{K}_2^*$. These results are visualized in Figure \ref{fig:gap_grocery_stores}, where the chart displays the cumulative distribution of {\sc gap} for the 428 cites, separated out by the number of additional grocery stores. 

\begin{figure}[ht]
\centering
\includegraphics[width=0.6\textwidth]{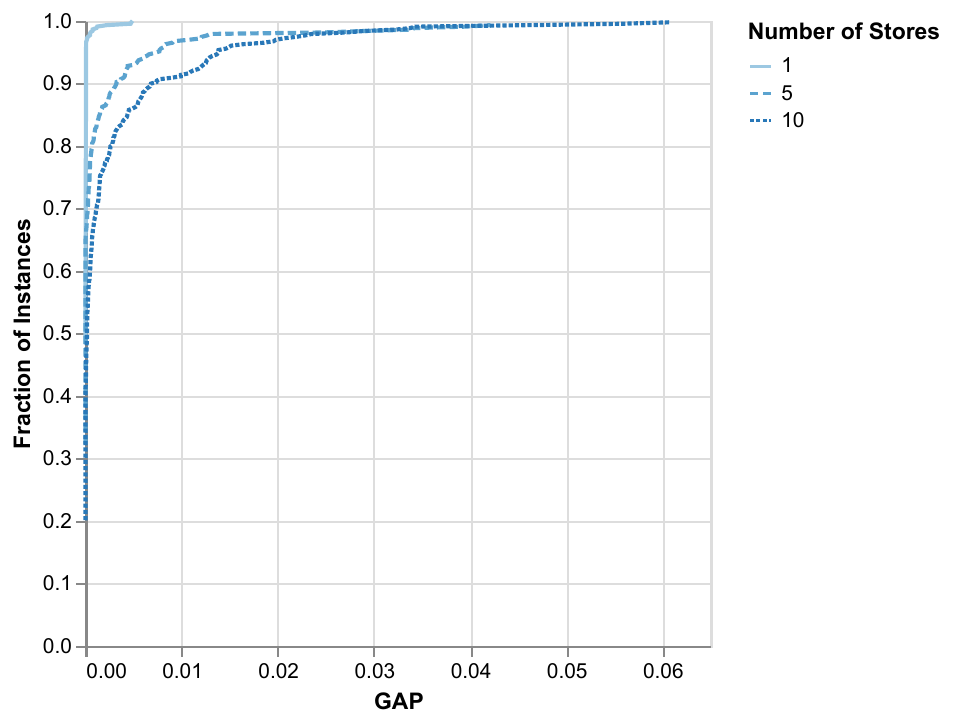}
\caption{The fraction of grocery store instances with {\sc gap}$:= |\mathcal{K}_1^*-\mathcal{K}_2^*|/\mathcal{K}_2^*$ at or below a given value. In more than 90\% of instances with $k=10$ stores, the original $\alpha$-approximation resulted in an optimal Kolm-Pollak EDE that is within 1\% of the more accurate $\mathcal{K}_2^*$. The gap is only rarely has more than 2\%. As expected, the results are better for $k=1$ and $k=5$.}
\label{fig:gap_grocery_stores}
\end{figure}

\subsection{$\alpha$-Approximation Experiments: Election Polling Sites}

For the grocery store instances, the accuracy of $\alpha_1^{in}$ decreases as the number of stores increases because the distribution of distances changes more as more stores locations are added. However, $\alpha_2^{in}$ remained a very good approximation, even as the number of stores increased. Moreover, (KPL)$_1$ still resulted in high quality optimal solutions in almost all cases, in spite of the relatively poor performance of $\alpha_1^{in}$. To more extensively test our findings, we consider situations where the optimal distributions of distances is even further from the original distances.

For this analysis, we introduce another set of practical scenarios. In these problems, we seek to optimally relocate the same number of election polling sites that were used in either the 2016 or the 2020 presidential election in five large US cities. Data for the polling scenarios is summarized in Table \ref{tab:pollingproblems}. $|R|$ indicates the number of census blocks (residential areas), $|S|$ indicates the number of potential polling sites, and $k$ indicates the number of polling sites selected (which matches the number of polling sites used in the given election year). The potential polling sites have varying capacities so we use the (KPL) model with constraints \eqref{cons:capacity}.  We set the desired aversion to inequality, denoted $\epsilon_0$, equal to $-2$ for all scenarios. 

In the polling scenarios we expect the initial $\alpha_1^{in}$ to be a less-accurate approximation than in the grocery store instances because we allow all of the existing sites to move.  This means that {\it every} resident may have a different distance in the new distribution. In contrast, with the grocery store examples, we do not remove any existing sites, so the travel distance for many residents will remain unchanged in the optimal distribution. We also expect the average distance, and the Kolm-Pollak EDE, to improve significantly from the original distribution to the optimal distribution, and this was indeed the case -- we saw that the optimal Kolm-Pollak EDE represented an improvement by more than a mile in each of the five cities. Despite these potential concerns, we saw very similar behavior for the polling instances as we did for the grocery store instances.

In Table \ref{tab:alpha0}, we display all three $\alpha$ approximations and the ``true'' aversion to inequality values for (KPL)$_1$ and (KPL)$_2$. In Table \ref{tab:alphasensitivity}, we present the raw and percent gaps between $\mathcal{K}_1^*$ and $\mathcal{K}_2^*$. In Table \ref{tab:alpha0}, we see that $\epsilon_1$ ranges between $-1.56$ and $-0.55$.  The desired level of aversion ($-2$) is not maintained by any of the optimal solutions. However, in spite of the differences between $\epsilon_1$ and the much more accurate $\epsilon_2$ (Table \ref{tab:alpha0}), the corresponding optimal Kolm-Pollak scores differ by no more than 1\% in any of the five scenarios (Table \ref{tab:alphasensitivity}).

\begin{table}[ht]
\setlength{\tabcolsep}{7pt}
\centering
\begin{tabular}{llccccr} \toprule
Scenario & City & Election & $|R|$ & $|S|$ & $k$ & Number of Variables \\ \midrule
1 & Salem & 2016 & 3102 & 92 & 8 & 285,476 \\
2 & Richmond & 2016 & 6208 & 211 & 65 & 1,310,099 \\
3 & Atlanta & 2016 & 8364 & 387 & 126 & 3,237,255 \\
4 & Cincinnati & 2016 & 7670 & 517 & 256 & 3,965,907 \\
5 & Baltimore & 2020 & 9238 & 901 & 201 & 8,324,339 \\
\bottomrule
\end{tabular}
\smallskip

\caption{A summary of the five polling scenarios used to test the sensitivity of the (KPL) model to $\alpha$ approximations.}
\label{tab:pollingproblems}
\end{table}

\begin{table}[ht]
\setlength{\tabcolsep}{7pt}
\centering
\begin{tabular}{ccccccc}
\toprule
Scenario & $\epsilon_0$ & $\epsilon_1$ &
$\epsilon_2$ & $\alpha_1^{in}$ & $\alpha_1^{out} = \alpha_2^{in}$ & $\alpha_2^{out}$   \\
\midrule
1 & -2 & -1.56 & -2.00 & 0.000192 & 0.000246 & 0.000246 \\
2 & -2 & -0.90 & -2.00 & 0.000212 & 0.000474 & 0.000474 \\
3 & -2 & -0.74 & -1.98 & 0.000179 & 0.000485 & 0.000488 \\
4 & -2 & -0.55 & -2.01 & 0.000171 & 0.000616 & 0.000614 \\
5 & -2 & -1.42 & -2.01 & 0.001277 & 0.001797 & 0.001789 \\
\bottomrule
\end{tabular}
\caption{Exact and approximate values of $\alpha$ and the aversion to inequality, where $\epsilon_0$ is the ``desired'' aversion to inequality. The aversion to inequality associated with the second iteration of the model, $\epsilon_2$, is very accurate for these instances.}
\label{tab:alpha0}
\end{table}

\begin{table}[h!t]
\setlength{\tabcolsep}{7pt}
\centering
\begin{tabular}{cccccc}
\toprule
Scenario & $\mathcal{K}_1^*$ & $\mathcal{K}_2^*$ & $\Delta$ & {\sc gap} \\
\midrule
1 & 6541.81 & 6541.81  & 0.00 & 0.0000 \\
2 & 2256.18 & 2259.49 & 3.31 & 0.0015 \\
3 & 2198.29 & 2191.37 & 6.92 & 0.0032 \\
4 & 5585.93 & 5571.38 & 14.54 & 0.0026 \\
5 & 571.21 & 566.15 & 5.06 & 0.0089 \\
\bottomrule
\end{tabular}
\caption{KP EDEs obtained via (KPL)$_1$ are close to the best-know KP EDEs, which were obtained via (KPL)$_2$. Distances in columns 2, 3, and 4 are measured in meters. }
\label{tab:alphasensitivity}
\end{table}

\subsection{Summary}

These experiments indicate that typically, optimal solutions are not highly sensitive to minor variations in $\alpha$, or equivalently, to varying levels of aversion to inequality. However, some practitioners may wish to have more control over the aversion to inequality represented by the optimal solution.  To do this, our experiments suggest the following strategy: solve (KPL) and calculate the true aversion to inequality that the optimal solution represents.  If the accuracy is not as desired, solve the model again using the distance distribution from the first solve to tune the $\alpha$ parameter.

\section{Computational Study} \label{sec:computational_study}
\subsection{Experimental Design}
We performed computational experiments to evaluate our approach to optimizing the Kolm-Pollak EDE on large-scale data sets in terms of both computational tractability and solution characteristics.  Specifically, we compare the (KPL) model with the p-centdian model (p-Ctdn), the p-median model (p-Med), and the p-center model (p-Ctr).  We chose these models as our benchmarks for the following reasons:
\begin{itemize}
    \item (p-Ctdn) represents a measure that explicitly balances efficiency with equity and still retains hope of scaling to the size instances we wish to solve.  While there are many other metrics that account for this balance, it is clear they are not appropriate for even our moderate-sized instances (recall the detailed discussion in Section \ref{background}). 
    \item While only accounting for efficiency and not equity, (p-Med) is known to scale well to large models. We use this as a benchmark on the solve time of our models. 
    \item Furthermore, (p-Med) and (p-Ctr) may be viewed as the extreme cases of both (p-Ctdn) and (KPL), and a goal of both models is to appropriately balance the two.  We consider optimal solutions to these extremes so that we may evaluate how well (p-Ctdn) and (KPL) balance the corresponding efficiency and fairness measures.
\end{itemize}

\subsubsection{Data}
\label{sec:grocery}
To test these models at scale, we drew from a real-world transportation application: optimizing access to grocery stores across the 500 largest U.S. cities.  In \cite{horton2024hundreds}, we use  this data to answer strategic questions for specific cities, 
such as: given a limit of $k$ new stores, where should they be located to maximize equitable access? And what is the minimum number of new stores needed to achieve the U.S. average level of access? In contrast, here, we demonstrate the methodological aspects of solving these large-scale equitable facility location problems.  In doing so, we are able to extensively test our model on diverse urban contexts and problem sizes.  Moreover, the only aspect of our data that is specific to supermarket access, as opposed to locating any general amenity, is the locations we chose to fix open.

 For each city, we incorporate:
\begin{itemize}
    \item Census Block locations and populations from the 2020 U.S. Census (the Census Block is the highest spatial resolution that the U.S. Census reports data);
    \item existing supermarket locations from OpenStreetMap \citep{OpenStreetMap};
    \item potential new locations at Census Block Group centroids (a Block Group is the 2nd highest spatial resolution reported by the U.S. Census);
    \item walking network distances calculated using the Open Source Routing Machine \citep{OSRM}.
\end{itemize}

\subsubsection{Computing Environments}
\label{sec:compenvironments}

To fully explore the scalability and solution quality of our model we performed two distinct sets of experiments.  In the first, we used a 2019 Dell XPS 13 9380 laptop with the Intel Core i7-8565U-8 chip and 16GB RAM to demonstrate performance on standard computing hardware.  Here, our goal was to compare computational time and the maximum size of solvable instances when using (KPL) with that achievable by the p-centdian approach.

We optimized the locations of $k \in \{1,5,10\}$ new facilities in each of the 500 cities while treating existing facilities as fixed open. We sought to solve all instances on four models: the (KPL) model with $\epsilon= -1$, and (p-Ctdn) with $\gamma \in \{ 0.1,0.5,0.9 \}$.  We chose to consider only one value of $\epsilon$ as preliminary experiments showed very little variation in the solve time for values of $\epsilon$ within the commonly-accepted interval $[-2, -0.5]$.

Our dataset included some very large optimization models, with the largest corresponding to New York City, which included 30,094 census blocks and 8,274 destinations, resulting in over 249 million binary variables in the (KPL) models. In Table \ref{tab:city_data} we summarize the problem data for five cities of interest. The first three cities correspond to the largest problem instances in the data set, as indicated by the first column which reports the rank of each city (among all 500 cities) according to the number of variables in the (KPL) models for that city. At rank 61 with around 1.5 million variables, Glendale, AZ is the highest ranked city for which all three (KPL) models (for $k \in \{1,5,10\}$) could be solved on the laptop. At rank 136 with around 0.5 million variables, Garden Grove, CA is the highest ranked city for which all nine (p-Ctdn) models (for $\gamma \in \{0.1, 0.5, 0.9\}$ and $k \in \{1,5,10\}$) could be solved on the laptop.

To further test the (KPL) model without the memory restrictions of a standard laptop, we ran all 500 cities (1,500 instances of the model) with $\epsilon=-1$ on a high-performance computing cluster with an AMD 7502 CPU processor, allocating one of the 64 available cores and 512 GB memory to each instance. The New York City instances required additional memory and were solved using 2 TB RAM.  In this setting, we were able to solve (KPL) on every instance for all 500 cities to optimality.  We also solved all instances on both the (p-Med) and (p-Ctr) models using the cluster.

In all cases, we implemented the models in Python using the optimization modeling language Pyomo \citep{hart2011pyomo,bynum2021pyomo} and solved them using the mixed-integer linear programming solver Gurobi \citep{gurobi}, using the default optimality gap of 0.01\%.

\begin{table}[ht]
\setlength{\tabcolsep}{7pt}
\centering
\begin{tabular}{clrrrr}
\toprule
Rank& City & Population  &|R| & |S| &  Number of Variables \\
\midrule
1&New York, NY & 8,784,592 & 30,094 & 8,274  &  249,006,030 \\
2&Los Angeles, CA & 3,849,235 & 23,038 & 4,701  & 108,306,340 \\
3&Chicago, IL & 2,733,239 & 33,011  & 3,237 &  106,859,845 \\
61 &Glendale, AZ & 241,320 & 2,249 & 706   &  1,588,500 \\
136 &Garden Grove, CA & 171,455 & 951 & 608   & 578,816 \\
\bottomrule
\end{tabular}
\caption{Summary of data from the three largest cities in the food deserts study, plus the largest instances solved on a standard laptop for (KPL) and (p-Ctdn) respectively.}
\label{tab:city_data}
\end{table}

\subsection{Computational Results: Scalability Analysis}\label{Solve Time Comparison}
\subsubsection{Comparison with p-centdian Using a Personal Laptop}

The personal laptop solved all nine (p-Ctdn) models for 348 cities and all three (KPL) models for 428 cities. There were an additional six cities where (KPL) was solved for $k=1$, but not for $k=5$, $k=10$, or both.  
The laptop encountered memory errors when attempting to load models corresponding to the other 66 cities. There were also many instances (86 cities) for which the laptop was able to load the data and the (p-Ctdn) model, but the kernel crashed during presolve due to memory issues.

\autoref{fig:solve_times_food_new} presents performance profiles showing the solve times for solved instances. The horizontal axis is on a logarithmic scale and displays the solve time (in seconds); this corresponds to the ``wall time'' for each instance, i.e., the length of time between when the solver is called and when a solution is returned. The vertical axis shows the number of cities that were solved to optimality within the given number of seconds.  We observe that (KPL) out-performs (p-Ctdn) by around a factor of 10 for all values of $k$ and $\gamma$.

\begin{figure}[ht]
\centering
\includegraphics[width=\textwidth]{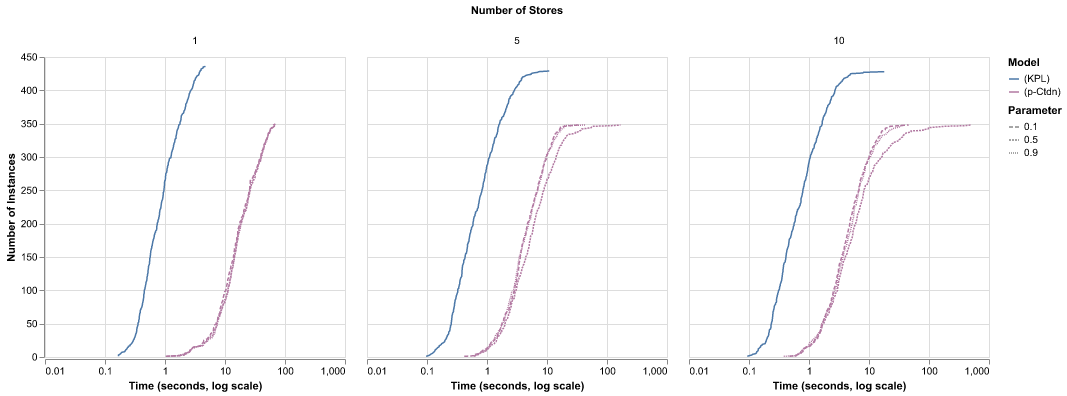}
\caption{Performance profiles showing food desert application solve times on a laptop for (p-Ctdn) and (KPL) on 428 U.S. cities for $k \in \{1,5,10\}$ additional supermarkets. (The parameter is $\gamma$ and refers to the ($p$-Ctdn) instances). The shorter lines for the (p-Ctdn) model indicate that fewer instances of that model solved on the laptop, and the position of the lines indicate that for the problems that (p-Ctdn) succeeded at solving, it was around 10 times slower than (KPL).} 
\label{fig:solve_times_food_new}
\end{figure}

\subsubsection{Comparison with p-median and p-center Using a High-Performance Computing Cluster}

To overcome the memory limitations on a standard laptop, we solved (KPL) with $\epsilon=-1$ on the high-performance computing cluster for all 500 of the largest US cities.  Here, we compare solve time with (p-Med), i.e., (p-Ctdn) with $\gamma=0$.  This represents the conventional approach focused on efficiency, which we know scales well to large instances. At the other end of the spectrum, we compare solve time with (p-Ctr), i.e., (p-Ctdn) with $\gamma=1$.

Observing the performance profiles in Figure \ref{fig:solve_times_food_new2}, we see that (p-Med) and (KPL) achieved similar solve times across all instances, 
and this performance parity persisted as the number of new stores increased. As expected, (KPL) typically solved significantly faster than (p-Ctr).  The New York City instances, detailed in \autoref{tab:nyc}, provide specific insights into relative performance. While (KPL) took longer than (p-Med) in all cases, the gap was modest, especially as the number of additional stores increased -- less than 0.05\% difference for $k=5$ and $k=10$. 
Both models substantially outperformed (p-Ctr), which required approximately twice the solve time.

\begin{figure}[ht]
\centering
\includegraphics[width=\textwidth]{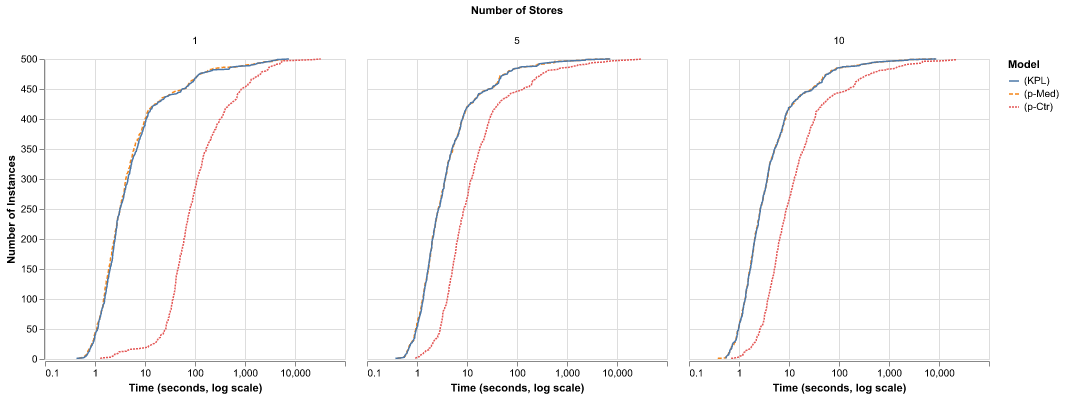}
\caption{Performance profiles showing food desert application solve times on the High Performance Computing Cluster for (KPL), (p-Med) and (p-Ctr) on all 500 U.S. cities for $k=1,5,10$ additional supermarkets.} 
\label{fig:solve_times_food_new2}
\end{figure}

\begin{table}[ht] 

\setlength{\tabcolsep}{7pt}
\centering
\begin{tabular}{crrr} \toprule
$k$ & (KPL) & (p-Med) & (p-Ctr) \\ \midrule
1 & 57.3 & 38.6 & 83.5\\
5 & 122.5 & 122.4 & 231.6 \\
10 & 146.2 & 140.9 & 286.4 \\
\bottomrule
\end{tabular}
\smallskip

\caption{Solve times (in minutes) for the New York City instances.}
\label{tab:nyc}
\end{table}

\subsection{Computational Results: Properties of Optimal Solutions}\label{Solution Comparison}

Each optimal solution to a problem instance defines a distribution of distances that residents must travel to reach their assigned (in this case, nearest) supermarket.  To visually compare the solution quality of (KPL) with that of (p-Ctdn), we plot the maximum value and the mean value of the distributions given by the respective optimal solutions.  In particular, we wish to compare the mean value of each optimal distribution with the best possible mean value, obtained by solving (p-Med), and similarly, we compare the maximum value of each optimal distribution with the optimal maximum value, obtained by solving (p-Ctr).

Figure \ref{fig:compare_mean_food} provides an overview of how the mean distance traveled by residents to reach their closest grocery store in an optimal solution of (KPL) compares with optimal solutions to (p-Ctr) and (p-Med).  The 500 cities are represented on the horizontal axis in each of the three charts, which display scenarios with $k \in \{1,5,10\}$ new stores. They are sorted according to the mean distance achieved by the optimal solution to (KPL), so the three colored shapes representing optimal solutions to (KPL), (p-Med), and (p-Ctr) for a single city appear in a vertical line. Figure \ref{fig:pct_compare_mean_food} is similar, but compares (p-Ctdn) to (p-Ctr) and (p-Med) for $\gamma \in \{0.1,0.5,0.9\}$ for the 348 cities for which we have (p-Ctdn) results.

Notably, the average distance for (p-Ctr) is consistently much higher than that of both (p-Med) and (KPL), while the average distances for (p-Med) and (KPL) closely align.  This general trend holds for (p-Ctdn) with $\gamma = 0.1$, although interestingly, it seems that an even smaller value of $\gamma$ would be necessary to align with (p-Med) to the same extent as (KPL).

\begin{figure}[hbt!]
\centering
\begin{subfigure}[b]{.95\textwidth}
\includegraphics[width=\textwidth]{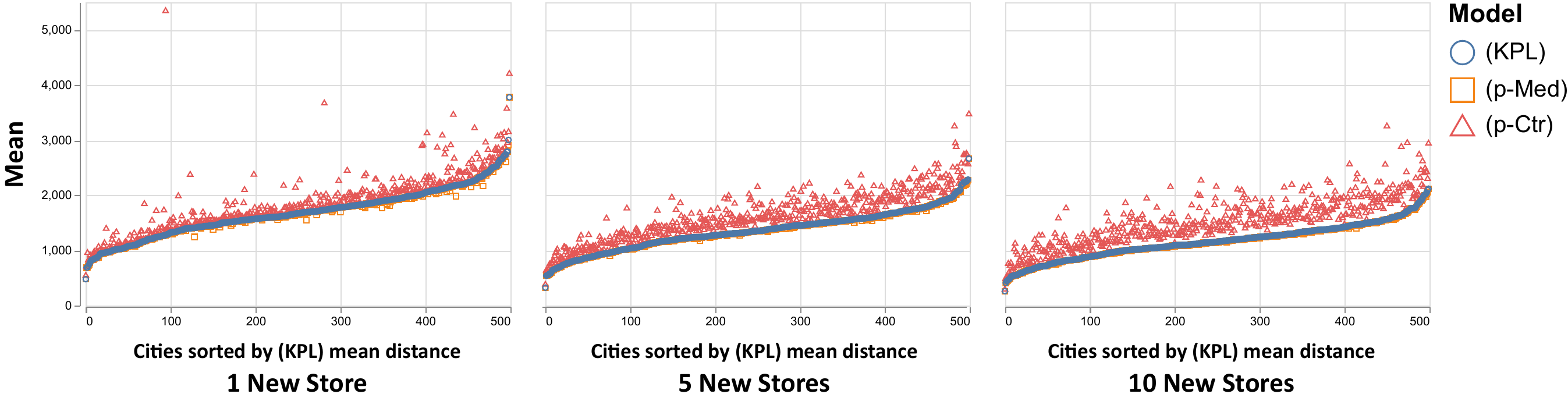}
\caption{Average walking distances for residents to nearest supermarket in optimal solutions of (p-Med), (p-Ctr), and (KPL) models for 500 largest U.S. cities.}
\label{fig:compare_mean_food}
\end{subfigure}

\begin{subfigure}[b]{\textwidth}
\includegraphics[width=\textwidth]{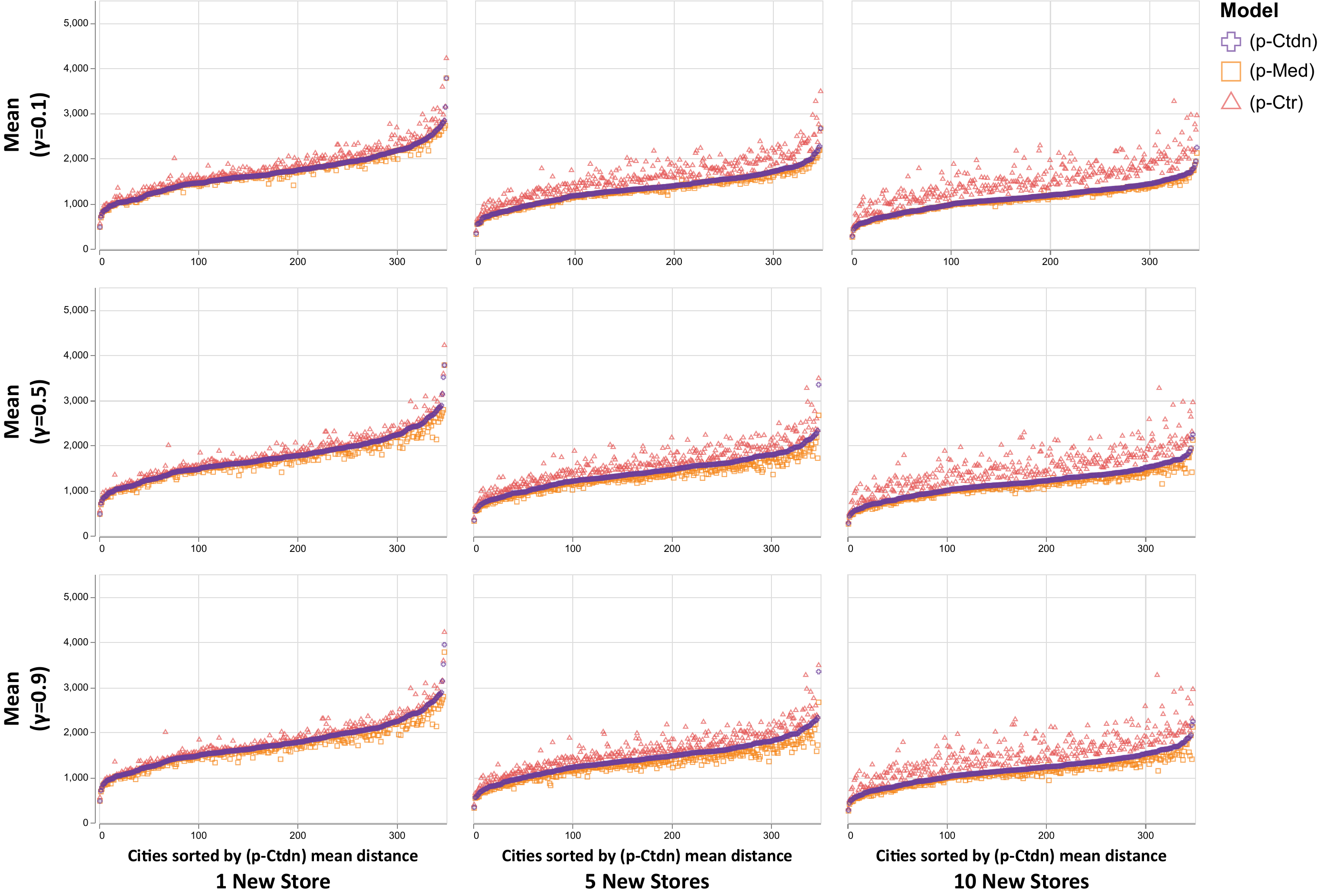}
\caption{Average walking distances for residents to nearest supermarket in optimal solutions of (p-Med), (p-Ctr), and (p-Ctdn) models for 350 largest U.S. cities.} 
\label{fig:pct_compare_mean_food}
\end{subfigure}
\caption{}
\label{}
\end{figure}

Figures \ref{fig:compare_max_food} and \ref{fig:pct_compare_max_food} are similar to Figures \ref{fig:compare_mean_food} and \ref{fig:pct_compare_mean_food} except that they present the maximum (rather than mean) distance traveled in each optimal distribution of distances. The horizontal axis is sorted according to the maximum distance in an optimal solution to (KPL) (resp. (p-Ctdn)). 

As expected, we observe that (p-Med) consistently yields significantly higher maximum distances compared to (p-Ctr), (KPL) and p-(Ctdn). While (KPL) optimal solutions correspond to greater maximum distances when compared to (p-Ctr), they still result in a substantial improvement over the maximum distances in (p-Med) optimal solutions.  For $\gamma= 0.1$, (p-Ctdn) demonstrates a similar balance, while still remaining closer to the (p-Ctr) solutions.  For larger values of $\gamma$, (p-Ctdn) aligns extremely closely with (p-Ctr).

\begin{figure}[hbt!]
\centering
\begin{subfigure}[b]{\textwidth}
\includegraphics[width=\textwidth]{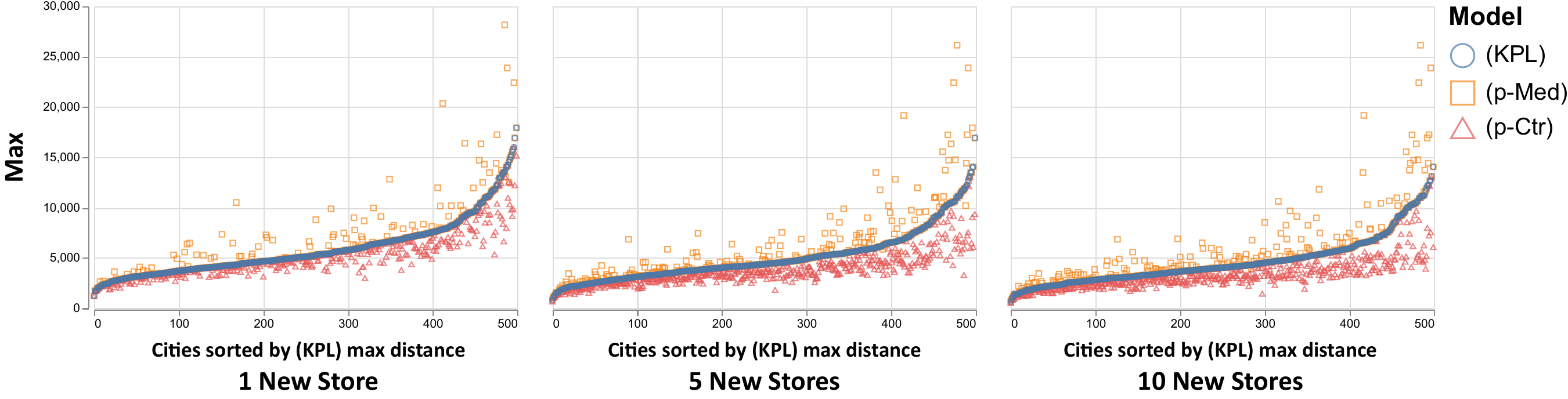}
\caption{Maximum walking distances for any resident to nearest supermarket in optimal solutions of (p-Med), (p-Ctr), and (KPL) models for 499 largest U.S. cities. Note: one outlier with a large (p-Med) maximum value was removed from the $k=1$ store chart to improve readability.}
\label{fig:compare_max_food}
\end{subfigure}
\begin{subfigure}[b]{\textwidth}
\includegraphics[width=\textwidth]{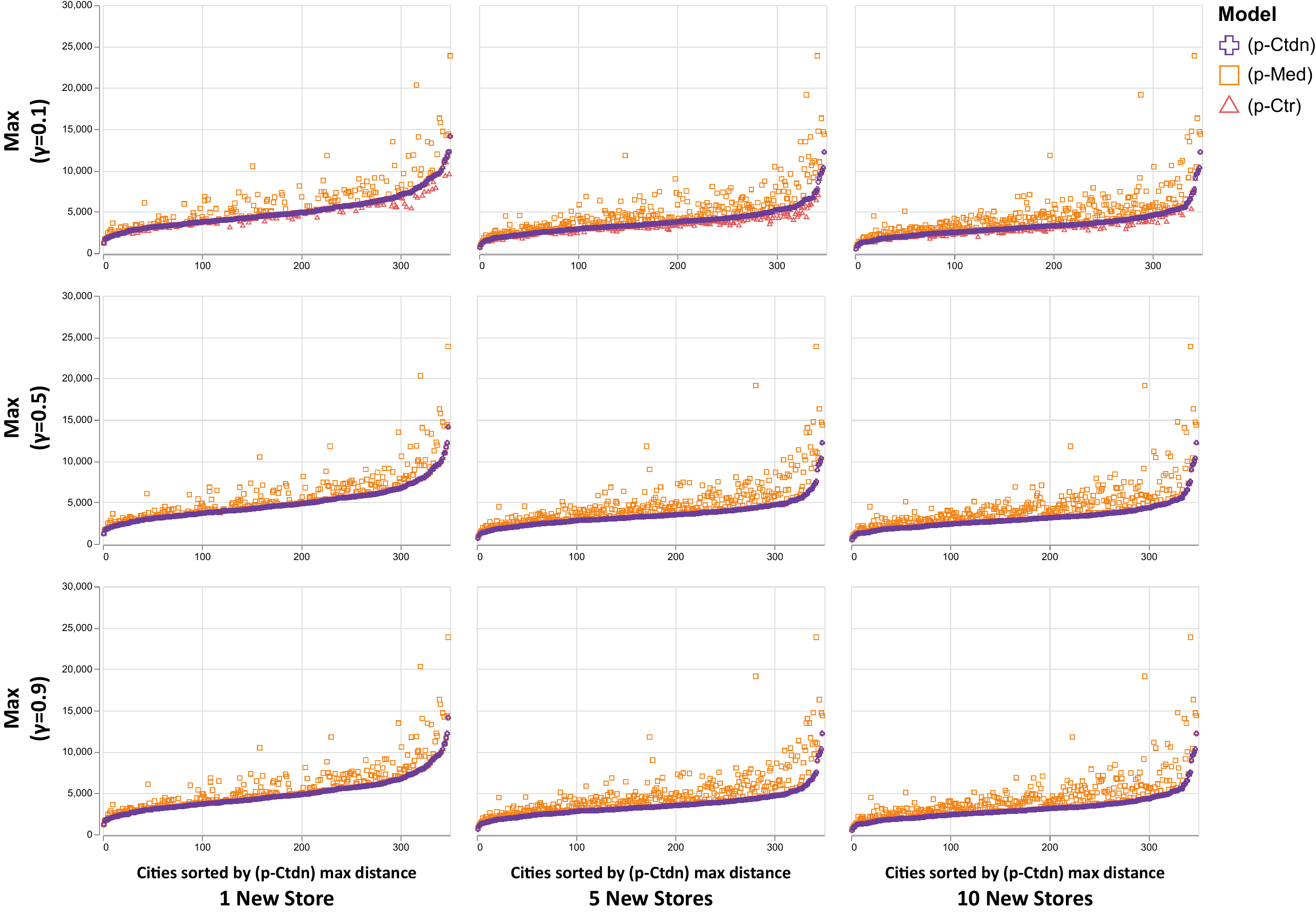}
\caption{Maximum walking distances for any resident to nearest supermarket in optimal solutions of (p-Med), (p-Ctr), and (p-Ctdn) models for 350 U.S. cities.}
\label{fig:pct_compare_max_food}
\end{subfigure}
\caption{}
\label{}
\end{figure}

We wrap up this section with a small table of statistics on the set of cities that were solved for both (p-Ctdn) and (KPL). In particular, we demonstrate the tradeoff between improving the maximum distance traveled and increasing the average travel distance versus the optimal (p-Med) solution. We can see that even with $\gamma = 0.1$, which heavily weights improving the average distance versus the maximum distance, the (p-Ctdn) model is more successful at decreasing the maximum distance, while the (KPL) is more successful at maintaining a near-optimal average experience while addressing equity across the distribution. Of course, the (p-Ctdn) could be tuned to focus more on average distance by choosing a smaller value for $\gamma$. 

\begin{table}[h!]
\setlength{\tabcolsep}{7pt}
\centering
\begin{tabular}{lrrr}
\toprule
& $k=1$  & $k=5$ & $k=10$  \\
\midrule
\multicolumn{4}{l}{\textbf{(KPL) versus (p-Med)}} \\
Average increase in mean distance (meters) & +9.63  & +5.26 & +3.08  \\
Average decrease in maximum distance (meters) & -319.26 & -376.03 & -429.45  \\
\addlinespace
\multicolumn{4}{l}{\textbf{(p-Ctdn) versus (p-Med)}} \\
Average increase in mean distance (meters) & +26.81 & +33.17 & +26.80  \\
Average decrease in maximum distance (meters) & -722.84 & -1187.79 & -1226.48 \\
\bottomrule
\end{tabular}
\caption{Comparing decrease in maximum distance versus increase in mean distance for (KPL) with $\epsilon = -1$ and (p-Ctdn) with $\gamma = 0.1$ relative to (p-Med) optimal solutions. Averages are taken over the 348 food desert cities for which the (p-Ctdn) models solved successfully.}
\label{tab:avgsupermarket}
\end{table}

\subsubsection{Differences between (KPL) and (p-Ctdn)}

We have seen that optimal solutions to both (KPL) and (p-Ctdn) can be interpreted as navigating the trade-off between efficiency and fairness by balancing the mean (p-Med) and maximum (p-Ctr) objectives.  However, it is important to note that while both metrics can be interpreted in this way, the Kolm-Pollak EDE is able to distinguish between distributions even when p-centdian is not.

Viewed like this, we see that (KPL) may in fact serve as a better proxy to the general Discrete Ordered Median Problem (DOMP) than (p-Ctdn) when the goal is to solve larger instances than can be handled using the general framework. Nuanced solutions that address equity across the entire distribution, such as those attained by specially constructed versions of the DOMP, cannot be achieved by (p-Ctdn) because the measure only takes into account the maximum and mean values of a distribution. Hence, in the context of the DOMP, all distances that are {\it not} the maximum are forced to have equal weight.  This means that in practice, the corresponding facility location problem may often have multiple optimal solutions -- two distributions with equal mean and maximum values cannot be differentiated in (p-Ctdn).  In contrast, the general DOMP is able to appropriately distinguish between distributions with equal mean and maximum values by assigning suitable weights to every distance in the distribution.  Example \ref{ex:KPLvspCtdn} demonstrates that the Kolm-Pollak EDE similarly distinguishes between such distributions in a reasonable way.  Moreover, as we have seen, (KPL) retains the desirable computational properties that allow us to solve instances much larger than if we were to use a true DOMP.

\begin{example}
\label{ex:KPLvspCtdn}
Consider a simple scenario with three residential areas, each with a population of one. We are interested in the value of the Kolm-Pollak EDE and the p-centdian metric for a family of distance distributions, the details of which are provided in Table \ref{tab:KPpCtdnEx}.

\begin{table}[h!]
   \setlength{\tabcolsep}{7pt}
\centering
\begin{tabular}{c |c | c } \toprule
Residential Area & Population  & Distance to Assigned Facility (as a function of $\Delta$)   \\ \midrule
1 & 1 & $0 + \Delta$ \\
2 & 1 & $80 - \Delta$  \\
3 & 1 & $100$ \\
\bottomrule
\end{tabular}
\smallskip

\caption{We consider a family of distance distributions, parameterized by $\Delta \in [0,80]$, for three residential areas. Note that the maximum of each distribution is equal to 100 and does not depend on $\Delta$, moreover, the mean of each distribution is equal to $180/3=60$, and also does not depend on $\Delta$. }
\label{tab:KPpCtdnEx}
\end{table}

The family is constructed such that the mean value and the maximum value do not depend on the parameter $\Delta$. Hence, the value of the p-centdian remains constant as we vary $\Delta$ for any fixed choice of $\gamma$.  However, while the mean and maximum values do not change, the family is constructed so that the equality in the distribution does vary with $\Delta$.  With $\Delta = 0$, or $\Delta = 80$, the distribution has maximum inequality, conversely, when $\Delta = 40$, the distribution is significantly more fair.  

Figure \ref{fig:KPvsPCtdn} displays the value of the p-centdian for $\gamma = 0.1, 0.25, 0.5$ and the Kolm-Pollak EDE for $\epsilon = -0.5, -1, -2$ as $\Delta \in [0,80]$ varies.  As expected,  p-centdian does not distinguish between the quality of the distributions and remains constant, whereas the Kolm-Pollak EDE is able to reflect our preference for distributing the 80 units of distance between residential areas 1 and 2 as equally as possible.

\begin{figure}[hbt!]
\centering
\includegraphics[width=0.7\textwidth]{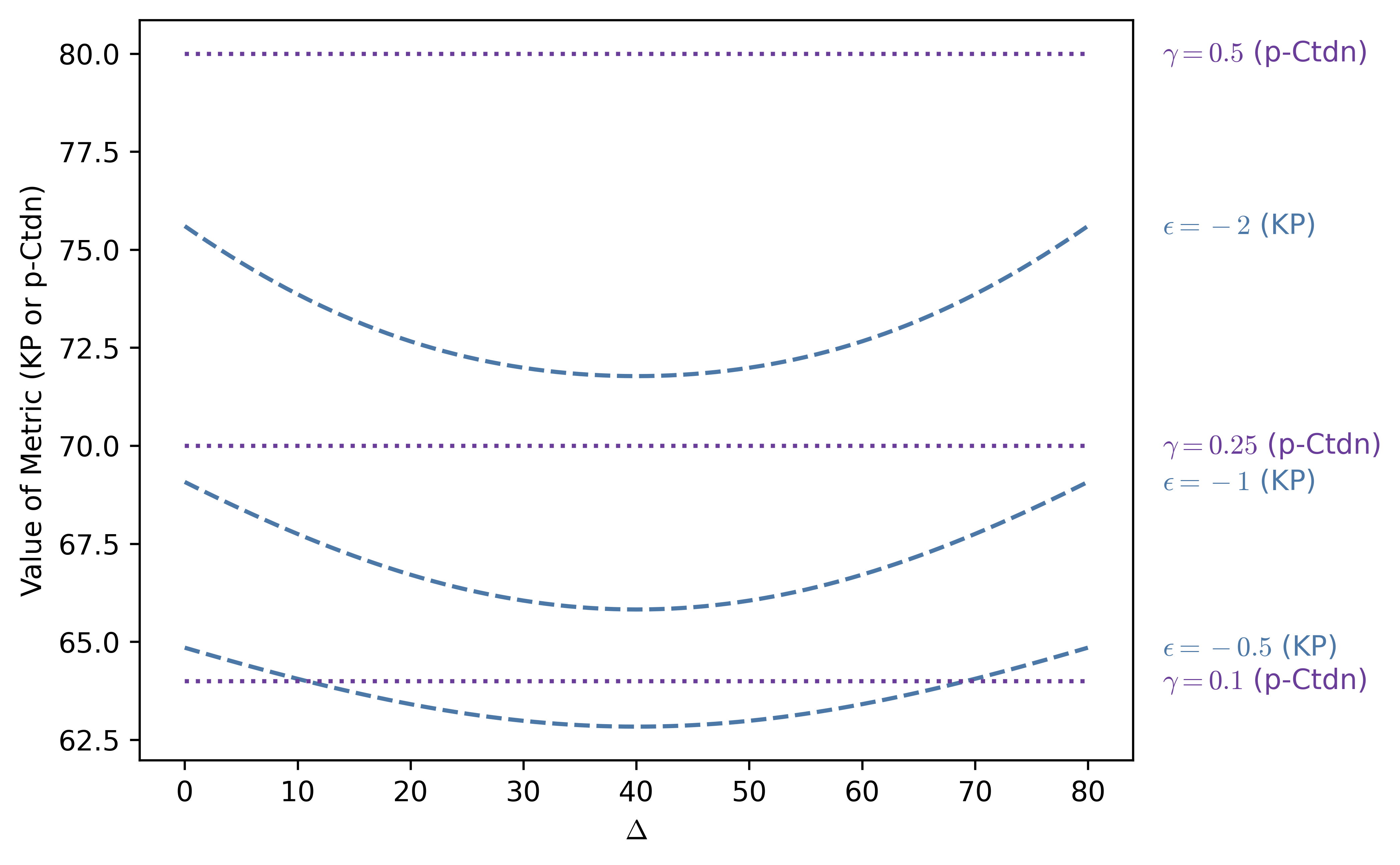}
\caption{Comparing the Kolm-Pollak and p-centdian metrics for the family of distributions as $\Delta$ varies from 0 to 80.  The Kolm-Pollak (blue, dashed) is always able to account for the difference in equality as $\Delta$ varies, whereas the p-centdian (red, dotted) is not, remaining constant for all choices of $\gamma$. }
\label{fig:KPvsPCtdn}
\end{figure}

\end{example}

\section{Discussion and Conclusions}\label{conclusion}
\label{Conclusion}

This research addresses a fundamental challenge in transportation facility location: how to optimize the Kolm-Pollak EDE while maintaining computational tractability for real-world networks. We have demonstrated that our approach not only scales to very large problems but achieves this without sacrificing solution quality. The linear proxy formulation (KPL) performs comparably to the p-median model computationally while delivering solutions that better serve both average and worst-case users. Similar to DOMP models, which do not yet scale computationally, (KPL) delivers nuanced optimal solutions, equitably distributing travel burdens within the interior of a distribution of distances, while simultaneously achieving near-optimal average welfare.

Our computational experiments revealed three methodological advances:
\begin{enumerate}
    \item \textit{The approach is computationally efficient}: The (KPL) model solved instances with over 200 million binary variables, demonstrating comparable performance to the p-median model and significantly outperforming the p-center and p-centdian approaches. This represents a significant improvement over previous equity-based approaches that became intractable for realistic problem sizes.
    \item \textit{Equality can be improved without compromising the average travel time}: As shown in Section \ref{Solution Comparison}, the model balances efficiency and equity. On average across all cities, sacrificing less than 10 meters in mean travel distance yielded improvements of over 400 meters in maximum distance. This demonstrates that incorporating equity considerations need not compromise overall system efficiency.
    \item \textit{Potential locations can be preferred or penalized}: The penalty framework we developed enables planners to incorporate location preferences and tradeoffs while maintaining the model's computational advantages. This allows for nuanced consideration of factors beyond pure distance metrics.
\end{enumerate}

The ability to efficiently optimize the Kolm-Pollak EDE presents new possibilities for incorporating equity in transportation facility location decisions. Our case studies demonstrated applications to both public services (polling locations) and private facilities (supermarkets), but the framework extends naturally to many transportation contexts:
\begin{itemize}
    \item \textit{Transit Stop Location}: Optimizing the placement of bus stops or subway stations to balance coverage with travel time equity
    \item \textit{Emergency Services}: Locating ambulance bases or fire stations to ensure both quick average response times and acceptable worst-case scenarios
    \item \textit{Freight Networks}: Designing distribution center networks that consider both efficiency and fair service levels across regions
    \item \textit{Infrastructure Investment}: Prioritizing transportation infrastructure improvements to address both system-wide performance and localized inequities
\end{itemize}

The penalty framework proves particularly valuable for transportation applications by enabling explicit consideration of cost-distance tradeoffs for facility siting; risk exposure from natural hazards or other disruptions; environmental impact considerations and; land use compatibility and community preferences.

For example, in disaster recovery and resilience planning, transportation networks and facility locations and recovery play a critical role \citep{Anderson2022-vr, Svirsko2022-ns, Logan2020-refresilience}. The Kolm-Pollak optimization approach could enhance such recovery planning by explicitly incorporating equity considerations into the reopening sequence while maintaining computational tractability. The penalty framework could be especially useful in this context, allowing planners to incorporate both repair costs and flood risk exposure when prioritizing facility restoration. The penalty framework could also support preparedness and hazard resilience efforts, as planners can consider these same aspects in initial facility location decisions. 

The significance of this work extends beyond transportation facility location. The Kolm-Pollak EDE has emerged as the preferred metric for evaluating distributions of both amenities and burdens across various domains. Recent applications include network restoration strategies \citep{10014676}, urban heat exposure \citep{Hsuetal2021}, emissions \citep{MansurSheriff2021}, and even microbial exposure in urban environments \citep{MicrobialExposure}. Our optimization approach could enhance these applications by moving from descriptive analysis to prescriptive decision support.
The penalty framework we developed is particularly promising for multi-criteria decision making in transportation and wider planning applications. 

By providing a computationally tractable approach to optimizing the Kolm-Pollak EDE, this research establishes a foundation for incorporating equity considerations more systematically in transportation facility location decisions. The demonstrated scalability to real-world problems, combined with the flexibility of the penalty framework, makes this approach immediately applicable to pressing transportation planning challenges while opening new avenues for research in equitable system design.

\section{Acknowledgements}
{\small This work used computing resources at the Center for Computational Mathematics, University of Colorado Denver, including the Alderaan cluster, supported by the National Science Foundation award OAC-2019089.}

\singlespacing
\renewcommand*{\bibfont}{\small}
\bibliographystyle{elsarticle-harv}
\bibliography{equitablefacilitylocation}

\appendix

\section{Theorem and Proposition Proofs}
\label{sec:app}


\newtheorem*{prop:equivalent}{Proposition \ref{prop:equivalent}}

\begin{prop:equivalent} Suppose $y_{r,s} \in \{0,1\}$ for all $s \in S$, $r\in R$, and  $\sum_{s\in S} y_{r,s} 
= 1$ for all $r \in R$. Then 
\begin{equation*}
    \sum_{r \in R}p_re^{-\kappa \sum_{s \in S}y_{r,s}d_{r,s}}\\
    ~=~\sum_{r \in R}\sum_{s \in S}p_ry_{r,s}e^{-\kappa d_{r,s}}.
\end{equation*}
\end{prop:equivalent}

\begin{proof}
Fix $r \in R$. There is exactly one $s\in S$ such that $y_{r,s}=1$ because of our assumption that $\sum_{s\in S} y_{r,s} = 1$. Let $s'\in S$ be that index so that we have $y_{r,s'}=1$ and $y_{r,s}=0$ for all $s\neq s'$.
\begingroup
\allowdisplaybreaks
Then,
\begin{align*}
    p_re^{-\kappa \sum_{s \in S}y_{r,s}d_{r,s}}&=p_re^{-\kappa(0 d_{r,1}+\dots+ 1 d_{r,s'}+\dots+0 d_{r,\abs{S}})}\\
    &=p_re^{-\kappa d_{r,s'}}\\
&=0 p_r e^{-\kappa d_{r,1}}+\dots+ 1 p_r e^{-\kappa d_{r,s'}}+\dots+0 p_r e^{-\kappa d_{r,\abs{S}}}\\
    &=\sum_{s \in S}p_r y_{r,s} e^{-\kappa d_{r,s}}.
\end{align*}
Since $r \in R$ was chosen arbitrarily, it follows that the desired equality holds. 
\endgroup
\end{proof}


\newtheorem*{thm:penalty}{Theorem \ref{thm:penalty}}

\begin{thm:penalty} 
Let $\overline{\mathcal{K}}$ represent the unpenalized objective value of (KPL). Let $\mathcal{K}$ represent the associated unpenalized Kolm-Pollak score: $\mathcal{K} = -\frac{1}{\kappa}\ln\left(\frac{1}{T}\overline{\mathcal{K}}\right)$. Then adding a penalty of,
\begin{equation*}
\rho ~:=~ Te^{-\kappa \mathcal{K}}(e^{-\kappa \sigma} -1),
\end{equation*}
to $\overline{\mathcal{K}}$ is equivalent to adding a penalty of $\sigma \geq 0$ units to $\mathcal{K}$.
\end{thm:penalty}
\begin{proof}
Note that
$\overline{\mathcal{K}} = Te^{-\kappa \mathcal{K}}$.
Converting the penalized linear objective function value to a Kolm-Pollak score, we obtain the appropriately penalized Kolm-Pollak value:

\hspace*{18mm} $\displaystyle
-\frac{1}{\kappa}\ln\left(\frac{1}{T}\left(\overline{\mathcal{K}} + \rho\right)\right)~ 
~=~ -\frac{1}{\kappa}\ln\left(e^{-\kappa \mathcal{K}} + e^{-\kappa \mathcal{K}}(e^{-\kappa \sigma} -1)\right)
~=~ \mathcal{K} + \sigma.$ \end{proof} 


\newtheorem*{thm:penalty_approx_error}{Theorem \ref{thm:penalty_approx_error}}

\begin{thm:penalty_approx_error} 
The error in the penalty applied to an optimal Kolm-Pollak score by (KPL$^p$) is,
\begin{equation*}
\hat{E}(\Delta,\sigma^*) 
~:=~ \hat{\sigma} - \sigma^* 
~=~ -\frac{1}{\kappa}\ln\left(e^{\kappa\sigma^*}+e^{-\kappa\Delta}(1-e^{\kappa\sigma^*})\right).
\end{equation*}
\end{thm:penalty_approx_error}

\begin{proof} Suppose $(\mathbf{x}^*, \mathbf{y}^*)$ is optimal to (KPL$^p$). Let $\overline{\mathcal{K}}^* := \overline{\mathcal{K}}(\mathbf{y}^*)$. Then,
\begin{align*}
\mathcal{K}^* + \hat{\sigma}
~&=~ -\frac{1}{\kappa} \ln\left(\frac{1}{T}\left(\overline{\mathcal{K}}^* + Te^{-\kappa \hat{\mathcal{K}}}(e^{-\kappa \sigma^*} - 1)\right)\right) \\
&=~ -\frac{1}{\kappa} \ln\left(\frac{1}{T}\left(Te^{-\kappa \mathcal{K}^*} + Te^{-\kappa \hat{\mathcal{K}}}(e^{-\kappa \sigma^*} - 1)\right)\right) \\
&=~ -\frac{1}{\kappa} \ln\left(e^{-\kappa \mathcal{K}^*} + e^{-\kappa \hat{\mathcal{K}}-\kappa \sigma^*} - e^{-\kappa \hat{\mathcal{K}}}\right) \\
&=~ -\frac{1}{\kappa} \ln\left(e^{-\kappa \hat{\mathcal{K}}-\kappa \sigma^*}(1 + e^{-\kappa \mathcal{K}^* + \kappa \hat{\mathcal{K}} + \kappa \sigma^*} - e^{\kappa \sigma^*})\right) \\
&=~ \hat{\mathcal{K}} + \sigma^* - \frac{1}{\kappa}\ln\left(1-e^{\kappa \sigma^*} + e^{\kappa \sigma^*}e^{\kappa(\hat{\mathcal{K}}-\mathcal{K}^*)}\right),
\end{align*}
so that,
\begin{align*}
\hat{\sigma} - \sigma^*
~&=~ \hat{\mathcal{K}} - \mathcal{K}^* - \frac{1}{\kappa}\ln\left(1-e^{\kappa \sigma^*} + e^{\kappa \sigma^*}e^{\kappa(\hat{\mathcal{K}}-\mathcal{K}^*)}\right) \\
&=~ \Delta - \frac{1}{\kappa}\ln\left(1-e^{\kappa \sigma^*} + e^{\kappa \sigma^*}e^{\kappa\Delta}\right) \\
&=~ \Delta - \frac{1}{\kappa}\ln\left(e^{\kappa\Delta}(e^{-\kappa\Delta}(1-e^{\kappa\sigma^*}) + e^{\kappa\sigma^*})\right) \\
&=~ -\frac{1}{\kappa}\ln\left(e^{\kappa\sigma^*}+e^{-\kappa\Delta}(1-e^{\kappa\sigma^*})\right).
\end{align*}
\end{proof}


\newtheorem*{thm:penalty_approx_error_bound}{Theorem \ref{thm:penalty_approx_error_bound}}

\begin{thm:penalty_approx_error_bound} 
Suppose $\sigma^* > 0$ is fixed:
\begin{enumerate}[label=(\arabic*)]
\item $\hat{E}(0,\sigma^*) = 0$ and $|\hat{E}(\Delta,\sigma^*)|$ is increasing in $|\Delta|$.
\item If $\hat{\mathcal{K}} > \mathcal{K}^*$ then $\hat{\sigma} > \sigma^*$ (locations are over-penalized) and 
\[
0
< |\Delta|(1-e^{\kappa \sigma^*}) 
< |\hat{E}(\Delta,\sigma^*)|
< |\Delta|.
\]
\item If $\hat{\mathcal{K}} < \mathcal{K}^*$ then $\hat{\sigma} < \sigma^*$ (locations are under-penalized) and 
\[
0
< |\hat{E}(\Delta,\sigma^*)|
< \text{min}\left\{|\Delta|(1-e^{\kappa \sigma^*}),\sigma^*\right\}.
\]
\end{enumerate}
Suppose $\Delta \in \mathbb{R}$ is fixed:
\begin{enumerate}[resume*]
\item $\hat{E}(\Delta, 0) = 0$ and $|\hat{E}(\Delta, \sigma^*)|$ is increasing in $\sigma^*$.
\end{enumerate}
\end{thm:penalty_approx_error_bound}

\begin{proof}
(1) Suppose $\sigma^* > 0$ is fixed. We have 
\[
\frac{\delta}{\delta \Delta} \hat{E}(\Delta,\sigma^*) 
~=~ \frac{e^{-\kappa\Delta}(1-e^{\kappa\sigma^*})}{e^{-\kappa\Delta}(1-e^{\kappa\sigma^*}) + e^{\kappa\sigma^*}}
~=~ \frac{e^{-\kappa\Delta}}{e^{-\kappa\Delta} + \frac{1}{e^{-\kappa\sigma^*}-1}}.
\]
Recalling that $\kappa < 0$, we have that $e^{-\kappa\sigma^*} > 1$, so that $\frac{\delta}{\delta \Delta} \hat{E}(\Delta,\sigma^*) \in (0,1)$. Thus, $\hat{E}(\Delta,\sigma^*)$ is increasing (though not quickly) in $\Delta$. It is straightforward to evaluate $\hat{E}(0,\sigma^*)=0$. Thus, when $\Delta>0$, $\hat{E}(\Delta,\sigma^*)>0$ and $\hat{E}(\Delta,\sigma^*)=|\hat{E}(\Delta,\sigma^*)|$ is increasing in $|\Delta|$. Similarly, when $\Delta<0$, $\hat{E}(0,\sigma^*)<0$ and $-\hat{E}(\Delta,\sigma^*) = |\hat{E}(\Delta,\sigma^*)|$ is increasing in $|\Delta|$.

(2) Suppose $\sigma^*>0$ is fixed. From above, $\hat{E}(\Delta,\sigma^*) > 0$ when $\Delta>0$. The tangent line to the univariate function $\hat{E}(\Delta,\sigma^*)$ at $\Delta = 0$ is $\ell(\Delta)=\Delta(1 - e^{\kappa\sigma^*})$. Recalling again that $\kappa < 0$,
\[
\frac{\delta^2}{\delta \Delta^2} \hat{E}(\Delta,\sigma^*) 
~=~ \frac{-\kappa e^{-\kappa\Delta}}{(e^{-\kappa \sigma^*}-1)\left(e^{-\kappa \Delta}-\frac{1}{e^{-\kappa \sigma^*}-1}\right)^2} 
~>~ 0,
\]
for all $\Delta \in \mathbb{R}$. Thus, $\Delta(1-e^{\kappa \sigma^*}) \leq \hat{E}(\Delta,\sigma^*)$ for all $\Delta \in \mathbb{R}$, and, in particular, when $\Delta>0$, $|\Delta|(1-e^{\kappa\sigma^*})=\Delta(1-e^{\kappa\sigma^*})<\hat{E}(\Delta,\sigma^*)=|\hat{E}(\Delta,\sigma^*)|$. 
From the proof of Theorem \ref{thm:penalty_approx_error}, 
\[
\hat{E}(\Delta,\sigma^*)
~=~ \Delta - \frac{1}{\kappa}\ln\left(1-e^{\kappa \sigma^*} + e^{\kappa \sigma^*}e^{\kappa\Delta}\right) \\
~=~ \Delta - \frac{1}{\kappa}\ln\left(1-e^{\kappa \sigma^*}(1 - e^{\kappa\Delta})\right).
\]
In this case, $\kappa\sigma^* < 0$ and $\kappa \Delta < 0$, so $e^{\kappa \sigma^*}(1 - e^{\kappa\Delta}) \in (0,1)$. Thus, $|\hat{E}(\Delta,\sigma^*)| = \hat{E}(\Delta,\sigma^*) < \Delta = |\Delta|$.

(3) For $\Delta < 0$, we have that $\hat{E}(\Delta, \sigma^*) < 0$, so $\Delta(1-e^{\kappa \sigma^*}) \leq \hat{E}(\Delta,\sigma^*)$ implies that $|\hat{E}(\Delta, \sigma^*)| < |\Delta|(1 - e^{\kappa\sigma^*})$. Because $\hat{E}(\Delta,\sigma^*)$ is increasing in $\Delta$ and $\lim_{\Delta \rightarrow -\infty} \hat{E}(\Delta, \sigma^*) = -\sigma^*$, $|\hat{E}(\Delta,\sigma^*)| < \sigma^*$.

(4) Now suppose that $\Delta \in \mathbb{R}$ is fixed. We can evaluate $\hat{E}(\Delta,0)=0$. To show that $|\hat{E}(\Delta,\sigma^*)|$ is increasing in $\sigma^*$, evaluate 
\[
\frac{\delta}{\delta\sigma^*} \hat{E}(\Delta,\sigma^*) 
~=~ \frac{e^{-\kappa\Delta}e^{\kappa\sigma^*} - e^{\kappa\sigma^*}}{e^{-\kappa\Delta} + e^{\kappa\sigma^*} - e^{-\kappa\Delta}e^{\kappa\sigma^*}} 
~=~ \frac{e^{\kappa\sigma^*}}{\frac{1}{1-e^{\kappa\Delta}} - e^{\kappa\sigma^*}}.
\]
Recalling that $\kappa < 0$, we have $e^{\kappa\sigma^*} \in (0,1)$. Suppose $\Delta > 0$. Then $\kappa\Delta < 0$, which means $\frac{1}{1-e^{\kappa\Delta}} > 1$. It follows that $\frac{\delta}{\delta\sigma^*} \hat{E}(\Delta,\sigma^*) > 0$. From above, we know that $\hat{E}(\Delta, \sigma^*)>0$ in this case, so $|\hat{E}(\Delta, \sigma^*)| ~=~ \hat{E}(\Delta,\sigma^*)$ is increasing in $\sigma^*$. Suppose $\Delta < 0$. In this case, $\kappa\Delta > 0$, so $\frac{1}{1-e^{\kappa\Delta}} < 0$, which means $\frac{\delta}{\delta\sigma^*} \hat{E}(\Delta,\sigma^*) < 0$ and $\hat{E}(\Delta,\sigma^*)$ is decreasing in $\sigma^*$. However, when $\Delta < 0$, $\hat{E}(\Delta,\sigma^*) < 0$, so $|\hat{E}(\Delta, \sigma^*)|$ is increasing in $\sigma^*$.

\end{proof}


\newtheorem*{prop:Khats}{Proposition \ref{prop:Khats}}

\begin{prop:Khats} 
Assuming the same value of the parameter $\alpha$ is used in (KPL)$^{all}$, (KPL)$^{rem}$, and (KPL$^p$), 
\[ \mathcal{K}^{all} ~\leq~ \mathcal{K}^* ~\leq~ \mathcal{K}^{rem}.
\]
\end{prop:Khats}
\begin{proof}
The optimal solution to (KPL$^p$) is feasible to (KPL)$^{all}$. Thus, $\mathcal{K}^{all} \leq \mathcal{K}^*$.
The optimal solution to (KPL)$^{rem}$ is feasible to (KPL$^p$) and corresponds to the same objective function value in both (KPL)$^{rem}$ and (KPL$^p$) because none of the locations are penalized in either model. Thus, $\mathcal{K}^* \leq \mathcal{K}^{rem}$.
\end{proof}


\newtheorem*{thm:more_bounds}{Theorem \ref{thm:more_bounds}}

\begin{thm:more_bounds} 
Suppose $\hat{\mathcal{K}} = \mathcal{K}^{all}$ in (KPL$^p$). Then undesirable locations are under-penalized, and
\[
\sigma^* - \hat{\sigma}
~\leq~ |\Delta|(1-e^{\kappa \sigma^*}) 
~\leq~ \sigma^{all}(1-e^{\kappa \sigma^*}) 
~\leq~ \sigma^{all}(1-e^{\kappa \sigma^{all}}).
\]
\end{thm:more_bounds}

\begin{proof}
In this case, $\Delta \leq 0$ and the first inequality follows from Theorem \ref{thm:penalty_approx_error_bound} and Proposition \ref{prop:Khats}. The second and third inequalities require demonstrating that $|\Delta| \leq \sigma^{all}$ and $\sigma^* \leq \sigma^{all}$, respectively.

$(\mathbf{x}^{all}, \mathbf{y}^{all})$ is feasible to (KPL$^p$). If $(\mathbf{x}^{all}, \mathbf{y}^{all})$ is optimal to (KPL$^p$), then $\Delta = 0$ and there is no error in the penalty approximation: $\hat{E}(0,\sigma^*) = 0$. In other words, $Te^{-\kappa(\mathcal{K}^{all}+\sigma^{all})}$ represents an upper bound on the optimal objective value of (KPL$^p$):
\[
Te^{-\kappa(\mathcal{K}^* +\hat{\sigma})} ~\leq~ Te^{-\kappa(\mathcal{K}^{all}+\sigma^{all})}.
\]
Thus,
\[
\mathcal{K}^* + \sigma^* + \hat{E}(\Delta, \sigma^*) ~=~ \mathcal{K}^* + \hat{\sigma} ~\leq~ \mathcal{K}^{all} + \sigma^{all}.
\]
Rearranging, we have $|\Delta| = \mathcal{K}^* - \mathcal{K}^{all} \leq \sigma^{all} - \hat{\sigma}$. By Theorem \ref{thm:penalty_approx_error_bound}, $\hat{\sigma} \geq 0$, so we have $|\Delta| \leq \sigma^{all}$.

Next, using the facts that $\Delta \leq 0$, $\kappa < 0$, and $\sigma^* \geq 0$, we have,
\[
\hat{E}(\Delta,\sigma^*) ~=~ \Delta - \frac{1}{\kappa}\ln(1-e^{\kappa\sigma^*}(1-e^{\kappa\Delta})) ~\geq~ \Delta.
\]
Finally,
\begin{align*}
\mathcal{K}^{all} + \sigma^*
&~=~ \mathcal{K}^* + \sigma^* + \mathcal{K}^{all} - \mathcal{K}^* \\
&~=~ \mathcal{K}^* + \sigma^* + \Delta \\
&~\leq~ \mathcal{K}^* + \sigma^* + \hat{E}(\Delta,\sigma^*) \\
&~\leq~ \mathcal{K}^{all} + \sigma^{all},
\end{align*}
which means that $\sigma^* \leq \sigma^{all}$, as required.
\end{proof}


\newtheorem*{prop:no_exp_error}{Proposition \ref{prop:no_exp_error}}

\begin{prop:no_exp_error}
If $c_s = c$ for all $s \in U$, and $\beta_i := -\kappa c i$, for $i = 0, 1, \dots, \min\{k,|U|\}$, then $\ddot{\sigma} = \hat{\sigma}$.
\end{prop:no_exp_error}
\begin{proof}
Due to constraints \eqref{xbinary}, for all feasible $\mathbf{x} :=(x_1, x_2, \dots, x_{|S|})$, $q = -\kappa\sum_{s \in U} c x_s$ can only take the values of the selected linearization points. At these values of $q$, the approximating tangent lines match the exponential expression, $e^q$, exactly.
\end{proof}


\newtheorem*{thm:max_ex_error}{Proposition \ref{thm:max_ex_error}}

\begin{thm:max_ex_error} 
For $b\in \mathbb{R}$, let $L_b := e^b + e^b(x-b)$ represent the tangent line to $f(x) = e^x$ at the point $(b,e^b)$. Let $g: [a,a+w] \rightarrow \mathbb{R}$ be the piecewise linear function $g(x) := \max\{L_a(x), L_{a+w}(x)\}$. The maximum error between $e^x$ and $g(x)$ on $[a,a+w]$ is, 
\[
\max_{x \in [a,a+w]} e^x - g(x) = e^a\left(e^{\frac{we^w}{e^w-1}-1} - \frac{we^w}{e^w-1}\right).
\]
\end{thm:max_ex_error}
\begin{proof}
The maximum error occurs at the intersection of lines $L_a$ and $L_{a+w}$; i.e., at $x =  a+\frac{we^w}{e^w-1}-1$. Some algebra leads to the expression above.
\end{proof}


\newtheorem*{thm:max_practical_exponent}{Proposition \ref{thm:max_practical_exponent}}

\begin{thm:max_practical_exponent} 
Let $D = \{d_r: r \in R\}$ be the distribution of distances that is used to calculate $\alpha$, and let $\mu_D$ represent the population weighted mean of $D$, $\mu_D := \frac{1}{T}\sum_{r \in R} p_r d_r$.
If $\sigma_{max} \leq \mu_D$, then $q = -\kappa \sum_{s\in U} c_s x_s \leq |\epsilon|$.
\end{thm:max_practical_exponent}
\begin{proof}
\begin{align*}
-\kappa \sum_{s \in U} c_s x_s ~\leq~ -\kappa \sigma^{max} ~\leq~ -\kappa \mu_D 
~=~ -\epsilon \alpha \mu_D 
~=~ -\epsilon \frac{(\sum_{r \in R} p_r d_r)^2}{T\sum_{r \in R} p_r d_r^2} 
~\leq~ |\epsilon|,
\end{align*}
where the final inequality is due to the Cauchy-Schwarz inequality.
\end{proof}


\newtheorem*{thm:penalty_error_tangent}{Theorem \ref{thm:penalty_error_tangent}}

\begin{thm:penalty_error_tangent} 
For  $w \in (0,1)$, suppose $\beta_i = iw$ for $i = 0, 1, \dots, n$, where  $nw > -\kappa \sigma^{max}$. Then $\ddot{\sigma} \leq \hat{\sigma}$ and
\begin{align}
|\ddot{E}(\boldsymbol{\beta},\sigma^*)| 
~:=~ \hat{\sigma} - \ddot{\sigma}
&~\leq~ 
\frac{1}{\kappa}\ln\left(1 - \frac{A(w)}{1 + e^{\kappa\sigma^*}(e^{\kappa \Delta} - 1)}\right) \tag{\ref{eq:exact_tangent_bound}} \\
&~\leq~
\begin{cases}
\frac{1}{\kappa}\ln\left(1-A(w)\right), & \text{ if } \Delta \leq 0;  \\
\frac{1}{\kappa}\ln\left(1-1.25A(w)\right),
& \text{ if } \Delta > 0 \text{ and } \hat{\mathcal{K}} = \mathcal{K}^{rem},
\end{cases} \tag{\ref{eq:tangent_bound_cases}}
\end{align}
where $A(w) := e^{\frac{we^w}{e^w-1}-1} - \frac{we^w}{e^w-1}$.
\end{thm:penalty_error_tangent}

\begin{proof}
We begin by deriving an expression for $\hat{\sigma}$ and a bound for $\ddot{\sigma}$. First,
\begin{align*}
\mathcal{K}^* + \hat{\sigma} 
&~=~ -\frac{1}{\kappa}\ln\left[\frac{1}{T}\left(\bar{\mathcal{K}}^* + Te^{-\kappa\hat{\mathcal{K}}}(e^{-\kappa\sigma^*} -1)\right)\right] \\
&~=~ -\frac{1}{\kappa}\ln\left(e^{-\kappa\mathcal{K}^*} + e^{-\kappa\hat{\mathcal{K}}}(e^{-\kappa\sigma^*} -1)\right), \\
&~=~ \mathcal{K}^* - \frac{1}{\kappa}\ln\left(1 + e^{-\kappa \Delta}(e^{-\kappa \sigma^*} - 1)\right),
\end{align*}
so that $\hat{\sigma} = - \frac{1}{\kappa}\ln\left(e^{-\kappa(\Delta+\sigma^*)} + 1 - e^{-\kappa \Delta}\right)$. Let $err(q^*, w)$ represent the error between $e^{q^*}$, $q^*=-\kappa \sigma^*$, and the lower bound provided by constraints \eqref{cons:tangent_lines}. We have,
\begin{align*}
\mathcal{K}^* + \ddot{\sigma} 
&~=~ -\frac{1}{\kappa}\ln\left[\frac{1}{T}\left(\bar{\mathcal{K}}^* + Te^{-\kappa\hat{\mathcal{K}}}(e^{-\kappa\sigma^*} - err(q^*,w) -1)\right)\right] \\
&~=~ \mathcal{K}^* - \frac{1}{\kappa}\ln\left(1 + e^{-\kappa \Delta}(e^{-\kappa \sigma^*} - err(q^*,w) - 1)\right).
\end{align*}
According to Proposition \ref{thm:max_ex_error}, $0 \leq err(q^*,w) \leq e^{\beta_i}A(w) \leq e^{-\kappa\sigma^*}A(w)$, where $q^* \in [\beta_i, \beta_{i+1})$. Note that $A(w)$ is increasing for $w \geq 0$, $A(0) = 0$, and $A(1) \approx 0.208$. Thus,
\begin{align*}
\ddot{\sigma} 
&~=~ -\frac{1}{\kappa}\ln\left(e^{-\kappa(\Delta+\sigma^*)} + 1 - e^{-\kappa\Delta} - e^{-\kappa\Delta}err(q^*,w)\right) \\
&~\geq~ -\frac{1}{\kappa}\ln\left(e^{-\kappa(\Delta+\sigma^*)} + 1 - e^{-\kappa\Delta} - e^{-\kappa(\Delta+\sigma^*)}A(w)\right)  \\
&~\geq~ 0.
\end{align*}
We have $\ddot{\sigma} \leq \hat{\sigma}$ because $err(q^*,w) \geq 0$, and for Equation \eqref{eq:exact_tangent_bound},
\begin{align*}
\hat{\sigma} - \ddot{\sigma}
&~\leq~ - \frac{1}{\kappa}\ln\left(e^{-\kappa(\Delta+\sigma^*)} + 1 - e^{-\kappa \sigma^*}\right) + \frac{1}{\kappa}\ln\left(e^{-\kappa(\Delta+\sigma^*)} + 1 - e^{-\kappa\Delta} - e^{-\kappa(\Delta+\sigma^*)}A(w)\right) \\
&~=~ \frac{1}{\kappa}\ln\left(\frac{e^{-\kappa(\Delta+\sigma^*)} + 1 - e^{-\kappa\Delta} - e^{-\kappa(\Delta+\sigma^*)}A(w)}{e^{-\kappa(\Delta+\sigma^*)} + 1 - e^{-\kappa \sigma^*}}\right) \\
&~=~ \frac{1}{\kappa}\ln\left(1 - \frac{e^{-\kappa(\Delta+\sigma^*)}A(w)}{e^{-\kappa(\Delta+\sigma^*)} + 1 - e^{-\kappa \sigma^*}}\right) \\
&~=~ \frac{1}{\kappa}\ln\left(1 - \frac{A(w)}{1 + e^{\kappa(\Delta+\sigma^*)} - e^{\kappa \sigma^*}}\right) \\
&~=~ \frac{1}{\kappa}\ln\left(1 - \frac{A(w)}{1 + e^{\kappa\sigma^*}(e^{\kappa \Delta} - 1)}\right).
\end{align*}
For the first part of \eqref{eq:tangent_bound_cases}, if $\Delta \leq 0$, then $1 + e^{\kappa\sigma^*}(e^{\kappa \Delta} - 1) \geq 1$, so that $\hat{\sigma} - \ddot{\sigma} \leq \frac{1}{\kappa}\ln\left(1 - A(w)\right)$. For the second part of \eqref{eq:tangent_bound_cases}, if $\hat{\mathcal{K}} = \mathcal{K}^{rem}$, then $\mathcal{K}^* + \sigma^* \leq \mathcal{K}^* + \hat{\sigma} \leq \mathcal{K}^{rem} + 0$, where the first inequality is due to Theorem \ref{thm:penalty_approx_error_bound} and the second is due to the optimality of (KPL$^p$). Thus, $0 \leq \sigma^* \leq \mathcal{K}^{rem} - \mathcal{K}^* = \Delta$, and we have,
\[
1 + e^{\kappa\sigma^*}(e^{\kappa \Delta} - 1)
~\geq~
1 + e^{\kappa\Delta}(e^{\kappa \Delta} - 1)
~\geq~
0.75.
\]
\end{proof}

\end{document}